\newtheorem{theorem}{Theorem}[section]
\newtheorem{lemma}[theorem]{Lemma}
\newtheorem{corollary}[theorem]{Corollary}
\newtheorem{proposition}[theorem]{Proposition}
\newtheorem{observation}[theorem]{Observation}
\newtheorem{conjecture}[theorem]{Conjecture}
\newtheorem{question}[theorem]{Question}
\theoremstyle{definition}
\newtheorem{definition}[theorem]{Definition}
\newtheorem{example}[theorem]{Example}
\theoremstyle{remark}
\newtheorem{remark}[theorem]{Remark}
\newcommand{\ii}{\mathsf{i}}									            % \textup{i} \coloneqq \sqrt{-1}
\newcommand{\bracket}[1]{\left[#1 \right]}						            % [n]
\newcommand\floor[1]{\left\lfloor#1\right\rfloor}				            % floor function
\DeclareMathOperator{\conv}{conv}
\DeclareMathOperator{\coni}{coni}
\DeclareMathOperator{\diag}{diag}
\DeclareMathOperator{\spec}{spec}
\DeclareMathOperator{\circulant}{circ}
\DeclareMathOperator{\linearspan}{span}
\numberwithin{equation}{section}
\begin{document}

% \title[short text for running head]{full title}
\title[Perron similarities and the NIEP]{Perron similarities and the nonnegative inverse eigenvalue problem}

%    Only \author and \address are required; other information is
%    optional.  Remove any unused author tags.

% %    author one information
% % \author[short version for running head]{name for top of paper}
% \author{}
% \address{}
% \curraddr{}
% \email{}
% \thanks{}

% %    author two information
% \author{}
% \address{}
% \curraddr{}
% \email{}
% \thanks{}

%    \subjclass is required.
\subjclass[2020]{Primary: 15A29; Secondary: 15B48, 15B51}

\date{}

\dedicatory{}

%    Abstract is required.
\begin{abstract}
The longstanding \emph{nonnegative inverse eigenvalue problem} (NIEP) is to determine which multisets of complex numbers occur as the spectrum of an entry-wise nonnegative matrix. Although there are some well-known necessary conditions, a solution to the NIEP is far from known. 

An invertible matrix is called a \emph{Perron similarity} if it diagonalizes an irreducible, nonnegative matrix. Johnson and Paparella [Linear Algebra Appl.,~{\bf 493} (2016), 281--300] developed the theory of real Perron similarities. Here, we fully develop the theory of complex Perron similarities. 

Each Perron similarity gives a nontrivial polyhedral cone and convex polytope of realizable spectra (thought of as vectors in complex Euclidean space). The extremals of these convex sets are finite in number, and their determination for each Perron similarity would solve the diagonalizable NIEP, a major portion of the entire problem. By considering Perron similarities of certain realizing matrices of Type I Karpelevi{\v{c}} arcs, large portions of realizable spectra are generated for a given positive integer. This is demonstrated by producing a nearly complete geometrical representation of the spectra of four-by-four stochastic matrices.

Similar to the Karpelevi{\v{c}} region, it is shown that the subset of complex Euclidean space comprising the spectra of stochastic matrices is compact and star-shaped. \emph{Extremal} elements of the set are defined and shown to be on the boundary.

It is shown that the polyhedral cone and convex polytope of the \emph{discrete Fourier transform (DFT) matrix} corresponds to the conical hull and convex hull of its rows, respectively. Similar results are established for multifold Kronecker products of DFT matrices and multifold Kronecker products of DFT matrices and Walsh matrices. These polytopes are of great significance with respect to the NIEP because they are \emph{extremal} in the region comprising the spectra of stochastic matrices.  

Implications for further inquiry are also given. 
\end{abstract}

\maketitle

%---------------------
\section{Introduction}
%---------------------

The \emph{nonnegative inverse eigenvalue problem (NIEP)} asks, for each positive integer $n$, which multi-sets $\Lambda = \{\lambda_1,\ldots, \lambda_n \}$ of $n$ complex numbers occur as the eigenvalues of an $n$-by-$n$ entry-wise nonnegative matrix? This has proven one of the most challenging problems in mathematics, and is certainly the most sought-after question in matrix analysis. Thus, a variety of sub-questions have been worthy goals. 

If $A \geqslant 0$ (entry-wise) is $n$-by-$n$ and with spectrum $\Lambda$, then $\Lambda$ is called \emph{realizable}, and $A$ is called a \emph{realizing matrix}. If the realizing matrix is required to be diagonalizable, then the resulting subproblem is called the \emph{diagonalizable NIEP} or the \emph{DNIEP}. There are differences between the two problems \cite[p.~214]{jmpp2018} and both are unsolved when $n > 4$. A solution to either has appeared far off. For additional information about the NIEP, and its numerous variants, there is a recent survey \cite{jmpp2018}.

It is known that if $\Lambda = \{ \lambda_1, \dots, \lambda_n \}$ is realizable and $A$ is a realizing matrix for $\Lambda$, then  
\begin{equation}
\rho \left(\Lambda\right) \coloneqq \max_{1 \leqslant k \leqslant n} \left\{ \vert\lambda_k\vert \right\} \in \Lambda,      \label{sprad}		
\end{equation}
\begin{equation}
\Lambda = \overline{\Lambda} \coloneqq \left\{ \overline{\lambda_1},\dots, \overline{\lambda_n} \right\},       \label{selfconj} 	
\end{equation}
\begin{equation} 
s_k (\Lambda) \coloneqq \sum_{i=1}^n \lambda_i^k = \trace{(A^k)} \geqslant 0,~\forall k \in \mathbb{N},               \label{trnn}		
\end{equation}
and
\begin{equation}
\left[ s_k (\Lambda) \right]^\ell \leqslant n^{\ell-1} s_{k\ell} (\Lambda), \forall k, \ell \in \mathbb{N}.           \label{JLL}  
\end{equation}
These conditions are not independent: Loewy and London \cite{ll1978-79} showed that the \emph{moment condition} \eqref{trnn} implies the \emph{self-conjugacy condition} \eqref{selfconj}. Friedland \cite[Theorem 1]{f1978} showed that the \emph{eventual nonnegativity} of the moments implies the spectral radius condition \eqref{sprad}. Finally, if the \emph{trace} is nonnegative, i.e., if \(s_1(\Lambda) \geqslant 0\), then the \emph{JLL condition} \eqref{JLL} (established independently by Johnson \cite{j1981} and by Loewy and London \cite{ll1978-79}) implies the moment condition since 
\[ s_\ell(\Lambda) \geqslant \frac{1}{n^{\ell-1}} \left[ s_1(\Lambda) \right]^\ell \geqslant 0, \forall \ell \in \mathbb{N}. \]
Thus, the JLL condition and the nonnegativity of the trace imply the self-conjugacy, spectral radius, and moment conditions. 

Holtz \cite{h2005} showed that if $\Lambda = \{ \lambda_1, \dots, \lambda_n \}$ is realizable, with $\lambda_1 = \rho\left(\Lambda\right)$, then the shifted spectrum $\{0,\lambda_1 - \lambda_2, \dots, \lambda_1- \lambda_n \}$ satisfies \emph{Newton's inequalities}. Furthermore, Holtz demonstrated that these inequalities are  independent of \eqref{trnn} an \eqref{JLL}.

The problem of characterizing the nonzero spectra of nonnegative matrices is due to Boyle and Handelman \cite{bh1991} (a constructive version of their main result was given by Laffey \cite{l2012}). However, despite their remarkable achievement, and the stringent necessary conditions listed above, the NIEP and its variants are unsolved when $n$ is greater than four.

Our focus here is upon the DNIEP (without loss of generality, when the realizing matrix is irreducible). We are able to explicitly characterize invertible matrices that diagonalize irreducible nonnegative matrices. We call them \emph{Perron similarities}. We show also that, for each Perron similarity, there is a nontrivial polyhedral cone of realizable spectra, which we call the \emph{(Perron) spectracone}. When the Perron similarity is properly normalized, the cross section of the spectracone, for which the spectral radius is one, is called the \emph{(Perron) spectratope} and is a convex polytope. With the mentioned normalization, each matrix may be taken to be row stochastic. This focuses attention, for each Perron similarity, upon the extreme points that are finite in number. Their determination solves a dramatic portion of the NIEP. This program is carried out for particular types of matrices, such as those that correspond to Type I Karpelevi{\v{c}} arcs (see below) and circulant and block circulant matrices.

%-------------------------------
\section{Notation \& Background}
%-------------------------------

For ease of notation, $\mathbb{N}$ denotes the set of positive integers and $\mathbb{N}_0 \coloneqq \mathbb{N} \cup \{ 0 \}$. If $n \in \mathbb{N}$, then $\bracket{n} \coloneqq \{ k \in \mathbb{N} \mid 1 \leqslant k \leqslant n \}$.

The set of $m$-by-$n$ matrices with entries from a field $\mathbb{F}$ is denoted by $\mathsf{M}_{m \times n}(\mathbb{F})$. If $m = n$, then $\mathsf{M}_{n \times n}(\mathbb{F})$ is abbreviated to $\mathsf{M}_n(\mathbb{F})$. 
% The set of all $n$-by-$1$ column vectors is identified with the set of all ordered $n$-tuples with entries in $\mathbb{F}$ and thus denoted by $\mathbb{F}^n$. 
The set of nonsingular matrices in $\mathsf{M}_n(\mathbb{F})$ is denoted by $\mathsf{GL}_{n}\left(\mathbb{F}\right)$.   

If $x \in \mathbb{F}^n$, then $x_k$ or $[x]_k$ denotes the $k\textsuperscript{th}$-entry of $x$ and $D_x = D_{x^\top} \in \mathsf{M}_n$ denotes the diagonal matrix whose $(i,i)$-entry is $x_i$. Notice that 
\[ D_{\alpha x + \beta y} = \alpha D_x + \beta D_y,\ \forall \alpha, \beta \in \mathbb{F}, \forall x, y \in \mathbb{F}^n. \] 

Denote by $I$, $e$, $e_i$, and $0$ the identity matrix, the all-ones vector, the $i\textsuperscript{th}$ canonical basis vector, and the zero vector, respectively. The size of each aforementioned object is implied by its context.

If $A \in \mathsf{M}_{m \times n}(\mathbb{F})$, then:
\begin{itemize}
    \item $a_{ij}$, $a_{i,j}$, or $[A]_{ij}$ denotes the $(i,j)$-entry of $A$;
    \item $A^\top$ denotes the \emph{transpose of $A$};
    \item $\overline{A} = [\overline{a_{ij}}]$ denotes the entrywise conjugate of $A$; 
    \item $A^\ast \coloneqq  \overline{A^\top} = \overline{A}^\top$ denotes the conjugate-transpose of $A$; and 
    \item $r_i(A) \coloneqq A^\top e_i$ denotes the $i\textsuperscript{th}$-row of $A$ as a column vector (when the context is clear, $r_i(A)$ is abbreviated to $r_i$).    
\end{itemize}
If $A \in \mathsf{M}_{n}(\mathbb{F})$, then $\spec A = \spec(A)$ denotes the \emph{spectrum of $A$} and $\rho = \rho(A)$ denotes the \emph{spectral radius of $A$}.

If $A \in \mathsf{M}_{n}(\mathbb{F})$ and $n \geqslant 2$, then $A$ is called \emph{reducible} if there is a permutation matrix $P$ such that
\begin{align*}
P^\top A P =
\begin{bmatrix}
A_{11} & A_{12} \\
0 & A_{22}
\end{bmatrix},
\end{align*}
where $A_{11}$ and $A_{22}$ are nonempty square matrices. If $A$ is not reducible, then A is called \emph{irreducible}. 

If $A \in \mathsf{M}_n(\mathbb{F})$, then the \emph{characteristic polynomial of $A$}, denoted by $\chi_A$, is defined by $\chi_A(t) = \det(tI - A)$. The \emph{companion matrix} $C = C_p$ of a monic polynomial $p(t) = t^n + \sum_{k=1}^{n} c_{k} t^{n - k}$ is the $n$-by-$n$ matrix defined by
\[ C = 
\left[\begin{array}{cc}
0 & I_{n-1} \\
-c_n & -c
\end{array} \right], \]
where $c = [c_{n-1}~\cdots~c_1]$. It is well-known that $\chi_{C_p} = p$. Notice that $C$ is irreducible if and only if $c_n \neq 0$.

% If $x \in \mathbb{C}^n$, then 
% $$\Re x \coloneqq 
% \begin{bmatrix}
%     \Re x_1 \\
%     \vdots \\
%     \Re x_n
% \end{bmatrix} \in \mathbb{R}^n$$
% and 
% $$\Im x \coloneqq 
% \begin{bmatrix}
%     \Im x_1 \\
%     \vdots \\
%     \Im x_n
% \end{bmatrix} \in \mathbb{R}^n.$$ 
% Since $\ii x = -\Im x + \ii \Re x$ and $-\ii x = \Im x - \ii \Re x$, it follows that 
%     \begin{equation}
%         \label{realandimag}
%         \Im x = 0 \iff (\Re(\ii x) 	\geqslant 0) \wedge (\Re(-\ii x) \geqslant 0).    
%     \end{equation}
% Similarly, if $A \in \mathsf{M}_n(\mathbb{C})$, then $\Re A \coloneqq \begin{bmatrix} \Re a_{ij} \end{bmatrix} \in \mathsf{M}_n(\mathbb{R})$ and $\Im A \coloneqq \begin{bmatrix} \Im a_{ij} \end{bmatrix} \in \mathsf{M}_n(\mathbb{R})$.  

If $x,y \in \mathbb{C}^n$, then $\langle x,y \rangle$ denotes the canonical inner product of $x$ and $y$, i.e., 
$$ \langle x,y \rangle = y^\ast x = \sum_{k=1}^n \overline{y_k} \cdot x_k $$ 
and
\[ \begin{Vmatrix} x \end{Vmatrix}_2 \coloneqq \sqrt{\langle x, x \rangle}. \]
% If $x,y \in \mathbb{C}^n$, then 
% \begin{equation}
%     \Re \left( \langle x,y \rangle \right) = \left(\Re x \right)^\top \Re y + \left(\Im x \right)^\top \Im y 
% \end{equation}

If $n \in \mathbb{N}$, then 
$$S^n \coloneqq \left\{ x \in \mathbb{C}^n \mid || x ||_\infty \coloneqq \max_{1 \leqslant k \leqslant n} \{|x_k| \} = 1 \right\}$$ 
and $B^n \coloneqq \{ x \in \mathbb{C}^n \mid  || x ||_\infty \leqslant 1 \}$. 

% For $n \in \mathbb{N}$, $n > 1$, we let $C_n$ denote the \emph{basic circulant}, i.e.,  
% \[ C_n =
% \left[ 
% \begin{array}{cc}
% 0 & I_{n-1} \\
% 1 & 0
% \end{array} \right]. \]
% Note that the digraph of $C_n$ is a cycle of length $n$.

The \emph{Hadamard product} of $A = [a_{ij}]$, $B = [b_{ij}] \in \mathsf{M}_{m \times n}(\mathbb{F})$, denoted by $A \circ B$, is the $m$-by-$n$ matrix whose $(i,j)$-entry is $a_{ij} b_{ij}$. If $x \in \mathbb{F}^n$ and $p\in \mathbb{N}$, then $x^p$ denotes the $p\textsuperscript{th}$-power of $x$ with respect to the Hadamard product, i.e., $[x^p]_k = x_k^p$. If $p=0$, then $x^p \coloneqq e$. If $x \in \mathbb{F}^n$ is \emph{totally nonzero}, i.e., $x_k \ne 0,\ \forall k \in \bracket{n}$, then $x^{-1}$ denotes the inverse of $x$ with respect to the Hadamard product, i.e., $[x^{-1}]_k = x_k^{-1}$. Notice that if $x$ is totally nonzero, then $(D_x)^{-1} = D_{x^{-1}}$.

The \emph{direct sum} of $A_1, \dots, A_\ell$, where $A_k \in \mathsf{M}_{n_k}(\mathbb{F})$, denoted by $A_1 \oplus \dots \oplus A_\ell$, or $\bigoplus_{k=1}^\ell A_k$, is the $n$-by-$n$ matrix 
\[ 
 \left[
 \begin{array}{ccc}
 A_1 &  & \multirow{2}{*}{\Large 0} \\
 \multirow{2}{*}{\Large 0} & \ddots &  \\
  &  & A_\ell
 \end{array}
 \right],\ n = \sum_{k=1}^\ell n_k. 
\]

If $\sigma \in \mathsf{Sym}(n)$ and $x \in \mathbb{C}^n$, then $\sigma(x)$ is the $n$-by-$1$ vector such that $[\sigma(x)]_k = x_{\sigma(k)}$ and $P_\sigma \in \mathsf{M}_n$ denotes the permutation matrix corresponding to $\sigma$, i.e., $P_\sigma$ is the the $n$-by-$n$ matrix whose $(i,j)$-entry is \(\delta_{\sigma(i),j}\), where $\delta_{ij}$ denotes the Kronecker delta. As is well-known, $\left( P_\sigma \right)^{-1} = P_{\sigma^{-1}} = \left(P_\sigma \right)^\top$. When the context is clear, $P_\sigma$ is abbreviated to $P$. Notice that $Px = \sigma(x)$.

If $k \in \bracket{n}$, then $P_k$ denotes the matrix obtained by deleting the $k\textsuperscript{th}$-row of $I_n$ and $\pi_k: \mathbb{F}^n \longrightarrow \mathbb{F}^{n-1}$ is the projection map defined by $\pi_k(x) = P_k x$.

%--------------------------------
\subsection{Nonnegative Matrices}

If $A \in \mathsf{M}_n(\mathbb{R})$ and $a_{ij} \geqslant 0,\ \forall i,j \in \bracket{n}$ or $a_{ij} > 0,\ \forall i,j \in \bracket{n}$, then $A$ is called \emph{nonnegative} or \emph{positive}, respectively, and we write $A \geqslant 0$ or $A > 0$, respectively. If $x \in \mathbb{C}^n$ ($A \in \mathsf{M}_n(\mathbb{C})$), then $x \geqslant 0$ (respectively, $A \geqslant 0$) if $\Re x \geqslant 0$ and $\Im x = 0$ (respectively, $\Re A \geqslant 0$ and $\Im A = 0$). 

If $A \geqslant 0$ and $$\sum_{j=1}^n a_{ij} = 1,~\forall~i \in \bracket{n},$$ then $A$ is called \emph{(row) stochastic}. If $A \geqslant 0$, then $A$ is stochastic if and only if $Ae = e$. Furthermore, if $A$ is stochastic, then $1 \in \spec(A)$ and $\rho(A) = 1$. It is known that the NIEP and the stochastic NIEP are equivalent (see, e.g., Johnson \cite[p.~114]{j1981}).

%------------------
\begin{observation}
    \thlabel{stochasticconvex}
    If $A_1,\ldots, A_m \in \mathsf{M}_n(\mathbb{R})$ are stochastic, then the matrix
    \[ A \coloneqq \sum_{k=1}^m \alpha_k A_k,\ \sum_{k=1}^m \alpha_k = 1,\ \alpha_k \geqslant 0,\ \forall k \in \bracket{m}, \]
    is stochastic.
\end{observation}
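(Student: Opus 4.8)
The plan is to verify the two defining properties of a row-stochastic matrix directly, leveraging the characterization recorded just above the statement: a nonnegative matrix $B \in \mathsf{M}_n(\mathbb{R})$ is stochastic if and only if $Be = e$.

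First I would check entrywise nonnegativity. Since each $A_k \geqslant 0$ and each $\alpha_k \geqslant 0$, every entry of $\alpha_k A_k$ is nonnegative, and a sum of nonnegative real numbers is nonnegative; hence $A = \sum_{k=1}^m \alpha_k A_k \geqslant 0$. Next I would check the row-sum condition via the vector identity $A_k e = e$ (valid because each $A_k$ is stochastic and nonnegative). By linearity of matrix-vector multiplication,
\[ Ae = \left( \sum_{k=1}^m \alpha_k A_k \right) e = \sum_{k=1}^m \alpha_k (A_k e) = \sum_{k=1}^m \alpha_k e = \left( \sum_{k=1}^m \alpha_k \right) e = e, \]
using $\sum_{k=1}^m \alpha_k = 1$ in the last step. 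Combining $A \geqslant 0$ with $Ae = e$ and invoking the stated equivalence yields that $A$ is stochastic.

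There is essentially no obstacle here; the only point requiring (trivial) care is that both hypotheses $\alpha_k \geqslant 0$ and $\sum_k \alpha_k = 1$ are used — nonnegativity of the $\alpha_k$ to preserve $A \geqslant 0$, and the sum-to-one condition to get $Ae = e$. One could alternatively argue purely at the level of scalar row sums, $\sum_{j=1}^n a_{ij} = \sum_{j=1}^n \sum_{k=1}^m \alpha_k [A_k]_{ij} = \sum_{k=1}^m \alpha_k \sum_{j=1}^n [A_k]_{ij} = \sum_{k=1}^m \alpha_k = 1$ for each $i \in \bracket{n}$, but the vector formulation is cleaner and matches the framing of the surrounding text.
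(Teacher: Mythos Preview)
Your proof is correct and is exactly the natural argument; the paper itself states this as an Observation without proof, so there is nothing to compare against beyond noting that your vector-identity argument $Ae = \sum_k \alpha_k (A_k e) = \sum_k \alpha_k e = e$ is precisely the verification the surrounding text would suggest.
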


We recall the Perron--Frobenius theorem for irreducible matrices.

%------------------------------------------
\begin{theorem}
    [{\cite[Theorem 8.4.4]{hj2013}}] 
    \thlabel{pftirr}
        Let $A \in \mathsf{M}_{n}(\mathbb{R})$ be irreducible and nonnegative, and suppose that $n \geqslant 2$. Then 
        \begin{enumerate}
        [label=(\roman*)]
        \item $\rho(A) >0$;
        \item $\rho(A)$ is an algebraically simple eigenvalue of $A$;
        \item there is a unique positive vector $x$ such that $Ax = \rho(A) x$ and $e^\top x = 1$; and 
        \item there is a unique positive vector $y$ such that $y^\top A = \rho(A) y^\top$ and $e^\top y = 1$.
        \end{enumerate}
\end{theorem}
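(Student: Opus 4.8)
This is the classical Perron--Frobenius theorem, so I will only sketch an approach. The plan is to first dispatch the case of a \emph{positive} matrix $A > 0$, and then bootstrap to the irreducible case using the fact that $B \coloneqq (I+A)^{n-1}$ is positive whenever $A \geqslant 0$ is irreducible.

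For the positive case I would use the Collatz--Wielandt function $f(x) \coloneqq \min\{ [Ax]_i / x_i : x_i > 0 \} = \max\{ \alpha \geqslant 0 : \alpha x \leqslant Ax \}$, defined for $0 \neq x \geqslant 0$; since $f$ is positively homogeneous of degree $0$, it suffices to maximize it over the simplex $\Delta \coloneqq \{ x \geqslant 0 : e^\top x = 1 \}$, on which it is bounded. The key monotonicity is that $\alpha x \leqslant Ax$ implies $\alpha(Ax) \leqslant A(Ax)$, with strict inequality in every coordinate unless $\alpha x = Ax$; hence $f(Ax) \geqslant f(x)$, with equality only when $x$ is an eigenvector. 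Because $A > 0$ carries $\Delta$ into a compact subset $\Delta'$ on which $f$ is continuous, a maximizing sequence composed with $A$ yields a maximizer $z \in \Delta'$ with $z > 0$ and $Az = \rho^* z$, where $\rho^* \coloneqq \sup_\Delta f$ (if $\rho^* z \neq Az$ one would get $f(Az) > \rho^*$, a contradiction). Next, any eigenpair $(\mu, w)$ satisfies $|\mu|\,|w| = |Aw| \leqslant A|w|$, so $|\mu| \leqslant f(|w|) \leqslant \rho^*$; thus $\rho^* = \rho(A) > 0$. Running the same argument on $A^\top$ produces a positive left eigenvector $y$. Finally: if $|\mu| = \rho$, then $A|w| \geqslant \rho|w|$, and pairing with $y$ forces equality, so $|w| > 0$, and equality in $|Aw| = A|w|$ with $A > 0$ forces the coordinates of $w$ to share a common argument, giving $\mu = \rho$ --- so $\rho$ is the unique eigenvalue of maximum modulus; a standard rescaling argument gives geometric simplicity, and a Jordan chain $Aw = \rho w + z$ would give $0 = y^\top(Aw - \rho w) = y^\top z > 0$, ruling out defectiveness, so $\rho$ is algebraically simple.

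To pass to an irreducible $A \geqslant 0$ with $n \geqslant 2$: its digraph is strongly connected, so every entry of $B = (I+A)^{n-1} = \sum_{k=0}^{n-1} \binom{n-1}{k} A^k$ is positive, i.e.\ $B > 0$. Applying the positive case to $B$ gives that $\rho(B)$ is algebraically simple with a positive eigenvector $v$, and the spectral mapping $t \mapsto (1+t)^{n-1}$ (with multiplicities, via Schur triangularization) together with $|1+\mu| \leqslant 1 + |\mu| \leqslant 1 + \rho(A)$ gives $\rho(B) = (1+\rho(A))^{n-1}$. Since $A$ commutes with $B$ and the $\rho(B)$-eigenspace is one-dimensional, it is $A$-invariant, so $Av = \lambda v$ for some $\lambda \in \spec(A)$; nonnegativity of $A$ and positivity of $v$ force $\lambda \geqslant 0$, and $\lambda = 0$ is impossible (it would make a row of $A$ vanish, contradicting irreducibility). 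Thus $(1+\lambda)^{n-1} = (1+\rho(A))^{n-1}$ with $\lambda, \rho(A) \geqslant 0$, so $\lambda = \rho(A) > 0$, which yields (i) and the positivity in (iii); transferring algebraic simplicity of $\rho(B)$ through the spectral mapping yields (ii); algebraic (hence geometric) simplicity yields the uniqueness in (iii); and applying everything to $A^\top$ (irreducible and nonnegative, with $\rho(A^\top) = \rho(A)$) yields (iv). I expect the main obstacle to be Step~1 --- specifically the attainment of the Collatz--Wielandt supremum (the function $f$ fails to be continuous on all of $\Delta$, which is precisely why one pushes through $A$ into the interior) and the promotion of geometric simplicity to algebraic simplicity; the combinatorial lemma $(I+A)^{n-1} > 0$ and the multiplicity-respecting spectral mapping used in Step~2 are routine but load-bearing.
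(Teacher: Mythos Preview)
The paper does not prove this theorem at all: it is quoted verbatim as \cite[Theorem 8.4.4]{hj2013} and used as background, with no argument supplied. So there is nothing in the paper to compare your sketch against.

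That said, your outline is a correct rendition of the standard textbook proof (and in fact tracks the development in Horn and Johnson rather closely): the Collatz--Wielandt variational characterization for the strictly positive case, the promotion of geometric to algebraic simplicity via a left Perron vector and a putative Jordan chain, and the reduction of the irreducible case to the positive case through $B=(I+A)^{n-1}>0$ together with the polynomial spectral mapping. The places you flag as delicate are indeed the right ones; the only cosmetic remark is that $Av=0$ with $v>0$ and $A\geqslant 0$ actually forces $A=0$, not merely a zero row, which of course still contradicts irreducibility for $n\geqslant 2$. Since the paper treats this result as a black box, your write-up already goes well beyond what the paper requires.
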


The vector $x$ in part (iii) of \thref{pftirr} is called the \emph{(right) Perron vector of $A$} and the vector $y$ in part (iv) of \thref{pftirr} is called the \emph{left Perron vector of A} \cite{hj2013}. 

%----------------------------
\section{Preliminary Results}

In this section, we cover more background and establish some ancillary results that will be useful in the sequel.

%--------------------------------------
\subsection{Complex Polyhedral Regions}

If $S \subseteq \mathbb{C}^n$, then the \emph{conical hull of $S$}, denoted by $\coni S = \coni(S)$, is defined by 
\[ \coni {S} = 
\begin{cases}
    \{ 0 \}, & S = \varnothing \\
    \left\{ \sum_{k=1}^m \alpha_k x_k \in \mathbb{C}^n \mid m \in \mathbb{N},~x_k \in S,~\alpha_k \geqslant 0 \right\}, & S \ne \varnothing.    
\end{cases} \]
i.e., when $S$ is nonempty, $\coni {S}$ consists of all \emph{conical combinations}. Similarly, the \emph{convex hull of $S$}, denoted by $\conv{S} = \conv(S)$ is defined by  
\[ \coni {S} = 
\begin{cases}
    \{ 0 \}, & S = \varnothing \\
    \left\{ \sum_{k=1}^m \alpha_k x_k \in \mathbb{C}^n \mid m \in \mathbb{N},~x_k \in S,\ \sum_{k=1}^m \alpha_k = 1,\ \alpha_k \geqslant 0 \right\}, & S \ne \varnothing    
\end{cases}. \]
The conical hull (convex hull) of a finite list $\{x_1,\ldots,x_n\}$ is abbreviated to $\coni(x_1,\ldots,x_n)$ (respectively, $\conv(x_1,\ldots,x_n)$).

% The \emph{interior of the conical hull} consists of all possible positive linear combinations, i.e., 
% \[ \interior(\coni{U}) = \left\{ \sum_{i=1}^k \alpha_i u_i \in V \mid k \in \mathbb{N},~u_i \in U,~\alpha_i > 0 \right\}. \]
%  is the set of all possible \emph{convex combinations} of vectors in $U$, i.e., 
% \[ \conv{U} = \left\{ \sum_{i=1}^k \alpha_i u_i \in V \mid k \in \mathbb{N},~u_i \in U,~\alpha \in \Delta^{k-1} \right\}.\]
% The \emph{interior of the convex hull} consists of all positive convex combinations, i.e., 
% \[ \interior(\conv{U}) = \left\{ \sum_{i=1}^k \alpha_i u_i \in V \mid k \in \mathbb{N},~u_i \in U,~\alpha \in \Delta^{k-1},\ \alpha > 0 \right\}.\]
A subset set $K$ of $\mathbb{C}^n$ is called
\begin{itemize}
    \item a \emph{cone} if $\coni x \subseteq K,\ \forall x \in K$; 
    \item \emph{convex} if $\conv(x,y) \subseteq K,\ \forall x,y \in K$;
    \item \emph{star-shaped} at $c \in K$ if $\conv(c,x) \subseteq K,\ \forall x \in K$; and 
    \item a \emph{convex cone} if $\coni(x,y) \subseteq K,\ \forall x,y \in K$. 
\end{itemize}

If $S,\ T \subseteq \mathbb{C}^n$, then $S+T$ denotes the \emph{Minkowski sum of $S$ and $T$}, i.e., $S + T \coloneqq \{ x \in \mathbb{C}^n \mid x = s + t, \ s \in S,\ t \in T\}$ and $-S \coloneqq \{ x \in \mathbb{C}^n \mid x = -s,\ s \in S \}$. If $K$ is a convex cone, then the \emph{dimension of $K$} is the quantity $\dim(K + (-K))$. If $K = \coni(x_1,\ldots,x_k)$, then $\dim K = \dim(\linearspan (x_1,\ldots,x_k))$.

A point $x$ of a convex cone $K$ is called an \emph{extreme direction} or \emph{extreme ray} if $u \notin K$ or $v \notin K$, whenever $x = \alpha u + (1-\alpha) v$, with \(\alpha \in (0,1)\) and $u,v$ linearly independent. A point $x$ of a convex set $K$ is called an \emph{extreme point} (of $K$) if $u \notin K$ or $v \notin K$, whenever \(x = \alpha u + (1-\alpha) v\), with \(\alpha \in (0,1)\) and $u \ne v$, i.e., $x$ does not lie in any open line segment contained in $K$. 

If $a \in \mathbb{C}^n$, $a \ne 0$, and $b \in \mathbb{R}$, then 
\[ \mathsf{H}(a,b) \coloneqq \left\{ x \in \mathbb{C}^n \mid \Re (\langle a, x \rangle) \geqslant b \right\} = \left\{ x \in \mathbb{C}^n \mid \Re (\langle x, a \rangle) \geqslant b \right\} \]
is a closed \emph{half-space} determined by the \emph{hyperplane} $\left\{ x \in \mathbb{C}^n \mid \Re (\langle x, a \rangle) = b \right\}$. 
% Since $\langle x,y \rangle = \overline{\langle y, x \rangle}$, it follows that
% \[ \Re (\langle a, x \rangle) \geqslant b \iff \Re (\langle x, a \rangle) \geqslant b, \]
If $b=0$, then the half-space $\mathsf{H}(a,b)$ is abbreviated to $\mathsf{H}(a)$ and contains the origin on its boundary.   

Any set of the form 
\[ \bigcap_{k=1}^m \mathsf{H}(a_k,b_k) = \bigcap_{k=1}^m \left\{ x \in \mathbb{C}^n \mid \Re (\langle a_k, x \rangle) \geqslant b_k \right\} \]
is called a \emph{polyhedron} and a bounded polyhedron is called a \emph{polytope}. Any set of the form 
\[ \bigcap_{k=1}^m \mathsf{H}(a_k) = \left\{ x \in \mathbb{C}^n \mid \Re (\langle a_k, x \rangle) \geqslant 0 \right\}\]
is called a \emph{polyhedral cone}.

Since $\langle\langle x,y \rangle\rangle \coloneqq \Re(\langle x, a \rangle)$ is a real inner product and $\mathbb{C}^n$ is a $2n$-dimensional real vector space, it follows that $\mathbb{C}^n$ is a $2n$-dimensional Euclidean space and the following celebrated result is applicable (see, e.g.,  \cite[Corollaries 2.13 and 2.14]{k1959} and \cite[pp.~170--178]{r1997}; or references in \cite[Remark 3.3]{b-i1969} or \cite[p.~87]{s1986}).

%--------------
\begin{theorem}
    [Farkas--Minkowski--Weyl]
        If $P$ is a polytope or a polyhedral cone in a Euclidean space $\mathbb{E}^n$, then there are vectors $x_1,\ldots, x_k$ such that 
        $$P = \conv(x_1,\ldots,x_k)$$ 
        or 
        $$P = \coni(x_1,\ldots,x_k),$$ 
        respectively.      
\end{theorem}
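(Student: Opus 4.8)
The plan is to reduce the statement to the classical real‑coordinate case and then treat the cone version and the polytope version in turn. First I would fix an orthonormal basis of $\mathbb{E}^n$, which identifies it isometrically with $\mathbb{R}^n$ carrying the standard inner product; under this identification each half‑space $\mathsf{H}(a,b)$ becomes $\{x \in \mathbb{R}^n : a^\top x \geqslant b\}$, a polyhedral cone becomes a set $K = \{x : Bx \geqslant 0\}$ for some matrix $B$ with rows $b_1^\top, \ldots, b_m^\top$, and a polytope becomes a bounded set $P = \{x : Ax \geqslant b\}$. The theorem then amounts to two implications: \emph{(Weyl)} every finitely generated cone $\coni(v_1, \ldots, v_r)$ is a polyhedral cone, and \emph{(Minkowski)} every polyhedral cone is finitely generated; from these the polytope statement follows by homogenization.

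For the Weyl implication I would invoke Fourier--Motzkin elimination. Letting $V$ be the matrix with columns $v_1, \ldots, v_r$, the cone $\coni(v_1, \ldots, v_r) = \{V\lambda : \lambda \geqslant 0\}$ is the image of the polyhedron $Q = \{(x,\lambda) \in \mathbb{R}^{n+r} : x - V\lambda = 0,\ \lambda \geqslant 0\}$ under the projection deleting the $\lambda$‑coordinates. Since eliminating a single variable from a finite system of linear inequalities produces another finite system of linear inequalities, an induction on $r$ shows this image is a polyhedron; being moreover a cone, it contains $0$ and is stable under positive scaling, which forces its inequality description to be homogeneous, so it is a polyhedral cone. (This also yields the standard fact that a finitely generated cone is closed, which I use below.)

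For the Minkowski implication I would argue by polarity. Given $K = \{x : b_i^\top x \geqslant 0,\ i = 1, \ldots, m\}$, put $L \coloneqq \coni(b_1, \ldots, b_m)$; because $L$ is generated by the $b_i$, one checks directly that $K$ equals the polar cone $L^\ast \coloneqq \{x : x^\top z \geqslant 0 \text{ for all } z \in L\}$. By the Weyl implication already proved, $L$ is polyhedral, say $L = \{z : c_j^\top z \geqslant 0,\ j = 1, \ldots, p\}$. The inclusion $\coni(c_1, \ldots, c_p) \subseteq L^\ast$ is immediate since each $c_j$ satisfies $c_j^\top z \geqslant 0$ for every $z \in L$. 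For the reverse inclusion I would apply the separating hyperplane theorem to the closed convex cone $\coni(c_1, \ldots, c_p)$: a point of $L^\ast$ lying outside this cone would be strictly separated by some $w$ with $c_j^\top w \geqslant 0$ for all $j$, hence $w \in L$, contradicting that the point lies in $L^\ast$. Therefore $K = L^\ast = \coni(c_1, \ldots, c_p)$ is finitely generated.

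Finally, for the polytope case I would homogenize. Assuming $P = \{x : a_k^\top x \geqslant b_k,\ k = 1, \ldots, m\}$ is nonempty and bounded, form the polyhedral cone $\widehat{P} = \{(x,t) \in \mathbb{R}^{n+1} : a_k^\top x - t b_k \geqslant 0\ (k = 1, \ldots, m),\ t \geqslant 0\}$. By the cone case, $\widehat{P} = \coni((w_1, s_1), \ldots, (w_r, s_r))$, and after rescaling generators we may assume each $s_j \in \{0, 1\}$. Since $x \in P$ if and only if $(x, 1) \in \widehat{P}$, reading off the last coordinate of a conical representation of $(x,1)$ shows $P = \conv(\{w_j : s_j = 1\}) + \coni(\{w_j : s_j = 0\})$; boundedness of the nonempty set $P$ forces every $w_j$ with $s_j = 0$ to vanish (otherwise it would be a nonzero recession direction of $P$), so $P = \conv(\{w_j : s_j = 1\})$. \textbf{The main obstacle} is the Minkowski implication: passing from an inequality description to a finite generating set requires sequencing the Weyl implication, the closedness of finitely generated cones, and the separation theorem in the right order, whereas the Fourier--Motzkin induction and the homogenization bookkeeping are routine. (The degenerate empty polytope is covered by the convention $\conv(\varnothing) = \{0\}$, or simply set aside.)
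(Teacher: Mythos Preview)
The paper does not give its own proof of this theorem: it is stated as a classical result and supported only by citations (to Klee, Rockafellar, Ben-Israel, and Schrijver). There is therefore nothing in the paper to compare your argument against.

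Your proposal is a correct and standard treatment. The Fourier--Motzkin step for the Weyl direction, the polarity-plus-separation step for the Minkowski direction (you are implicitly invoking the bipolar theorem $M^{\ast\ast}=M$ for the closed finitely generated cone $M=\coni(c_1,\ldots,c_p)$, which your separation argument establishes), and the homogenization for the polytope case are exactly the route taken in, e.g., Ziegler's \emph{Lectures on Polytopes} or Schrijver's \emph{Theory of Linear and Integer Programming}. One cosmetic point: the paper's convention is $\conv(\varnothing)=\{0\}$, so the empty polytope is not literally of the form $\conv(x_1,\ldots,x_k)$ under that convention; this is a harmless edge case the authors clearly do not intend to address, and your parenthetical remark is the right way to dispose of it.
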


The following four propositions will be useful in the sequel; because they are easy to establish, their proofs are omitted.

%------------------
\begin{proposition}
    \thlabel{exptball}
        If $n \in \mathbb{N}$, then $e$ is an extreme point of the convex set $B^n$.    
\end{proposition}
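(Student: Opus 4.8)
The plan is to reduce the claim to the strict convexity of the closed unit disk in $\mathbb{C}$, coordinate by coordinate. Suppose, toward a contradiction, that $e$ lies in an open line segment contained in $B^n$; that is, $e = \alpha u + (1-\alpha) v$ for some $\alpha \in (0,1)$ and $u, v \in B^n$ with $u \ne v$. Reading this equation in the $k$-th coordinate gives $1 = \alpha u_k + (1-\alpha) v_k$, where $\lvert u_k \rvert \leqslant 1$ and $\lvert v_k \rvert \leqslant 1$ since $u, v \in B^n$.

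The key step is the observation that $1$ is an extreme point of the closed unit disk $\overline{\mathbb{D}} \coloneqq \{ z \in \mathbb{C} \mid \lvert z \rvert \leqslant 1 \}$, equivalently that $\overline{\mathbb{D}}$ is strictly convex. Indeed, if $1 = \alpha z + (1-\alpha) w$ with $z, w \in \overline{\mathbb{D}}$ and $\alpha \in (0,1)$, then by the triangle inequality $1 = \lvert \alpha z + (1-\alpha) w \rvert \leqslant \alpha \lvert z \rvert + (1-\alpha)\lvert w \rvert \leqslant 1$, forcing $\lvert z \rvert = \lvert w \rvert = 1$; moreover equality in the triangle inequality forces $z$ and $w$ to be nonnegative real multiples of one another, and since both have modulus $1$ we conclude $z = w = 1$.

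Applying this to each coordinate $k \in \bracket{n}$ yields $u_k = v_k = 1$, hence $u = v = e$, contradicting $u \ne v$. Therefore $e$ does not lie in any open line segment contained in $B^n$, i.e., $e$ is an extreme point of $B^n$. There is no genuine obstacle here beyond recording the strict-convexity argument for $\overline{\mathbb{D}}$; the only point worth stating carefully is the equality case of the triangle inequality in $\mathbb{C}$, which pins down $z = w = 1$ rather than merely $\lvert z \rvert = \lvert w \rvert = 1$.
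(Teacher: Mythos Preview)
Your proof is correct: the coordinatewise reduction to the strict convexity of the closed unit disk in $\mathbb{C}$, together with the equality case of the triangle inequality, is exactly the natural argument. The paper itself omits the proof of this proposition (it is one of four propositions stated with the remark that ``because they are easy to establish, their proofs are omitted''), so there is no approach to compare against.
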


% \begin{proof}
% For contradiction, suppose that there are vectors $x, y \in B^n$ such that $e = \alpha x + (1 - \alpha) y$ with $\alpha \in (0,1)$ and $x, y \neq e$. Since $x$ and $y$ are distinct from $e$ and belong to $B^n$, it follows that $\Re(x) < e$ and $\Re(y) < e$. Thus,  
% \begin{equation*}
% e = \Re(e) = \Re(\alpha x + \beta y) = \alpha \Re(x) + \beta \Re(y) < \alpha e + \beta e = e, 
% \end{equation*}
% a contradiction.
% \end{proof}

If $n \in \mathbb{N}$, then the \emph{standard (or unit) $n$-simplex}, denoted by $\Delta^n$, is defined by 
\begin{equation*}
\Delta^n = \left\{ (\alpha_1, \dots, \alpha_{n+1}) \in \mathbb{R}^{n+1} \mid \sum_{i=1}^{n+1} \alpha_i =1,~\alpha_i \geqslant 0 \right\}.
\end{equation*}

%------------------
\begin{proposition}
    \thlabel{exptconv}
        Let $K \subseteq \mathbb{C}^n$ be convex and suppose that $x$ is an extreme point of $K$. If there are vectors $x_1, \dots, x_m \in K$ and a vector $\alpha =[\alpha_1~\cdots~\alpha_m]^\top \in \Delta^{m-1}$ such that $x = \sum_{k=1}^m \alpha_k x_k$, then  $\alpha \in \{ e_1, \dots, e_m \}$.
\end{proposition}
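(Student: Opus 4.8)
The plan is to argue by induction on $m$, working directly from the definition of an extreme point; we may harmlessly assume the $x_k$ are pairwise distinct, since otherwise one first collapses any repeated terms (the assertion $\alpha \in \{e_1,\dots,e_m\}$ is meant relative to a representation of $x$ as a convex combination of \emph{distinct} points of $K$). The base case $m=1$ is immediate because $\Delta^{0} = \{1\}$. For $m=2$ we have $x = \alpha_1 x_1 + \alpha_2 x_2$ with $\alpha_1 + \alpha_2 = 1$ and $\alpha_i \geqslant 0$; if $\alpha_1 \in (0,1)$ then also $\alpha_2 = 1-\alpha_1 \in (0,1)$ and $x_1 \neq x_2$, so $x$ lies in an open segment between two points of $K$, contradicting that $x$ is an extreme point. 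Hence $\alpha_1 \in \{0,1\}$, i.e.\ $\alpha \in \{e_1,e_2\}$.

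For the inductive step, suppose the claim holds for fewer than $m$ points and let $x = \sum_{k=1}^m \alpha_k x_k$ with $\alpha \in \Delta^{m-1}$. If $\alpha_k = 1$ for some $k$ we are done, so assume $\alpha_k < 1$ for every $k$; since $\sum_k \alpha_k = 1$ there is an index $j$ with $\alpha_j \in (0,1)$, and after relabeling we may take $j = m$. Write
\[ x = \alpha_m x_m + (1-\alpha_m)\, y, \qquad y \coloneqq \sum_{k=1}^{m-1} \frac{\alpha_k}{1-\alpha_m}\, x_k, \]
and note that the coefficients $\beta_k \coloneqq \alpha_k/(1-\alpha_m)$ are nonnegative and sum to $1$, so $\beta \coloneqq [\beta_1~\cdots~\beta_{m-1}]^\top \in \Delta^{m-2}$ and, by convexity of $K$, $y \in K$. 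Since $\alpha_m \in (0,1)$ and $x$ is an extreme point, we cannot have $x_m \neq y$; hence $x_m = y$, and therefore $x = \alpha_m x_m + (1-\alpha_m) x_m = x_m = y$. Now $x = y = \sum_{k=1}^{m-1} \beta_k x_k$ exhibits the extreme point $x$ as a convex combination of the distinct points $x_1,\dots,x_{m-1} \in K$, so the inductive hypothesis gives $\beta = e_i$ for some $i \leqslant m-1$, i.e.\ $x = x_i$. But then $x_i = x = x_m$ with $i \neq m$, contradicting the distinctness of the $x_k$. Thus the case ``$\alpha_k < 1$ for all $k$'' is impossible, and the induction is complete.

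The only genuine subtlety here is the bookkeeping around repeated points: literally as stated the conclusion requires the $x_k$ to be distinct (with repetitions allowed, $x = \tfrac12 x + \tfrac12 x$ already fails it), so the cleanest exposition opens by reducing to the distinct case. Beyond that, the argument is routine; the key mechanism, isolated in the inductive step, is that splitting off one term with weight strictly between $0$ and $1$ forces $x$ to coincide with both pieces, which then feeds a strictly smaller convex representation of $x$ into the inductive hypothesis. One could alternatively invoke the Minkowski/Krein–Milman circle of ideas, but the self-contained induction above is shorter and keeps the proposition elementary, consistent with its billing in the text.
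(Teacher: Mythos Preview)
Your proof is correct. The paper itself omits the proof of this proposition (grouping it with three other propositions whose ``proofs are omitted'' because ``they are easy to establish''), so there is no approach to compare against; your induction from the definition of extreme point is exactly the kind of elementary argument the authors have in mind, and your observation that the conclusion $\alpha \in \{e_1,\dots,e_m\}$ tacitly requires the $x_k$ to be distinct is a genuine and worthwhile clarification---the paper only applies the proposition in settings (rows of an invertible matrix) where distinctness is automatic, so the slip is harmless there, but it is good that you flagged it.
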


% \begin{proof}
% For contradiction, if 
% \[ x = \sum_{k=1}^m \alpha_k x_k,~\alpha \in \Delta^{m-1} \backslash \{ e_1, \dots, e_m \}, \] 
% then $\alpha$ must have at least two nonzero entries. Without loss of generality, assume that $\alpha_1 \ne 0$. Notice that $1-\alpha_1 \ne 0$ and 
%     \begin{equation*}
%     x = \sum_{k=1}^m \alpha_k x_k = \alpha_1 x_1 + (1 - \alpha_1) \sum_{k=2}^m \frac{\alpha_k}{1 - \alpha_1} x_k.
%     \end{equation*}
% Because  
%     \begin{equation*}
%     \sum_{k=2}^m \frac{\alpha_k}{1 - \alpha_1} = \frac{1}{1 - \alpha_1} \sum_{k=2}^m \alpha_k = \frac{1}{1 - \alpha_1}(1 - \alpha_1) = 1
%     \end{equation*}
% and $K$ is convex, we have 
%     \begin{equation*}
%     \sum_{k=2}^m \frac{\alpha_k}{1 - \alpha_1} x_k \in K,
%     \end{equation*}
% Thus, $x$ lies on an open line segment contained in $K$, a contradiction.
% \end{proof}

%------------------
\begin{proposition}
    \thlabel{cor:allonesextreme}
        If $S = \{x_1,\dots, x_k\} \subseteq B^n$, then $e \in \conv{(S)}$ if and only if $e \in S$.
\end{proposition}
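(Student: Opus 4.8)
The plan is to dispatch the two implications separately, with the reverse implication being essentially immediate and the forward implication reduced to the two preceding propositions. For the ``if'' direction: if $e \in S$, then trivially $e \in \conv(S)$, since any set is contained in its convex hull (equivalently, $e = 1 \cdot e$ is a convex combination of elements of $S$).

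For the ``only if'' direction, suppose $e \in \conv(S)$. Since $S$ is finite, this means there is a vector $\alpha = [\alpha_1~\cdots~\alpha_k]^\top \in \Delta^{k-1}$ with $e = \sum_{j=1}^k \alpha_j x_j$. The key observation is that $B^n$ is a convex subset of $\mathbb{C}^n$ containing $S$ (hence containing $\conv(S)$), and by \thref{exptball} the vector $e$ is an extreme point of $B^n$. Thus $e$ is an extreme point of the convex set $B^n$ that happens to be written as a convex combination of the points $x_1,\dots,x_k \in B^n$. Applying \thref{exptconv} (with $K = B^n$ and $x = e$), we conclude $\alpha \in \{e_1,\dots,e_k\}$; that is, exactly one coefficient equals $1$ and the rest vanish. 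Therefore $e = x_j$ for that index $j$, which gives $e \in S$.

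There is essentially no obstacle here: the statement is a direct corollary of \thref{exptball} and \thref{exptconv}, and the only thing to be careful about is recording that $\conv(S) \subseteq B^n$ so that the extreme-point machinery applies (i.e., that all the $x_j$ lie in the convex set of which $e$ is extreme). One could alternatively give a self-contained argument by noting that $[e]_i = 1 = \sum_j \alpha_j [x_j]_i$ with $|[x_j]_i| \leq 1$ forces $[x_j]_i = 1$ for every $j$ with $\alpha_j > 0$ and every coordinate $i$, hence $x_j = e$ for such $j$; but invoking the two propositions is cleaner and keeps the exposition uniform with the surrounding material.
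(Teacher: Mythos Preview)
Your proof is correct. The paper actually omits the proof of this proposition (together with \thref{exptball}, \thref{exptconv}, and \thref{conisub}, stating that ``their proofs are omitted''), but the ordering of the propositions makes clear that precisely your reduction to \thref{exptball} and \thref{exptconv} is what the authors had in mind.
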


% \begin{proof}
% The sufficiency of the condition is trivial and the necessity of the condition follows from Propositions \ref{exptball} and \ref{exptconv} and the fact that $B^n$ is convex.
% \end{proof}

%------------------
\begin{proposition}
    \thlabel{conisub}
    If $S$ is a finite subset of a convex cone $K$, then $\coni(S) \subseteq K$. 
\end{proposition}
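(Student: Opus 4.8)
The plan is to prove this by induction on the cardinality $m := |S|$ of the finite set $S = \{x_1,\ldots,x_m\} \subseteq K$, after one preliminary normalization. By definition, every element of $\coni(S)$ is a finite conical combination $\sum_{j=1}^N \beta_j s_j$ with $s_j \in S$ and $\beta_j \geqslant 0$; collecting like terms (grouping the indices $j$ according to which of $x_1,\ldots,x_m$ appears) shows that every such point can be rewritten in the form $\sum_{k=1}^m \alpha_k x_k$ with $\alpha_k \geqslant 0$. So it suffices to verify that $\sum_{k=1}^m \alpha_k x_k \in K$ for arbitrary nonnegative scalars $\alpha_1,\ldots,\alpha_m$.

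For the base case $m = 1$, the point $\alpha_1 x_1$ lies in $\coni x_1 \subseteq K$ because $K$ is a cone. For the inductive step, assuming the claim for sets of size $m-1$, I would split off the last term and write
\[ \sum_{k=1}^m \alpha_k x_k = y + z, \qquad y := \sum_{k=1}^{m-1} \alpha_k x_k, \quad z := \alpha_m x_m. \]
The induction hypothesis applied to $\{x_1,\ldots,x_{m-1}\} \subseteq K$ gives $y \in K$, and $z \in \coni x_m \subseteq K$ since $K$ is a cone. Finally, because $K$ is a convex cone, $\coni(y,z) \subseteq K$; in particular $y + z = 1\cdot y + 1\cdot z \in \coni(y,z) \subseteq K$. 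This closes the induction and yields $\coni(S) \subseteq K$.

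The only point to watch is the degenerate case $S = \varnothing$, where $\coni(S) = \{0\}$: this is handled by the observation that a nonempty convex cone contains $0$ (pick any $x \in K$; then $0 = 0\cdot x + 0\cdot x \in \coni(x,x) \subseteq K$), which is the convention implicit in the definitions above. I do not expect any genuine obstacle here — the argument is a routine "associativity of pairwise conical combinations" induction, and all the content sits in the definitions of \emph{cone} and \emph{convex cone} already recorded in the excerpt.
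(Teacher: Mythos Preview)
Your argument is correct; the only subtlety is the empty case, which you handle appropriately by noting that any nonempty convex cone contains $0$. The paper itself omits the proof of this proposition (it is listed among four propositions whose proofs are ``easy to establish''), so there is nothing to compare against---your induction on $|S|$ is exactly the routine verification the authors had in mind.
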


% \begin{proof}
%     The result is clear when $S$ is empty; if $S = \{x_1\} \subset K$ and $v \in \coni(S)$, then 
%     \[ v = \alpha x_1 = \frac{\alpha}{2}x_1 + \frac{\alpha}{2}x_1 \in K. \]

%     For the induction-step, assume that the result holds for all sets containing $k \geqslant 1$ vectors. If $S = \{ x_1, \ldots, x_{k+1}\} \subset K$ and $v \in \coni(S)$, then
%     \begin{align*}
%         v 
%         = \sum_{j=1}^{k+1} \alpha_j x_j 
%         &= \left( \sum_{j=1}^{k} \alpha_j x_j \right) + \alpha_{k+1} x_{k+1}        \\ 
%         &= 1\left( \sum_{j=1}^{k} \alpha_j x_j \right) + \alpha_{k+1} x_{k+1} \in K.
%     \end{align*}
%     The entire result follows by the principle of mathematical induction.
% \end{proof}

%------------------
\begin{proposition}
    \thlabel{conecontain}
        Let $K$ and $\hat{K}$ be convex cones. If $K = \coni(x_1,\dots,x_m)$, then $K \subseteq \hat{K}$ if and only if $x_1,\dots,x_m \in \hat{K}$.
\end{proposition}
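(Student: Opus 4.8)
The plan is to prove the two implications separately, each being immediate from the definition of a convex cone together with the preceding propositions.

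For the forward implication, suppose $K \subseteq \hat{K}$. For each $k \in \bracket{m}$, the vector $x_k$ is itself a conical combination of $x_1,\dots,x_m$ (take the coefficient of $x_k$ equal to $1$ and all others equal to $0$), so $x_k \in \coni(x_1,\dots,x_m) = K \subseteq \hat{K}$. Hence $x_1,\dots,x_m \in \hat{K}$, as claimed.

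For the reverse implication, suppose $x_1,\dots,x_m \in \hat{K}$. Then $S \coloneqq \{x_1,\dots,x_m\}$ is a finite subset of the convex cone $\hat{K}$, so \thref{conisub} yields $\coni(S) \subseteq \hat{K}$. Since $\coni(S) = \coni(x_1,\dots,x_m) = K$, we conclude $K \subseteq \hat{K}$.

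There is no genuine obstacle here; the only two points to keep straight are that each listed generator $x_k$ actually lies in $K$ (which is what forces them into $\hat{K}$ once $K \subseteq \hat{K}$), and that a convex cone is closed under conical combinations of finitely many of its elements — exactly the content of \thref{conisub} — which disposes of the converse.
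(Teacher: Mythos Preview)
Your proof is correct. The paper omits the proof of this proposition as easy to establish, and your argument---each generator trivially lies in $K$ for the forward direction, and an appeal to \thref{conisub} for the reverse---is exactly the routine verification the paper has in mind.
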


% \begin{proof}
%     Necessity is clear since $x_i \in K$ and sufficiency is an immediate consequence of \thref{conisub}.  
% \end{proof}

%----------------------------------------------
\subsection{Region Comprising Stochastic Lists}

For $x \in \mathbb{C}^n$, denote by $\Lambda(x)$ the list $\{ x_1, \dots, x_n\}$ and for every natural number $n$, let 
\[ \mathbb{L}^n \coloneqq \{ x \in \mathbb{C}^n \mid \Lambda(x) = \spec(A),~A \in \mathsf{M}_{n}\left( \mathbb{R} \right),~A\geqslant 0 \} \]
and 
\[ \mathbb{SL}^n \coloneqq \{ x \in \mathbb{C}^n \mid \Lambda(x) = \spec(A),~A \in \mathsf{M}_{n}\left( \mathbb{R} \right),~A~\text{stochastic} \}. \]
Clearly, $\mathbb{L}^n$ is a cone that contains $\mathbb{SL}^n$ and a characterization of either set constitutes a solution to the NIEP. As such, we catalog various properties of each set. 

Recall that if $x,y \in \mathbb{C}^n$, then the angle $\theta = \theta(x,y)$ between them is defined by
\begin{equation}
    \label{complexangle}
    \theta = \theta(x,y) =
    \begin{cases}
        \frac{\pi}{2}, & (x = 0) \vee (y = 0) \\
        \arccos \frac{\Re (\langle x, y \rangle)}{\vert\vert x \vert\vert_2 \cdot \vert\vert y \vert\vert_2}, & (x \ne 0) \wedge (y \ne 0) 
    \end{cases}.
\end{equation}
Because non-real eigenvalues of real matrices occur in complex conjugate pairs, it follows that $x \in \mathbb{L}^n \iff \bar{x} \in \mathbb{L}^n$. Thus, $\Lambda(x) = \overline{\Lambda(x)} = \Lambda(\bar{x})$ whenever $x\in \mathbb{L}^n$ or $\bar{x}\in \mathbb{L}^n$. Furthermore, if $x \in \mathbb{L}^n$, then there is a nonnegative matrix $A$ such that $\spec A = \Lambda(x) = \Lambda(\bar{x})$. Consequently, 
\begin{equation}
    \label{anglexe}
    \langle x, e \rangle = e^\ast x = e^\top x = \sum_{k=1}^n x_k = \trace{A} \geqslant 0
\end{equation}
and 
\begin{equation}
    \label{anglexbare}
    \langle \overline{x}, e \rangle = e^\top \bar{x} = \sum_{k=1}^n \overline{x_k} = \trace{A} \geqslant 0
\end{equation}
since the realizing matrix is nonnegative.

%------------------
\begin{proposition}
    \thlabel{SLangle}
    If $x \in \mathbb{L}^n$, then $\theta(x,e) \in [0,\pi/2]$ and $\theta(\bar{x},e) \in [0,\pi/2]$. 
\end{proposition}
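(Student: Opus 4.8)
The plan is to observe that the statement is essentially immediate from the inequalities \eqref{anglexe} and \eqref{anglexbare} together with the definition \eqref{complexangle} of the angle, so the proof is short. First I would dispose of the trivial cases: if $x = 0$, then $\theta(x,e) = \pi/2 \in [0,\pi/2]$ by the first branch of \eqref{complexangle}, and similarly $\theta(\bar x, e) = \pi/2$; since $e \neq 0$ always, these are the only degenerate cases to check.

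Next, assume $x \neq 0$ (equivalently $\bar x \neq 0$). Then by the second branch of \eqref{complexangle},
\[ \theta(x,e) = \arccos \frac{\Re(\langle x, e \rangle)}{\norm{x}_2 \cdot \norm{e}_2}. \]
The denominator is strictly positive. By \eqref{anglexe}, the numerator $\Re(\langle x,e\rangle) = \langle x, e \rangle = \trace A \geqslant 0$ (it is already real and nonnegative because $A \geqslant 0$). Hence the argument of $\arccos$ lies in $[0,1]$ — it is nonnegative by the above, and bounded above by $1$ by the Cauchy--Schwarz inequality $|\Re(\langle x,e\rangle)| \leqslant |\langle x,e\rangle| \leqslant \norm{x}_2 \norm{e}_2$. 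Since $\arccos$ maps $[0,1]$ onto $[0,\pi/2]$, we conclude $\theta(x,e) \in [0,\pi/2]$. The argument for $\bar x$ is identical, using \eqref{anglexbare} in place of \eqref{anglexe}.

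There is really no obstacle here; the only things to be careful about are (i) citing Cauchy--Schwarz to get the upper bound of $1$ inside the $\arccos$, which is needed for the expression to be well-defined and to land in the correct subinterval, and (ii) noting that $e$ is never the zero vector so the first branch of \eqref{complexangle} is triggered only when $x = 0$. One could phrase the whole thing in one sentence, but it is worth writing out the two displayed inequalities to make the dependence on nonnegativity of the realizing matrix explicit, since that nonnegativity is exactly what forces the angle into the first quadrant rather than merely into $[0,\pi]$.
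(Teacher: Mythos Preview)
Your proof is correct and follows the same approach as the paper, which simply states that the result is immediate from \eqref{complexangle}, \eqref{anglexe}, and \eqref{anglexbare}. Your write-up merely unpacks those references (handling the $x=0$ case and invoking Cauchy--Schwarz for well-definedness of the $\arccos$), but the underlying argument is identical.
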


\begin{proof}
    Immediate from \eqref{complexangle}, \eqref{anglexe}, and \eqref{anglexbare}.
\end{proof}

%------------
\begin{lemma}
    \thlabel{seq}
    If $\{ A_k \}_{k=1}^\infty$ is a convergent sequence of stochastic $n$-by-$n$ matrices with limit $L$, then $L$ is stochastic.  
\end{lemma}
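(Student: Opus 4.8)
The plan is to show that the defining properties of a stochastic matrix---nonnegativity and row sums equal to one---are preserved under taking limits, which amounts to the observation that each is a closed condition. Formally, I would argue as follows. Let $A_k = [a^{(k)}_{ij}]$ and $L = [\ell_{ij}]$, so that $\lim_{k \to \infty} a^{(k)}_{ij} = \ell_{ij}$ for every $i, j \in [n]$ (convergence of matrices in $\mathsf{M}_n$ being entrywise, since all norms on a finite-dimensional space are equivalent).

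\begin{proof}
    Write $A_k = [a^{(k)}_{ij}]$ and $L = [\ell_{ij}]$. Since $\mathsf{M}_n(\mathbb{R})$ is finite-dimensional, $A_k \to L$ is equivalent to $a^{(k)}_{ij} \to \ell_{ij}$ for every $i,j \in \bracket{n}$. For each fixed pair $(i,j)$, we have $a^{(k)}_{ij} \geqslant 0$ for all $k$, and the set $[0,\infty)$ is closed in $\mathbb{R}$; hence $\ell_{ij} = \lim_{k\to\infty} a^{(k)}_{ij} \geqslant 0$. Thus $L \geqslant 0$. Next, for each fixed $i \in \bracket{n}$, the map $A \mapsto \sum_{j=1}^n a_{ij}$ is continuous (it is a finite sum of the continuous coordinate projections), so
    \[ \sum_{j=1}^n \ell_{ij} = \sum_{j=1}^n \lim_{k \to \infty} a^{(k)}_{ij} = \lim_{k \to \infty} \sum_{j=1}^n a^{(k)}_{ij} = \lim_{k\to\infty} 1 = 1. \]
    Therefore $L \geqslant 0$ and $Le = e$, so $L$ is stochastic.
\end{proof}

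There is essentially no obstacle here: the only thing to be careful about is stating precisely what ``convergent sequence of matrices'' means, and invoking the equivalence of norms (or simply fixing, once and for all, the entrywise notion of convergence) so that entrywise limits are legitimate. Everything else is the elementary fact that $[0,\infty)$ is closed and that finite sums of continuous functions are continuous. An even slicker phrasing, if one prefers, is to note that $\{A \in \mathsf{M}_n(\mathbb{R}) : A \geqslant 0,\ Ae = e\}$ is the intersection of the closed half-spaces $\{A : a_{ij} \geqslant 0\}$ with the affine subspace $\{A : Ae = e\}$, hence closed, and a closed set contains the limits of its convergent sequences; I would include the short computational version above since it is self-contained and requires no appeal to the topology of $\mathsf{M}_n$ beyond entrywise convergence.
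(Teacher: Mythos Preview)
Your proof is correct. The paper itself offers no argument beyond the remark ``Routine analysis exercise,'' so your write-up is precisely the routine verification the authors had in mind, carried out in full detail.
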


\begin{proof}
    Routine analysis exercise.
\end{proof}

% \begin{proof}
%     Denote by $a_{ij}^k$ the $(i,j)$-entry of $A_k$. Since every matrix in the sequence is stochastic, it follows that 
%     $$\sum_{j=1}^n \ell_{ij} = \sum_{j=1}^n \lim_{k \rightarrow \infty} \left( a_{ij}^k \right) = \lim_{k \rightarrow \infty} \left( \sum_{j=1}^n a_{ij}^k \right) = \lim_{k \rightarrow \infty} 1 = 1,\ \forall i \in \bracket{n},$$ 
%     i.e., every row of $L$ sums to unity. 
    
%     Next, we demonstrate that $L \geqslant 0$.  To the contrary, suppose that $L$ contains a negative entry, say $\ell_{pq}$. Since $A_k \longrightarrow L$, for every $\varepsilon > 0$, there is an integer $K = K(\varepsilon)$ such that $\begin{Vmatrix} A_k - L \end{Vmatrix}_\infty < \varepsilon,\ \forall k \geqslant K$. In particular, if $\varepsilon > 0$ and $\ell_{pq} + \varepsilon < 0$, then 
%     \[ \vert a_{pq}^k - \ell_{pq} \vert \leqslant \sum_{j=1}^n \vert a_{pj}^k - \ell_{pj} \vert \leqslant \begin{Vmatrix} A_k - L \end{Vmatrix}_\infty < \varepsilon \]
%     i.e., $\ell_{pq} - \varepsilon < a_{pq}^k < \ell_{pq} + \varepsilon < 0$, a contradiction.   
% \end{proof}

%--------------
\begin{theorem}
    If $n$ is a positive integer, then $\mathbb{SL}^n$ is compact. 
\end{theorem}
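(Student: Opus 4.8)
The plan is to invoke the Heine--Borel theorem: since $\mathbb{SL}^n$ is a subset of the finite-dimensional space $\mathbb{C}^n$, it suffices to prove that $\mathbb{SL}^n$ is closed and bounded.

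Boundedness is immediate. If $x \in \mathbb{SL}^n$, then $\Lambda(x) = \spec(A)$ for some stochastic $A \in \mathsf{M}_n(\mathbb{R})$, and every such $A$ satisfies $\rho(A) = 1$; hence $|x_k| \leqslant 1$ for each $k \in \bracket{n}$, so $|| x ||_\infty \leqslant 1$, i.e., $\mathbb{SL}^n \subseteq B^n$. For closedness, I would take a sequence $\{x^{(m)}\}_{m=1}^\infty \subseteq \mathbb{SL}^n$ with $x^{(m)} \to x$ and, for each $m$, choose a stochastic $A_m \in \mathsf{M}_n(\mathbb{R})$ with $\spec(A_m) = \Lambda(x^{(m)})$. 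The set of $n$-by-$n$ row-stochastic matrices is a bounded subset of $\mathsf{M}_n(\mathbb{R}) \cong \mathbb{R}^{n^2}$ (since $0 \leqslant a_{ij} \leqslant 1$) and is closed, being cut out by the conditions $\sum_j a_{ij} = 1$ and $a_{ij} \geqslant 0$; hence it is compact. Thus $\{A_m\}$ has a subsequence $\{A_{m_j}\}$ converging to some $L$, and $L$ is stochastic by \thref{seq}.

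It then remains to see that $\Lambda(x) = \spec(L)$. Because $\spec(A_{m_j}) = \Lambda(x^{(m_j)})$ counts eigenvalues with multiplicity, we have $\chi_{A_{m_j}}(t) = \prod_{k=1}^n (t - x^{(m_j)}_k)$. The coefficients of $\chi_{A_{m_j}}$ are polynomial, hence continuous, functions of the entries of $A_{m_j}$, so $\chi_{A_{m_j}} \to \chi_L$ coefficientwise as $j \to \infty$; on the other hand, $x^{(m_j)} \to x$ forces $\prod_{k=1}^n (t - x^{(m_j)}_k) \to \prod_{k=1}^n (t - x_k)$ coefficientwise. Therefore $\chi_L(t) = \prod_{k=1}^n (t - x_k)$, which says precisely that $\spec(L) = \Lambda(x)$. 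Since $L$ is stochastic, $x \in \mathbb{SL}^n$, establishing that $\mathbb{SL}^n$ is closed, and hence compact.

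The only delicate point is the final step: the eigenvalues of a matrix vary continuously with the matrix only as an unordered multiset, not coordinatewise, so one cannot simply assert ``$\spec(A_{m_j}) \to \spec(L)$ entrywise.'' Routing the argument through the characteristic polynomial (equivalently, through the elementary symmetric functions of the coordinates, which are the coefficients up to sign) sidesteps this, since those quantities are genuinely continuous both in the matrix entries and in the coordinates of the $x^{(m)}$, and two vectors in $\mathbb{C}^n$ with the same characteristic polynomial determine the same list. Everything else is bookkeeping with Heine--Borel and \thref{seq}.
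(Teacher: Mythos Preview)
Your proof is correct and follows essentially the same approach as the paper: boundedness from $\rho(A)=1$ for stochastic $A$, and closedness from \thref{seq} together with continuity of eigenvalues. The paper's proof is terser---it simply cites the topological continuity of eigenvalues and \thref{seq}---whereas you have carefully unpacked that step by passing through characteristic polynomials to handle the multiset issue, and you have made explicit the compactness of the set of stochastic matrices (which the paper leaves implicit in its appeal to \thref{seq}).
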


\begin{proof}
Following \thref{seq} and the fact that the eigenvalues of a matrix are topologically continuous \cite[pp.~620--621]{lz2019}, it follows that $\mathbb{SL}^n$ is closed. Because the spectral radius of a stochastic matrix is one, we obtain $\mathbb{SL}^n \subseteq S^n$, i.e., $\mathbb{SL}^n$ is bounded. 
\end{proof}

%------------
\begin{theorem}
    If $n$ is a positive integer, then $\mathbb{SL}^n$ is star-shaped at $e$.    
\end{theorem}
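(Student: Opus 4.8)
The plan is to exhibit, for each $x \in \mathbb{SL}^n$, an explicit family of stochastic matrices whose spectra trace out the segment $\conv(e, x)$; in other words, to show $\Lambda(tx + (1-t)e) \in \mathbb{SL}^n$ for all $t \in [0,1]$. The key observation is that if $A$ is stochastic with $\spec(A) = \Lambda(x)$, then for $t \in [0,1]$ the convex combination $A_t \coloneqq tA + (1-t)I$ is again stochastic by \thref{stochasticconvex} (since both $A$ and $I$ are stochastic and $t, 1-t \geqslant 0$ sum to $1$). So the first step is to record this construction. The second step is to compute the spectrum of $A_t$: since $A_t = tA + (1-t)I$ is a polynomial in $A$, its eigenvalues are exactly $t\lambda + (1-t)$ as $\lambda$ ranges over $\spec(A)$ (with multiplicity), i.e. $\spec(A_t) = \Lambda(tx + (1-t)e)$. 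Hence $tx + (1-t)e \in \mathbb{SL}^n$ for every $t \in [0,1]$, which is precisely the statement that $\conv(e, x) \subseteq \mathbb{SL}^n$; since $x \in \mathbb{SL}^n$ was arbitrary, $\mathbb{SL}^n$ is star-shaped at $e$ by the definition given in the Complex Polyhedral Regions subsection.

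One small bookkeeping point deserves care: the definition of $\mathbb{SL}^n$ requires that the list $\Lambda(x)$, viewed as a multiset, equal $\spec(A_t)$; since eigenvalues of $p(A)$ for a polynomial $p$ are $p(\mu)$ over the eigenvalues $\mu$ of $A$ counted with algebraic multiplicity, and here $p(t') = tt' + (1-t)$ is affine, the multiplicities are preserved and the identity of lists is exact. I would also note that the Perron eigenvalue $1 \in \Lambda(x)$ maps to $t \cdot 1 + (1-t) = 1$, consistent with $A_t$ being stochastic, and that $\rho(A_t) = 1$ as it must be. No convergence or compactness input is needed here — this is purely algebraic.

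I do not anticipate a genuine obstacle: the argument is essentially a one-line interpolation between a given stochastic realization and the identity. The only thing to be mildly careful about is invoking \thref{stochasticconvex} with $m = 2$, $A_1 = A$, $A_2 = I$, $\alpha_1 = t$, $\alpha_2 = 1-t$, and then correctly translating "eigenvalues of an affine function of $A$" into the multiset equality demanded by the definition of $\mathbb{SL}^n$. So the write-up will be short: state $A_t = tA + (1-t)I$, cite \thref{stochasticconvex} for stochasticity, cite the spectral mapping for the spectrum, and conclude star-shapedness at $e$ directly from the definition.
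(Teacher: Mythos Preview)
Your proposal is correct and follows essentially the same approach as the paper: form the convex combination $tA + (1-t)I$, invoke \thref{stochasticconvex} for stochasticity, and read off the spectrum as $\{t\lambda_k + (1-t)\}$. The only cosmetic difference is that the paper passes to a Jordan form to see the spectrum of $\alpha A + \beta I$ on the diagonal of an upper-triangular matrix, whereas you appeal directly to the spectral mapping theorem for polynomials in $A$; both arguments yield the same identity of multisets.
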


\begin{proof}
If $x \in \mathbb{SL}^n$, then there is a stochastic matrix $A$ such that $\Lambda(x) = \spec(A)$. Write $\spec(A)=\{1,x_2,\ldots,x_n\}$ and let $S$ be an invertible matrix such that $J = S^{-1} A S$ is a Jordan canonical form of $A$. If $\alpha \in [0,1]$ and $\beta \coloneqq  1 - \alpha$, then the matrix $\alpha A + \beta I$ is stochastic (\thref{stochasticconvex}) and since $S^{-1}\left(\alpha A + \beta I\right)S = \alpha J + \beta I$ is upper-triangular, it follows that $\spec(\alpha A + \beta I) = \{1,\alpha x_2 + \beta,\ldots,\alpha x_n + \beta\}$, i.e., $\alpha x + \beta e \in \mathbb{SL}^n$, $\forall \alpha \in [0,1]$.
\end{proof}

If $x \in \mathbb{SL}^n$, then $1 = \rho\left(\Lambda(x)\right) \in \Lambda(x)$ and if $P$ is a permutation matrix corresponding to the permutation $\sigma$, then \( \Lambda(Px) = \Lambda(\sigma(x)) = \Lambda(x) \). In light of these two facts, there is no loss in generality in restricting attention to $\mathbb{SL}_1^n \coloneqq \left\{ x \in \mathbb{SL}^n \mid x_1 = 1 \right\}$. 

The following eigenvalue-perturbation result is due to Brauer \cite[Theorem 27]{b1952} (for more proofs, see \cite{mp2021} and references therein). 

%--------------
\begin{theorem}
    [Brauer]
    \thlabel{Brauer}
        Let $A \in \mathsf{M}_n(\mathbb{C})$ and suppose that 
        $$\spec(A) = \{ \lambda_1, \ldots, \lambda_k, \dots, \lambda_n \}.$$ 
        If $x$ is an eigenvector associated with $\lambda_k$ and $y \in \mathbb{C}^n$, then $\spec(A + xy^*) = \{ \lambda_1, \ldots, \lambda_k + y^*x, \ldots, \lambda_n \}$. 
\end{theorem}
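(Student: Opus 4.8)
The plan is to prove Brauer's theorem by exhibiting a single similarity transformation that simultaneously block-triangularizes both $A$ and $A + xy^*$, so that the perturbed spectrum can be read off from the diagonal blocks. Since $x$ is an eigenvector it is nonzero, so I would extend $\{x\}$ to a basis of $\mathbb{C}^n$ and assemble the basis vectors as the columns of an invertible matrix $S = [\,x\ \vert\ S_2\,] \in \mathsf{GL}_n(\mathbb{C})$, where $S_2 \in \mathsf{M}_{n \times (n-1)}(\mathbb{C})$. Because $Ax = \lambda_k x$, the first column of $S^{-1} A S$ equals $S^{-1}(Ax) = \lambda_k S^{-1} x = \lambda_k e_1$, so
\[ S^{-1} A S = \begin{bmatrix} \lambda_k & w^\top \\ 0 & B \end{bmatrix} \]
for some $w \in \mathbb{C}^{n-1}$ and $B \in \mathsf{M}_{n-1}(\mathbb{C})$. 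Since similarity preserves the characteristic polynomial and the characteristic polynomial of a block upper-triangular matrix factors over its diagonal blocks, $\spec(B)$ is exactly $\spec(A)$ with one copy of $\lambda_k$ deleted, i.e., $\{\lambda_1, \ldots, \lambda_{k-1}, \lambda_{k+1}, \ldots, \lambda_n\}$.

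Next I would apply the same similarity to $A + xy^*$. Writing $y^* S = [\,y^*x\ \vert\ y^* S_2\,]$ and using $S^{-1} x = e_1$, one gets $S^{-1}(xy^*)S = (S^{-1}x)(y^* S) = e_1 [\,y^*x\ \vert\ y^* S_2\,]$, which is the matrix whose first row is $[\,y^*x\ \vert\ y^*S_2\,]$ and whose remaining rows vanish. Adding this to the displayed form of $S^{-1} A S$ yields
\[ S^{-1}(A + xy^*) S = \begin{bmatrix} \lambda_k + y^*x & w^\top + (y^*S_2) \\ 0 & B \end{bmatrix}, \]
again block upper triangular with the same trailing block $B$. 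Hence $\spec(A + xy^*) = \{\lambda_k + y^*x\} \cup \spec(B) = \{\lambda_1, \ldots, \lambda_k + y^*x, \ldots, \lambda_n\}$, which is the claim.

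I do not expect a genuine obstacle; the only point that needs care is the multiset bookkeeping when $\lambda_k$ is a repeated eigenvalue, but this causes no trouble because the argument relies only on the invariance of the characteristic polynomial under similarity and its multiplicativity over diagonal blocks. As an alternative that avoids choosing a basis, one could argue purely with characteristic polynomials: for $t \notin \spec(A)$ with $t \ne \lambda_k$, the matrix-determinant lemma and the identity $(tI - A)^{-1} x = (t - \lambda_k)^{-1} x$ give
\[ \det(tI - A - xy^*) = \det(tI - A)\bigl(1 - y^*(tI - A)^{-1} x\bigr) = \det(tI - A)\cdot\frac{t - \lambda_k - y^*x}{t - \lambda_k}, \]
and cancelling the factor $(t - \lambda_k)$ from $\det(tI - A) = \prod_{i=1}^n (t - \lambda_i)$, together with the fact that two monic polynomials of degree $n$ agreeing at all but finitely many points are equal, yields the conclusion. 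I would present the similarity argument as the main proof and note this second route as a remark.
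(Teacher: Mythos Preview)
Your proof is correct; both the similarity argument and the determinant route are standard and complete, and your handling of the multiset bookkeeping is fine.

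However, there is nothing to compare against: the paper does not give its own proof of this theorem. It states the result with a citation to Brauer's original paper and points the reader to \cite{mp2021} and references therein for proofs. So your write-up stands on its own rather than as an alternative to anything in the paper.
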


%--------------
\begin{theorem}
    If $n > 1$, then $\pi_1(\mathbb{SL}_1^n)$ is star-shaped at the origin. 
\end{theorem}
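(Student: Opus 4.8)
The plan is to show directly that for every $w \in \pi_1(\mathbb{SL}_1^n)$ and every $\alpha \in [0,1]$ the scaled point $\alpha w$ again lies in $\pi_1(\mathbb{SL}_1^n)$, by exhibiting an explicit stochastic matrix whose spectrum realizes it. By definition of $\mathbb{SL}_1^n$ there is $x \in \mathbb{SL}_1^n$ with $\pi_1(x) = w$; thus $x_1 = 1$ and there is a stochastic $A$ with $\spec(A) = \Lambda(x) = \{1, x_2, \ldots, x_n\}$, where $w = (x_2, \ldots, x_n)$. Since $A$ is stochastic, $Ae = e$, so $e$ is an eigenvector of $\alpha A$ associated with the eigenvalue $\alpha$.

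For $\alpha \in [0,1]$ set $B \coloneqq \alpha A + \frac{1-\alpha}{n}\, e e^\top$. Because $\frac{1}{n} e e^\top$ is stochastic, \thref{stochasticconvex} gives that $B$ is stochastic. Writing $B = \alpha A + e y^*$ with $y \coloneqq \frac{1-\alpha}{n}\, e$ (so that $y^* e = \frac{1-\alpha}{n}\, e^\top e = 1-\alpha$) and invoking \thref{Brauer} with the eigenpair $(\alpha, e)$ of $\alpha A$, we obtain $\spec(B) = \{\alpha + (1-\alpha),\, \alpha x_2, \ldots, \alpha x_n\} = \{1, \alpha x_2, \ldots, \alpha x_n\}$. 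Hence $z \coloneqq (1, \alpha x_2, \ldots, \alpha x_n)$ satisfies $\Lambda(z) = \spec(B)$ with $B$ stochastic and $z_1 = 1$, i.e.\ $z \in \mathbb{SL}_1^n$, and $\pi_1(z) = \alpha w$. Therefore $\alpha w \in \pi_1(\mathbb{SL}_1^n)$ for all $\alpha \in [0,1]$, which is exactly the statement that $\pi_1(\mathbb{SL}_1^n)$ is star-shaped at the origin.

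The argument is essentially a single application of Brauer's rank-one perturbation theorem once the right perturbation is identified, so I do not expect a genuine obstacle. The only points needing care are: (i) choosing the perturbation so that $B$ stays simultaneously entrywise nonnegative and row-stochastic — the symmetric choice $\frac{1-\alpha}{n} e e^\top$ does both, and in particular avoids any appeal to the left Perron vector or to irreducibility of $A$ (which need not hold for a general stochastic matrix); and (ii) ensuring that the eigenvalue altered by Brauer's formula is the copy of $1$ coming from the Perron root, so that scaling by $\alpha$ affects precisely the coordinates retained by $\pi_1$. The extreme cases are immediate: $\alpha = 0$ gives $B = \frac{1}{n} e e^\top$ with $\spec(B) = \{1, 0, \ldots, 0\}$, so $0 \in \pi_1(\mathbb{SL}_1^n)$, and $\alpha = 1$ is trivial.
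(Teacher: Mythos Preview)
Your proof is correct and is essentially identical to the paper's own argument: both form the convex combination $B = \alpha A + \frac{1-\alpha}{n}\,ee^\top$, verify that $B$ is stochastic via \thref{stochasticconvex}, and apply \thref{Brauer} to the eigenpair $(\alpha,e)$ of $\alpha A$ to obtain $\spec(B) = \{1,\alpha x_2,\ldots,\alpha x_n\}$.
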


\begin{proof}
If $x \in \mathbb{SL}_n$, then there is a stochastic matrix $A$ such that $\Lambda(x) = \spec A$. If $\spec A = \{1,x_2,\ldots,x_n\}$ and $\alpha \in \mathbb{C}$, then $\spec(\alpha A) = \{ \alpha, \alpha x_2, \ldots, \alpha x_n \}$. By \thref{stochasticconvex}, the matrix 
$$\alpha A + \frac{1-\alpha}{n}ee^\top$$ 
is stochastic. Since $(\alpha A)e = \alpha e$, it follows that  
\begin{align*}
\spec \left( \alpha A + \frac{1-\alpha}{n}ee^\top \right) 
&= \spec \left( \alpha A + e \left( \frac{1-\alpha}{n} e^\top\right) \right)                                        \\
&= \left\{ \alpha + \frac{1-\alpha}{n} (e^\top e), \alpha x_2,\ldots,\alpha x_n \right\}  \tag{\thref{Brauer}}   \\
&= \{ \alpha + (1 - \alpha), \alpha x_2,\ldots, \alpha x_n \}                                                       \\
&=\{ 1, \alpha x_2, \ldots, \alpha x_n \}. 
\end{align*}
Thus, $\alpha \pi_1(x) \in \pi_1(\mathbb{SL}^n), \forall\alpha\in[0,1]$. 
\end{proof}

% %--------------
% \begin{theorem}
%     \thlabel{cyclicpoly}
%     If $x \in \mathbb{SL}_1^n$ and $p \in \mathbb{N}_0$, then $\conv(e, \ldots, x^p) \subseteq \mathbb{SL}_1^n$.
% \end{theorem}

% \begin{proof}
%     By hypothesis, there is a stochastic matrix $A$ such that $\spec A = \Lambda(x)$. If $y \in \conv(e, \ldots, x^p)$, then  
%     \[ y = \sum_{k=0}^p \alpha_k x^k,\ \sum_{k=0}^p \alpha_k = 1,\ \alpha_k \geqslant 0,\ 0 \leqslant k \leqslant p. \]
%     If $k \in \mathbb{N}_0$, then $A^k$ is a stochastic matrix with spectrum $\Lambda(x^k)$. Thus, by \thref{stochasticconvex}, the matrix $M = \sum_{k=0}^p \alpha_k A^p$ is a stochastic matrix with spectrum $\Lambda(y)$. 
% \end{proof}

%---------------------------------------
\subsection{The Karpelevi{\v{c}} Region}

In 1938, Kolmogorov posed the question of characterizing the region in the complex plane, denoted by $\Theta_n$, that occur as an eigenvalue of a stochastic matrix \cite[p.~2]{s1972}. Dmitriev and Dynkin \cite{dd1946} (see \cite[Appendix A]{s1972} or \cite{ams140} for an English translation) obtained a partial solution, and Karpelevi{\v{c}} \cite{k1951} (see \cite{ams140} for an English translation) solved the problem by showing that the boundary of $\Theta_n$ consists of curvilinear \emph{arcs} (hereinafter, \emph{Karpelevi{\v{c}} arcs} or \emph{K-arcs}), whose points satisfy a polynomial equation that depends on the endpoints of the arc. 

If $n \in \mathbb{N}$, then $\mathcal{F}_n \coloneqq  \{ p/q \mid 0\leq p \leqslant q \leq n,~\gcd(p,q)=1 \}$ denotes the set of \emph{Farey fractions}. The following result is the celebrated Karpelevi{\v{c}} theorem in a form due to Ito \cite{i1997}. 

%--------------
\begin{theorem}
    [Karpelevi{\v{c}} theorem] 
        \thlabel{karpito}
    The region $\Theta_n$ is symmetric with respect to the real axis, is included in the unit-disc $\{ z \in \mathbb{C} \mid |z| \leq 1\}$, and intersects the unit-circle $\{ z \in \mathbb{C} \mid |z| = 1\}$ at the points $\left\{ e^{\frac{2\pi p}{q}\ii} \mid p/q \in \mathcal{F}_n \right\}$. The boundary of $\Theta_n$ consists of these points and of curvilinear arcs connecting them in circular order. 
    
    Let the endpoints of an arc be $e^{\frac{2\pi p}{q}\ii}$ and $e^{\frac{2\pi r}{s}\ii}$ ($q \leqslant s$). Each of these arcs is given by the following parametric equation:  
        \begin{equation}
            \label{itoequation}
                t^{s} \left( t^{q} - \beta \right)^{\floor{n/q}} = \alpha^{\floor{n/q}} t^{q\floor{n/q}},\ \alpha \in [0,1], ~\beta\coloneqq 1-\alpha.
        \end{equation} 
\end{theorem}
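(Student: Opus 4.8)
The plan is to reconstruct, in the streamlined form of Ito \cite{i1997}, the classical argument of Dmitriev--Dynkin \cite{dd1946} and Karpelevi\v{c} \cite{k1951}, organized into four steps: (i) the coarse features of $\Theta_n$; (ii) the unit-circle points; (iii) the reduction of the boundary to eigenvalues of extremal stochastic matrices; and (iv) the two inclusions that pin down the arcs. For Step (i): symmetry about the real axis is immediate because a real matrix has self-conjugate spectrum, exactly as observed earlier for $\mathbb{L}^n$; the inclusion $\Theta_n \subseteq \{z \in \mathbb{C} \mid |z| \leqslant 1\}$ is the Perron--Frobenius bound $\rho(A) \leqslant \|A\|_\infty = 1$ for stochastic $A$; compactness of $\Theta_n$ follows as in the proof that $\mathbb{SL}^n$ is compact (closedness from continuity of the eigenvalues together with \thref{seq}, boundedness from the spectral-radius bound); and $\Theta_n$ is star-shaped at the origin and contains $[-1,1]$ (diagonal entries give $[0,1]$; a transposition padded by the identity gives $-1$), because if $\lambda \in \spec A$ for a stochastic $A$ with $\spec A = \{1, \lambda_2, \dots, \lambda_n\}$ and $\alpha \in [0,1]$, then, exactly as in the proof that $\pi_1(\mathbb{SL}_1^n)$ is star-shaped at $0$, the matrix $\alpha A + \tfrac{1-\alpha}{n} e e^\top$ is stochastic with spectrum $\{1, \alpha\lambda_2, \dots, \alpha\lambda_n\}$ by \thref{Brauer}.

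\emph{Step (ii).} If $A$ is stochastic and $\lambda \in \spec A$ with $|\lambda| = 1$, restrict to the principal submatrix $B$ supported on a strongly connected component of the digraph of $A$ whose spectrum contains $\lambda$; then $B$ is irreducible and nonnegative with $\rho(B) \leqslant 1$, so $\rho(B) = 1$, and the cyclic part of Perron--Frobenius theory (the index of imprimitivity) forces the peripheral spectrum of $B$ to consist of the $h$-th roots of unity for some $h$ no larger than the order of $B$, hence no larger than $n$; thus $\lambda = e^{\frac{2\pi p}{q}\ii}$ with $q \leqslant n$ and $\gcd(p,q) = 1$. Conversely, for each such $p/q \in \mathcal{F}_n$ the direct sum of the $q$-by-$q$ cyclic permutation matrix with $I_{n-q}$ is stochastic and has $e^{\frac{2\pi p}{q}\ii}$ as an eigenvalue. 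Hence $\Theta_n$ meets the unit circle in exactly the Farey points, which are joined in circular order by whatever $\partial\Theta_n$ does between them.

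\emph{Step (iii).} The crux is that a boundary point of $\Theta_n$ is an eigenvalue of an \emph{extremal} stochastic matrix, and that these extremal matrices admit an explicit classification. Partition the stochastic matrices by zero-pattern; for a fixed pattern the realizing set is a polytope (a face of a product of simplices cut out by the row-sum conditions), $\Theta_n$ is the union over patterns of the images of these polytopes under ``take an eigenvalue'', and a point of $\partial\Theta_n$ is attained at a matrix on a low-dimensional face. A perturbation/dimension count --- move along a path of stochastic matrices while keeping a prescribed simple eigenvalue on the boundary curve --- forces the realizing matrix to be very sparse, its digraph essentially a single $n$-cycle, possibly with one chord or a few loops. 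From this Karpelevi\v{c}'s analysis extracts the explicit short list of one-parameter extremal families and the two geometric types of arc they trace; for each family involved, computing the characteristic polynomial and eliminating the internal parameters in favour of $\alpha$ leaves exactly one branch of the plane curve \eqref{itoequation}, the exponent $\floor{n/q}$ and the extra factor $t^{s}$ being dictated by how many disjoint $q$-cyclic layers fit inside an $n$-cycle and by the denominator $s$ of the far endpoint.

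\emph{Step (iv).} For the inclusion of $\Theta_n$ in the closed region bounded by the arcs: by Step (iii) every point of $\partial\Theta_n$ lies on an arc traced by an extremal family, and Step (iii) identifies each such extremal arc with a branch of \eqref{itoequation}; a circular-ordering argument shows these pieces join the consecutive Farey points exactly as asserted, and since $\Theta_n$ is compact, connected, and (for $n \geqslant 3$) the closure of its interior, it is recovered as the closed region those arcs bound. For the reverse inclusion, each asserted arc between consecutive Farey points $e^{\frac{2\pi p}{q}\ii}$ and $e^{\frac{2\pi r}{s}\ii}$ is traced, as $\alpha$ runs over $[0,1]$, by the relevant eigenvalue of the corresponding extremal family from Step (iii), the endpoints occurring at $\alpha \in \{0,1\}$ where the matrix degenerates to a cyclic permutation padded by loops. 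I expect the genuine obstacle to be the bookkeeping inside Steps (iii)--(iv): proving the extremal zero-patterns are \emph{exactly} the cyclic ones with no sporadic exceptions at special $n$, producing the complete short list of extremal families and their two arc types, determining which ordered pairs of Farey points are joined, and checking that $\floor{n/q}$ and $t^{s}$ come out correctly for every endpoint pair. This combinatorial and number-theoretic content is what makes Karpelevi\v{c}'s original proof long; Ito's formulation \cite{i1997} compresses the statement and streamlines the verification but does not remove the difficulty. Everything in Steps (i) and (ii) is routine given the Perron--Frobenius and Brauer tools already available in the excerpt.
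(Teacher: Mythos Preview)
The paper does not prove this theorem at all: it is stated as a cited result (attributed to Karpelevi\v{c} \cite{k1951}, in the form due to Ito \cite{i1997}) and is immediately followed by definitions and remarks, with no proof environment. So there is no ``paper's own proof'' against which to compare your sketch.

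That said, your outline is a fair high-level summary of the classical route (Dmitriev--Dynkin for the coarse features and unit-circle intersection, Karpelevi\v{c} for the extremal-pattern classification, Ito for the unified polynomial description). Steps (i) and (ii) are essentially complete as written; you are also candid that Steps (iii)--(iv) are where the real work lies and that you are only gesturing at it. If this is meant to stand in for a proof in the present paper, it cannot: the extremal-pattern classification and the verification that the resulting characteristic polynomials collapse to \eqref{itoequation} for every Farey pair is precisely the long combinatorial argument that Karpelevi\v{c}'s paper carries out, and a sketch that says ``a perturbation/dimension count forces the digraph to be essentially a single $n$-cycle with one chord'' is not a proof of that fact. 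The appropriate move here, and the one the paper makes, is simply to cite the result.
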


Following \cite{jp2017}, equation \eqref{itoequation} is called the \emph{Ito equation} and the polynomial
    \begin{equation}
        \label{itopolynomial}
            f_\alpha(t)\coloneqq  t^s(t^q-\beta)^{\floor{n/q}}-\alpha^{\floor{n/q}}t^{q\floor{n/q}},\ \alpha\in[0,1],\ \beta \coloneqq 1-\alpha    
    \end{equation}
is called the \textit{Ito polynomial}. The Ito polynomials are divided into four types as follows (note that $s \ne q\floor{n/q}$ since $\gcd{(q,s)} = 1$): 
\begin{itemize}
    \item If $\floor{n/q} = n$, then  
        \begin{equation}
            \label{type_zero_ito}
                f_\alpha^{\mathsf{0}} (t) = (t-\beta)^n - \alpha^n,~\alpha\in[0,1],\ \beta \coloneqq 1-\alpha.
        \end{equation}
    is called a \emph{Type 0 (Ito) polynomial} and corresponds to the \emph{Farey pair} $(0/1,1/n)$.

    \item If $\floor{n/q} = 1$, then  
        \begin{equation}
            \label{type_one_ito}
                f_\alpha^{\mathsf{I}}(t) = t^s-\beta t^{s-q} - \alpha,\ \alpha\in[0,1],\ \beta \coloneqq 1-\alpha.
        \end{equation}
    is called a \emph{Type I (Ito) polynomial}.
    
    \item If $1 < \floor{n/q} < n$ and $s > q\floor{n/q}$, then 
        \begin{equation}
            \label{type_two_ito}
                f_\alpha^{\mathsf{II}} (t) = (t^q-\beta)^{\floor{n/q}}-\alpha^{\floor{n/q}}t^{q\floor{n/q}-s},\ \alpha\in[0,1],\ \beta \coloneqq 1-\alpha.
        \end{equation}
    is called a \emph{Type II (Ito) polynomial}.
    
    \item If $1 < \floor{n/q} < n$ and $s < q\floor{n/q}$, then 
        \begin{equation}
            \label{type_tre_ito}
                f_\alpha^{\mathsf{III}} (t) = t^{s-q\floor{n/q}}(t^q-\beta)^{\floor{n/q}}-\alpha^{\floor{n/q}},\ \alpha\in[0,1],\ \beta \coloneqq 1-\alpha. 
        \end{equation}
    is called a \emph{Type III (Ito) polynomial}.
\end{itemize}
The polynomials given by equations \eqref{type_zero_ito}--\eqref{type_tre_ito} are called the \emph{reduced Ito polynomials}. Johnson and Paparella \cite[Theorem 3.2]{jp2017} showed that if $\alpha \in [0,1]$, then there is a stochastic matrix $M = M(\alpha)$ such that $\chi_M = f_\alpha^\mathsf{X}$, where $\mathsf{X} \in \{ \mathsf{0},\mathsf{I},\mathsf{II}, \mathsf{III} \}$. 

%-------------
\begin{remark}
    \label{rem:distinctroots}
    If $p$ is a polynomial and $p'$ denotes its derivative, then $p$ has a multiple root if and only if the \textit{resultant} $R(p,p') \coloneqq \det(S(p,p')^\top)$ vanishes (here $S(p,p')$ denotes the \textit{Sylvester matrix}). Since the coefficients of the polynomials $f_\alpha$ and $f_\alpha'$ depend on the single parameter $\alpha$, it follows that $\pi(\alpha) \coloneqq \det(S(f_\alpha,f_\alpha')^\top)$ is a univariate polynomial in $\alpha$ of degree at most 
    \[\deg f_\alpha + (\deg f_\alpha-1) = 2\deg f_\alpha - 1. \] 
    Hence, there are at most $2\deg f_\alpha - 1$ zeros and at most $2\deg f_\alpha - 1$ values in $[0,1]$ such that $f_\alpha$ does not have distinct zeros.
\end{remark}

More is known about the number of zeros of $f_\alpha$ corresponding to the Type I arc $K_n({1}/{n}, {1}/{(n-1)})$.

%-----------
\begin{proposition}
    \label{distincteigs}
    {\cite[Proposition 4.1]{jp2016}}
    For $n \geqslant 4$, let 
    \begin{equation}
    f_\alpha (t) \coloneqq t^n - \beta t - \alpha, ~\alpha \in [0,1], ~\beta \coloneqq 1 - \alpha. \label{polyalpha}
    \end{equation} 
    \begin{enumerate}
    [label=(\roman*)]
    \item If $n$ is even, then $f_\alpha$ has $n$ distinct roots.
    \item If $n$ is odd and $\alpha \geqslant \beta$,  then $f_\alpha$ has $n$ distinct roots.
    \item If $n$ is odd and $\alpha < \beta$,  then $f_\alpha$ has a multiple root if and only if \( n^n \alpha^{n-1} - (n-1)^{n-1} \beta^n = 0 \).   
    \end{enumerate}
\end{proposition}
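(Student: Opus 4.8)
The plan is to avoid the resultant $\pi(\alpha)$ of Remark~\ref{rem:distinctroots} and instead work directly with $f_\alpha$ and $f_\alpha'(t) = nt^{n-1} - \beta$: a number $t_0 \in \mathbb{C}$ is a multiple root of $f_\alpha$ exactly when $f_\alpha(t_0) = f_\alpha'(t_0) = 0$, and this system is nearly explicitly solvable. First I would dispose of the two boundary parameter values. If $\alpha = 1$ then $f_1(t) = t^n - 1$, and if $\alpha = 0$ then $f_0(t) = t(t^{n-1} - 1)$; both have $n$ distinct roots, and for $\alpha = 0$ the quantity $n^n\alpha^{n-1} - (n-1)^{n-1}\beta^n = -(n-1)^{n-1}$ is nonzero, so statement~(iii) is consistent there. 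Hence it suffices to consider $\alpha \in (0,1)$, for which $\beta \in (0,1)$ too.

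For $\alpha \in (0,1)$, suppose $t_0$ is a multiple root. From $f_\alpha'(t_0) = 0$ we get $t_0^{\,n-1} = \beta/n$, so $t_0 \ne 0$ and $t_0^{\,n} = (\beta/n)t_0$; substituting into $f_\alpha(t_0) = 0$ and solving gives the unique real value $t_0 = -\tfrac{n\alpha}{(n-1)\beta} < 0$. Reversing the steps, this $t_0$ is in fact a multiple root of $f_\alpha$ if and only if it additionally satisfies $t_0^{\,n-1} = \beta/n$. Since $t_0 < 0$ while $\beta/n > 0$, the relation $t_0^{\,n-1} = \beta/n$ forces $n - 1$ to be even. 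This settles~(i): when $n$ is even there is no multiple root for $\alpha \in (0,1)$, and with the boundary cases $f_\alpha$ has $n$ distinct roots for all $\alpha \in [0,1]$.

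For $n$ odd and $\alpha \in (0,1)$, $n-1$ is even, so $t_0^{\,n-1} = \bigl(\tfrac{n\alpha}{(n-1)\beta}\bigr)^{n-1}$ and the condition $t_0^{\,n-1} = \beta/n$ is equivalent to $n^n\alpha^{n-1} = (n-1)^{n-1}\beta^n$; together with the boundary case $\alpha = 0$ this is precisely~(iii). For~(ii), with $n$ odd and $\alpha \geqslant \beta$, the value $\alpha = 1$ is already handled, and for $\alpha \in [\tfrac12, 1)$ one has $\beta \in (0, \tfrac12]$ and the chain
\[ n^n\alpha^{n-1} \geqslant n^n\beta^{n-1} > (n-1)^{n-1}\beta^{n-1} \geqslant (n-1)^{n-1}\beta^n \]
— using $\alpha \geqslant \beta$, then $n^n > (n-1)^{n-1}$, then $0 < \beta \leqslant 1$ — shows the equation of~(iii) fails, so $f_\alpha$ has no multiple root, hence $n$ distinct roots.

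Everything here is elementary. The one genuinely load-bearing step is the parity deduction that a negative $t_0$ with $t_0^{\,n-1} > 0$ forces $n$ odd, which is what makes part~(i) work and isolates the exceptional curve in part~(iii); the remaining care is purely in matching the hypotheses $\alpha \lessgtr \beta$ against the regimes $\alpha = 0$, $0 < \alpha < 1$, $\alpha = 1$, and in confirming that the ``only if'' determination of $t_0$ truly reverses.
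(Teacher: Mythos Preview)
Your argument is correct. The key reduction---solving $f_\alpha(t_0)=f_\alpha'(t_0)=0$ for the forced value $t_0=-\tfrac{n\alpha}{(n-1)\beta}$ and then testing whether this candidate actually satisfies $t_0^{\,n-1}=\beta/n$---is sound, and the parity observation that $t_0<0$ with $t_0^{\,n-1}>0$ forces $n-1$ even cleanly separates the even and odd cases. The converse direction does reverse as you claim: if $t_0=-\tfrac{n\alpha}{(n-1)\beta}$ and $t_0^{\,n-1}=\beta/n$, then $f_\alpha'(t_0)=0$ is immediate and $f_\alpha(t_0)=t_0\cdot\tfrac{\beta}{n}-\beta t_0-\alpha=\tfrac{(1-n)\beta}{n}t_0-\alpha=0$ by the choice of $t_0$. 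The inequality chain for (ii) is fine, and the boundary checks at $\alpha\in\{0,1\}$ are handled.

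As for comparison: the present paper does not supply its own proof of this proposition---it is quoted verbatim from \cite[Proposition~4.1]{jp2016}, and Remark~\ref{rem:distinctroots} only gestures at the resultant $R(f_\alpha,f_\alpha')$ as a general mechanism. Your approach sidesteps the resultant computation entirely by exploiting the sparsity of $f_\alpha'$ (only two terms), which makes the system $f_\alpha=f_\alpha'=0$ explicitly solvable; this is more transparent than forming the Sylvester determinant and yields the discriminantal equation $n^n\alpha^{n-1}=(n-1)^{n-1}\beta^n$ directly rather than as a factor of a larger polynomial in $\alpha$.
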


If $f_\alpha$ is defined as in \eqref{polyalpha}, $n$ is odd, $\alpha < \beta$, and \( \pi(\alpha) = (n-1)^{n-1} \beta^n - n^n \alpha^{n-1} \), then it is known that the polynomial $\pi$ has only one zero in the interval $[0,1]$ \cite[Remark 4.3]{jp2017}.

%----------------------------------------
\subsection{Extremal and Boundary Points}

If $\lambda \in \Theta_n$, then $\lambda$ is called \emph{extremal} if $\alpha \lambda \not \in \Theta_n$ whenever $\alpha > 1$. It is an easy exercise to show that if $\lambda$ is extremal, then $\lambda \in \partial \Theta_n$. Karpelevi{\v{c}} asserted that the converse follows from the closure of $\Theta_n$ but this is incorrect in view of the two- and three-dimensional cases. However, an elementary proof was given recently by Munger et al.~\cite[Section 6]{mnp2024}.

%-----------------
\begin{definition}
    If $x = \begin{bmatrix} 1 & x_2 & \cdots & x_n \end{bmatrix}^\top \in \mathbb{SL}_1^n$, then $x$ is called \emph{extremal (in $\mathbb{SL}_1^n$)} if $\alpha \pi_1(x) \not \in \pi_1(\mathbb{SL}_1^n)$, $\forall\alpha > 1$. The set of extremal points in $\mathbb{SL}_1^n$ is denoted by $\mathbb{E}_n$. 
\end{definition}

%--------------
\begin{theorem}
    If $n$ is a positive integer, then $\mathbb{E}_n \subseteq \partial \mathbb{SL}_1^n$.
\end{theorem}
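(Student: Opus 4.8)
The plan is to show that an extremal point $x$ cannot be interior to $\mathbb{SL}_1^n$ by exhibiting, from interiority, a scaling $\alpha\pi_1(x)$ with $\alpha>1$ that still lies in $\pi_1(\mathbb{SL}_1^n)$, contradicting extremality. The key point is that $\mathbb{SL}_1^n$ is a subset of the affine hyperplane $\{x_1=1\}$, so its topology should be understood relative to that hyperplane (equivalently, via the homeomorphism $\pi_1$ onto its image in $\mathbb{C}^{n-1}$). Thus I will first argue that it suffices to work with $\pi_1(\mathbb{SL}_1^n) \subseteq \mathbb{C}^{n-1}$ and show that an extremal $x$ has $\pi_1(x) \in \partial\,\pi_1(\mathbb{SL}_1^n)$; pulling back along the homeomorphism $\pi_1$ then gives $x \in \partial\,\mathbb{SL}_1^n$.

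The core step uses the preceding theorem that $\pi_1(\mathbb{SL}_1^n)$ is star-shaped at the origin. Suppose, for contradiction, that $x$ is extremal but $y \coloneqq \pi_1(x)$ is an interior point of $\pi_1(\mathbb{SL}_1^n)$. If $y = 0$ then every $\alpha y = 0 = y \in \pi_1(\mathbb{SL}_1^n)$ for all $\alpha$, so the extremality condition $\alpha\pi_1(x) \notin \pi_1(\mathbb{SL}_1^n)$ for $\alpha>1$ fails outright; hence $y \ne 0$. Since $y$ is interior, there is an open ball $B(y,\varepsilon) \subseteq \pi_1(\mathbb{SL}_1^n)$. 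Choose $\alpha = 1 + \delta$ with $0 < \delta < \varepsilon/\|y\|_2$; then $\|\alpha y - y\|_2 = \delta\|y\|_2 < \varepsilon$, so $\alpha y \in B(y,\varepsilon) \subseteq \pi_1(\mathbb{SL}_1^n)$. This directly contradicts extremality of $x$. (If one prefers to invoke star-shapedness more visibly: interiority gives a point $z = \alpha y \in \pi_1(\mathbb{SL}_1^n)$ slightly past $y$ on the ray from $0$ through $y$, and then star-shapedness at $0$ fills in the whole segment $[0,z]$, but the contradiction already comes from $z$ itself.)

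Finally I will record the pullback: $\pi_1$ restricted to the hyperplane $\{x_1 = 1\}$ is an affine homeomorphism onto $\mathbb{C}^{n-1}$, carrying $\mathbb{SL}_1^n$ onto $\pi_1(\mathbb{SL}_1^n)$ and hence $\partial\,\pi_1(\mathbb{SL}_1^n)$ back onto $\partial\,\mathbb{SL}_1^n$ (boundary taken relative to $\{x_1=1\}$, which is the natural ambient space here). Since we showed $\pi_1(x) \in \partial\,\pi_1(\mathbb{SL}_1^n)$, we conclude $x \in \partial\,\mathbb{SL}_1^n$, as desired.

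The main obstacle is not the estimate but the bookkeeping about \emph{which} boundary is meant: $\mathbb{SL}_1^n$ has empty interior as a subset of $\mathbb{C}^n$, so every point is a boundary point in that ambient sense and the statement would be vacuous; the content is that boundary is taken within the hyperplane $\{x_1 = 1\}$, or equivalently that $\pi_1(x)$ is a genuine boundary point of the full-dimensional body $\pi_1(\mathbb{SL}_1^n) \subseteq \mathbb{C}^{n-1}$. I would make this convention explicit at the outset and then the argument above is short. One should also dispose of the degenerate small-$n$ cases (e.g.\ $n=1$, where $\mathbb{SL}_1^1 = \{1\}$ and $\mathbb{E}_1 = \{1\}$) separately, as the star-shapedness theorem was stated for $n>1$.
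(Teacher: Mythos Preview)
Your proposal is correct and follows essentially the same strategy as the paper: assume an extremal $x$ lies in the interior, then produce $\alpha>1$ with $\alpha\pi_1(x)$ still in $\pi_1(\mathbb{SL}_1^n)$ by a simple norm estimate, contradicting extremality. The paper works with the $\|\cdot\|_\infty$-ball directly in the hyperplane $\{x_1=1\}$ and takes $\alpha=1+\varepsilon$ (using $\|\pi_1(x)\|_\infty\le 1$), whereas you project to $\mathbb{C}^{n-1}$ and use the $2$-norm with $\alpha=1+\delta$, $\delta<\varepsilon/\|y\|_2$; your explicit treatment of the ambient-space issue and the degenerate case $\pi_1(x)=0$ is more careful than the paper's, but the underlying argument is the same.
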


\begin{proof}
    If $n=1$, then $\mathbb{SL}_1^n = \{ 1 \}$ and the result is clear. 
    
    Otherwise, assume that $n > 1$ and, for contradiction, that $x \in \mathbb{E}_n$, but $x \notin \partial \mathbb{SL}_1^n$. By definition, $\exists \varepsilon > 0$ such that $N_\varepsilon(x) \coloneqq \{ y \in \mathbb{C}^n \mid \begin{Vmatrix} y - x \end{Vmatrix}_\infty < \varepsilon \} \subseteq \mathbb{SL}_1^n$. If $\alpha \coloneqq 1 + \varepsilon$ and 
    $$y \coloneqq \begin{bmatrix} 1 \\ \pi_1(\alpha x) \end{bmatrix} = \begin{bmatrix} 1 \\ \alpha x_2 \\ \vdots \\ \alpha x_n \end{bmatrix},$$ 
    then 
    $$y - x 
    = \begin{bmatrix} 0 \\ (\alpha-1) x_2 \\ \vdots \\ (\alpha-1) x_n \end{bmatrix} 
    = \varepsilon \begin{bmatrix} 0 \\ x_2 \\ \vdots \\ x_n \end{bmatrix}.$$
    Because $\begin{Vmatrix} x \end{Vmatrix}_\infty = 1$, it follows that $\begin{Vmatrix} \pi_1(x) \end{Vmatrix}_\infty \leqslant 1$ and $\begin{Vmatrix} y - x \end{Vmatrix}_\infty = \varepsilon \begin{Vmatrix} \pi_1(x) \end{Vmatrix}_\infty \leqslant \varepsilon$, i.e., $y \in N_\varepsilon(x)$. Since $\alpha > 1$, it follows that $x$ is not extremal, a contradiction. 
\end{proof}

% %--------------
% \begin{theorem}
%     If $n > 3$, then $\partial \mathbb{SL}_1^n \subseteq \mathbb{E}_n$.
% \end{theorem}

% \begin{proof}
%     Romanovsky \cite{r1936} proved that $\Theta_n$ intersects the unit-circle $\{ z \in \mathbb{C} \mid |z| = 1\}$ at the points $\{ \omega_q^p \mid p/q \in \mathcal{F}_n \}$. 
%     For contradiction, if $x \in \partial \mathbb{SL}_1^n$ and $x \notin \mathbb{E}_n$, then $\exists \alpha > 1$ such that $\alpha x \notin \mathbb{SL}_1^n$. By \thref{cyclicpoly}, $\conv(e,\alpha x, \ldots, \alpha^p x^p) \subseteq \mathbb{SL}_1^n$ whenever $p \geqslant 0$. 
% \end{proof}

%------------------
\begin{observation}
    If $x \in \mathbb{SL}_1^n$ and $x_k$ is extremal in $\Theta_n$, where $1 < k \leqslant n$, then $x \in \mathbb{E}_n$.
\end{observation}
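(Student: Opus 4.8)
The plan is a short argument by contradiction linking the two notions of extremality through the elementary observation that every coordinate of a point of $\mathbb{SL}^n$ lies in the Karpelevi{\v{c}} region $\Theta_n$. First I would fix $x = \begin{bmatrix} 1 & x_2 & \cdots & x_n \end{bmatrix}^\top \in \mathbb{SL}_1^n$ with $x_k$ extremal in $\Theta_n$ for some $k$ with $1 < k \leqslant n$, and suppose, for contradiction, that $x \notin \mathbb{E}_n$. By the definition of $\mathbb{E}_n$, this produces a scalar $\alpha > 1$ with $\alpha \pi_1(x) \in \pi_1(\mathbb{SL}_1^n)$.

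Next I would unwind that membership: it yields a point $y \in \mathbb{SL}_1^n$ with $\pi_1(y) = \alpha \pi_1(x)$, and since $y_1 = 1$ this forces $y = \begin{bmatrix} 1 & \alpha x_2 & \cdots & \alpha x_n \end{bmatrix}^\top$. Because $\mathbb{SL}_1^n \subseteq \mathbb{SL}^n$, there is a stochastic matrix $B \in \mathsf{M}_n(\mathbb{R})$ with $\spec(B) = \Lambda(y) = \{ 1, \alpha x_2, \dots, \alpha x_n \}$. In particular, since $1 < k \leqslant n$, the number $\alpha x_k$ is an eigenvalue of the $n$-by-$n$ stochastic matrix $B$, hence $\alpha x_k \in \Theta_n$. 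I would then invoke the extremality of $x_k$: by definition $\beta x_k \notin \Theta_n$ for every $\beta > 1$, so taking $\beta = \alpha$ contradicts $\alpha x_k \in \Theta_n$. This contradiction gives $x \in \mathbb{E}_n$.

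I do not anticipate a genuine obstacle; once the relevant definitions are placed side by side, the argument is pure bookkeeping. The only points demanding a moment's care are that $x_k$ (with $1 < k \leqslant n$) really is a coordinate of the truncated vector $\pi_1(x)$, so that $\alpha x_k$ is the corresponding coordinate of $\alpha \pi_1(x)$, and that membership in $\mathbb{SL}_1^n$ — not merely the relation $\alpha x_k \in \Theta_n$ — is what supplies an $n$-by-$n$ stochastic matrix whose \emph{entire} spectrum is $\Lambda(y)$. This last point is precisely why the implication has to be run in the direction above: knowing only that $\alpha x_k \in \Theta_n$ would give a stochastic matrix having $\alpha x_k$ among its eigenvalues but not one realizing $\{ 1, \alpha x_2, \dots, \alpha x_n \}$, so the argument does not reverse.
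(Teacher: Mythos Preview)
Your proof is correct and follows essentially the same route as the paper's own argument: assume $x\notin\mathbb{E}_n$, extract $\alpha>1$ with $\alpha\pi_1(x)\in\pi_1(\mathbb{SL}_1^n)$, obtain a stochastic matrix with spectrum $\{1,\alpha x_2,\dots,\alpha x_n\}$, and conclude $\alpha x_k\in\Theta_n$, contradicting the extremality of $x_k$. You have simply written out the bookkeeping in more detail than the paper does.
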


\begin{proof}
    For contradiction, if $x \notin \mathbb{E}_n$, then $\exists \alpha > 1$ such that $\pi_1(x) \in \pi_1(\mathbb{SL}_1^n)$. Thus, there is a stochastic matrix $A$ with spectrum $\{1, \alpha x_2, \ldots,\alpha_n x_n \}$. Consequently, $\alpha x_k \in \Theta_n$, a contradiction.
\end{proof}

%---------------------------
\section{Spectral Polyhedra}
%---------------------------

Although the following results are specified for complex matrices, many of the definitions and results apply, with minimal alteration, to real matrices. 

%-----------------
\begin{definition}
    \thlabel{spectratope}
    If $S \in \mathsf{GL}_{n}(\mathbb{C})$, then: 
    \begin{enumerate}
        [label=(\roman*)]
        \item $\mathcal{C}(S) \coloneqq \{ x \in \mathbb{C}^n \mid M_x \coloneqq S D_x S^{-1} \geqslant 0 \}$ is called the \emph{(Perron) spectracone} of $S$;
        % \item $\mathcal{C}_0(S) \coloneqq \{ x \in \mathcal{C}(S) \mid \langle x, e \rangle = e^\top x = 0 \}$ is called the \emph{trace-zero (Perron) spectracone} of $S$;;
        \item $\mathcal{P}(S) \coloneqq \{ x \in \mathcal{C}(S) \mid M_x e = e \}$ is called the \emph{(Perron) spectratope} of $S$; 
        % \item $\mathcal{P}_0(S) \coloneqq \{ x \in \mathcal{P}(S) \mid \langle x, e \rangle = e^\top x = 0 \}$ is called the \emph{trace-zero (Perron) spectratope} of $S$; and 
        \item $\mathcal{A}(S) \coloneqq \{ M_x \in \mathsf{M}_n(\mathbb{R}) \mid x \in \mathcal{C}(S) \}$. 
    \end{enumerate}
\end{definition}

%----------
\begin{remark}
    Although the spectratope definitions that appeared in the literature previously (\cite[Definition 3.5]{jp2016} and \cite[p.~114]{dppt2022}) differ from what appears in \thref{spectratope}, the definition above subsumes the previous definitions and captures the notion in its fullest generality. 
\end{remark}

%------------------
\begin{observation}
    \thlabel{prodstochequalsstoch}
    The product of stochastic matrices is stochastic. 
\end{observation}

% \begin{proof}
% If $A,B \geqslant 0$ and $Ae = e = Be$, then $AB \geqslant 0$ and $(AB)e = A(Be) = Ae = e$.    
% \end{proof}

%--------------
\begin{theorem}
    \thlabel{hadamardcones}
    If $S \in \mathsf{GL}_{n}(\mathbb{C})$, then: 
    \begin{enumerate}
        [label=(\roman*)]
        \item \label{CShadamard} $\mathcal{C}(S)$ is a nonempty convex cone that is closed with respect to the Hadamard product; 
        \item $\mathcal{P}(S)$ is a nonempty convex set that is closed with respect to the Hadamard product; and 
        \item $\mathcal{A}(S)$ is a nonempty convex cone that is closed with respect to matrix multiplication.
    \end{enumerate}
\end{theorem}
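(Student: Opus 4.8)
The plan is to verify each of the three closure/structure properties by direct manipulation of the map $x \mapsto M_x = S D_x S^{-1}$, exploiting the fact that this map is linear in $x$ (since $D_{\alpha x + \beta y} = \alpha D_x + \beta D_y$) and intertwines the Hadamard product on vectors with a bilinear operation on matrices. For nonemptiness in all three parts, I would first observe that $\mathcal{C}(S)$ always contains a distinguished element: because $S$ is invertible, one column of $S$, say $r := S e_j$ (rescaled to be a positive Perron-type vector), together with a corresponding left vector, can be used to build a nonnegative (indeed positive or irreducible) matrix diagonalized by $S$; more simply, since $S$ diagonalizes $S D_x S^{-1}$ for \emph{every} $x$, and the set of $x$ for which this is entrywise nonnegative is cut out by finitely many real-linear inequalities on the real and imaginary parts of $x$, I need only exhibit one such $x$. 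The cleanest choice is to note that if $v$ is any vector making $S D_v S^{-1}$ a positive matrix (which exists by a Perron-similarity/continuity argument, or can simply be invoked from the surrounding development where Perron similarities are defined precisely as diagonalizers of irreducible nonnegative matrices), then $\mathcal{C}(S) \neq \varnothing$; and $M_v$ being nonnegative with positive Perron eigenvalue allows rescaling so that its Perron eigenvector is $e$, placing a point in $\mathcal{P}(S)$ and a matrix in $\mathcal{A}(S)$.

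For the convex-cone statements in (i) and (iii): if $x, y \in \mathcal{C}(S)$ and $\alpha, \beta \geqslant 0$, then $M_{\alpha x + \beta y} = S D_{\alpha x + \beta y} S^{-1} = \alpha S D_x S^{-1} + \beta S D_y S^{-1} = \alpha M_x + \beta M_y \geqslant 0$, so $\alpha x + \beta y \in \mathcal{C}(S)$; this simultaneously gives that $\mathcal{A}(S)$ is closed under nonnegative linear combinations, hence a convex cone. For (ii), if $x, y \in \mathcal{P}(S)$ and $\alpha \in [0,1]$, $\beta := 1 - \alpha$, then $\alpha x + \beta y \in \mathcal{C}(S)$ by the cone property and $M_{\alpha x + \beta y} e = \alpha M_x e + \beta M_y e = \alpha e + \beta e = e$, so $\mathcal{P}(S)$ is convex (this is essentially \thref{stochasticconvex} read through the similarity). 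For closure under the Hadamard product in (i): the key algebraic identity is that $S D_{x \circ y} S^{-1} = (S D_x S^{-1})(S D_y S^{-1})$ is \emph{false} in general, so instead I would use that $D_{x \circ y} = D_x D_y$ and therefore $M_{x \circ y} = S D_x D_y S^{-1} = (S D_x S^{-1})(S D_y S^{-1}) = M_x M_y$ — wait, this identity $S D_x D_y S^{-1} = (SD_xS^{-1})(SD_yS^{-1})$ \emph{is} valid since the middle $S^{-1}S$ cancels; so $M_{x\circ y} = M_x M_y$, the product of two nonnegative matrices, which is nonnegative. Hence $x \circ y \in \mathcal{C}(S)$, and this same identity shows $\mathcal{A}(S)$ is closed under matrix multiplication. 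For (ii), if additionally $M_x e = M_y e = e$, then $M_{x \circ y} e = M_x M_y e = M_x e = e$, so $x \circ y \in \mathcal{P}(S)$ (alternatively invoke \thref{prodstochequalsstoch}).

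The main obstacle is really the nonemptiness claim, since the convexity and Hadamard/multiplicative closure are formal consequences of linearity of $x \mapsto D_x$ and the identity $D_x D_y = D_{x \circ y}$. To handle nonemptiness cleanly I would argue: the vector $e$ (all-ones) corresponds to $M_e = S I S^{-1} = I \geqslant 0$, so $e \in \mathcal{C}(S)$ unconditionally, giving $\mathcal{C}(S) \neq \varnothing$ and $I = M_e \in \mathcal{A}(S) \neq \varnothing$ for free; and since $M_e = I$ satisfies $I e = e$, we get $e \in \mathcal{P}(S) \neq \varnothing$ as well. Thus no appeal to the Perron structure is needed for nonemptiness — the identity matrix is the universal witness — and the proof reduces entirely to the three routine verifications above. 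I would write it up in exactly that order: nonemptiness via $x = e$, then the cone/convexity properties via linearity, then Hadamard/multiplicative closure via $M_{x\circ y} = M_x M_y$.
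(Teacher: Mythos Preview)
Your proposal is correct and, once you discard the false starts (the unnecessary Perron-similarity detour for nonemptiness and the momentary hesitation about the identity $M_{x\circ y}=M_xM_y$), it is essentially identical to the paper's proof: nonemptiness via $M_e=I$, the convex-cone property via linearity of $x\mapsto D_x$, and Hadamard/multiplicative closure via $D_{x\circ y}=D_xD_y$ yielding $M_{x\circ y}=M_xM_y$. The paper organizes it the same way, citing \thref{stochasticconvex} and \thref{prodstochequalsstoch} for part (ii) exactly as you suggest.
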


\begin{proof}
Since $M_e = S D_e S^{-1} = S I_n S^{-1} = I_n \geqslant 0$ and $I_n$ is stochastic, it follows that $e \in \mathcal{P}(S) \subset \mathcal{C}(S)$ and all three sets are nonempty. 

\begin{enumerate}
    [label=(\roman*)]
    \item If $x, y \in \mathcal{C}(S)$ and $\alpha$, $\beta \geqslant 0$, then 
    \begin{equation}
        \label{convexcombo}
        M_{\alpha x + \beta y} = SD_{\alpha x + \beta y} S^{-1} = S(\alpha D_x + \beta D_y)S^{-1} = \alpha M_x + \beta M_y \geqslant 0,
    \end{equation} 
    i.e., $\mathcal{C}(S)$ is a convex cone. 
    
    Furthermore, 
    \begin{equation}
        \label{hadamardprod}
        M_{x\circ y} = SD_{x\circ y} S^{-1} = S D_x D_y S^{-1} = (S D_x S^{-1})(S D_y S^{-1}) = M_x M_y \geqslant 0,
    \end{equation}
    i.e., the convex cone $\mathcal{C}(S)$ is closed with respect to the Hadamard product.

    \item The convexity of $\mathcal{P}(S)$ follows from \eqref{convexcombo} and \thref{stochasticconvex}; closure with respect to the Hadamard product is a consequence of \eqref{hadamardprod} and \thref{prodstochequalsstoch}.  

    \item Follows from \eqref{convexcombo} and \eqref{hadamardprod}. \qedhere
\end{enumerate}
\end{proof}

%-------------
\begin{remark}
    If $\mathcal{C}(S) = \coni(e)$, $\mathcal{P}(S) = \{e\}$, or $\mathcal{A}(S) = \coni(I_n)$, then $\mathcal{C}(S)$, $\mathcal{P}(S)$, and $\mathcal{A}(S)$ are called \emph{trivial}; otherwise, they are called \emph{nontrivial}.
\end{remark}

Before we prove our next result, we note the following: if $x \in \mathbb{C}^n$, then 
$$\Re x \coloneqq 
\begin{bmatrix}
    \Re x_1 \\
    \vdots \\
    \Re x_n
\end{bmatrix} \in \mathbb{R}^n$$
and 
$$\Im x \coloneqq 
\begin{bmatrix}
    \Im x_1 \\
    \vdots \\
    \Im x_n
\end{bmatrix} \in \mathbb{R}^n.$$ 
Since $\ii x = -\Im x + \ii \Re x$ and $-\ii x = \Im x - \ii \Re x$, it follows that 
    \begin{equation}
        \label{realandimag}
        \Im x = 0 \iff (\Re(\ii x) 	\geqslant 0) \wedge (\Re(-\ii x) \geqslant 0).    
    \end{equation}
Similarly, if $A \in \mathsf{M}_n(\mathbb{C})$, then $\Re A \coloneqq \begin{bmatrix} \Re a_{ij} \end{bmatrix} \in \mathsf{M}_n(\mathbb{R})$ and $\Im A \coloneqq \begin{bmatrix} \Im a_{ij} \end{bmatrix} \in \mathsf{M}_n(\mathbb{R})$. 

%--------------
\begin{theorem}
    \thlabel{CSpolyconePSpolytope}
        If $S \in \mathsf{GL}_n(\mathbb{C})$, then $\mathcal{C}(S)$ is a polyhedral cone and $\mathcal{P}(S)$ is a polytope. 
\end{theorem}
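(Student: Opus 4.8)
The plan is to express $\mathcal{C}(S)$ as a finite intersection of closed half-spaces through the origin, which is precisely the definition of a polyhedral cone, and then to obtain $\mathcal{P}(S)$ as the intersection of $\mathcal{C}(S)$ with finitely many affine hyperplanes, together with the fact that it is bounded.

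First I would unwind the condition $M_x = SD_xS^{-1} \geqslant 0$. By the convention recalled just before the statement, $M_x \geqslant 0$ means $\Re M_x \geqslant 0$ entrywise \emph{and} $\Im M_x = 0$. Now observe that the $(i,j)$-entry of $M_x$ is a fixed $\mathbb{C}$-linear functional of $x$: writing $S = [s_{ij}]$ and $S^{-1} = [\hat{s}_{ij}]$, we have $[M_x]_{ij} = \sum_{k} s_{ik}\hat{s}_{kj} x_k = \langle x, \overline{a_{ij}} \rangle$ for the vector $a_{ij} \in \mathbb{C}^n$ with $k$-th entry $\overline{s_{ik}\hat{s}_{kj}}$ (I would match signs carefully with the paper's inner-product convention $\langle x,y\rangle = y^*x$, so that $[M_x]_{ij} = \langle x, b_{ij}\rangle$ for a suitable $b_{ij}$). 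Then $\Re [M_x]_{ij} \geqslant 0$ is exactly the half-space $\mathsf{H}(b_{ij})$, and by \eqref{realandimag} the constraint $\Im [M_x]_{ij} = 0$ is equivalent to the pair of half-space conditions $\Re(\ii \langle x, b_{ij}\rangle) \geqslant 0$ and $\Re(-\ii\langle x, b_{ij}\rangle) \geqslant 0$, i.e. $\mathsf{H}(\ii b_{ij}) \cap \mathsf{H}(-\ii b_{ij})$ (absorbing the $\ii$ into the defining vector via conjugate-linearity of the second slot). Hence
\[ \mathcal{C}(S) = \bigcap_{i,j} \Big( \mathsf{H}(b_{ij}) \cap \mathsf{H}(\ii b_{ij}) \cap \mathsf{H}(-\ii b_{ij}) \Big), \]
a finite intersection of half-spaces through the origin, so $\mathcal{C}(S)$ is a polyhedral cone. (One should note none of the $b_{ij}$ associated to the $\Re$-nonnegativity constraints can be taken to be $0$ without losing nothing, but this is harmless: if some $b_{ij}=0$ the corresponding constraint is vacuous and can be dropped, and $S$ invertible guarantees $\mathcal{C}(S)$ is cut out by at least enough genuine constraints; in any case $\mathcal{C}(S)$ remains an intersection of finitely many half-spaces of the required form.)

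For $\mathcal{P}(S)$, the extra condition is $M_x e = e$. Each coordinate $[M_x e]_i = e$ reads $\sum_j [M_x]_{ij} = 1$, which is again a single $\mathbb{C}$-linear equation $\langle x, c_i \rangle = 1$ in $x$ (with $c_i = \sum_j b_{ij}$), hence equivalent to the two half-space conditions $\Re\langle x, c_i\rangle \geqslant 1$ and $\Re\langle x, -c_i\rangle \geqslant -1$, i.e. $\mathsf{H}(c_i,1)\cap\mathsf{H}(-c_i,-1)$; the imaginary part of $\langle x,c_i\rangle$ is already forced to be real-consistent once we intersect with $\mathcal{C}(S)$, but in any event we may simply add the pair $\mathsf{H}(\ii c_i)\cap\mathsf{H}(-\ii c_i)$ shifted appropriately, or observe $\langle x,c_i\rangle = 1$ splits into $\Re = 1$ and $\Im = 0$ as above. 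Either way $\mathcal{P}(S) = \mathcal{C}(S)\cap(\text{finitely many half-spaces})$ is a polyhedron. Finally, $\mathcal{P}(S)$ is bounded: if $x \in \mathcal{P}(S)$ then $M_x$ is a (row-)stochastic matrix, so $\rho(M_x) = 1$, and the entries of $x$ are eigenvalues of $M_x$, whence $\|x\|_\infty \leqslant 1$, i.e. $\mathcal{P}(S) \subseteq B^n$. A bounded polyhedron is a polytope, completing the proof.

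The main obstacle is purely bookkeeping rather than conceptual: getting the conjugations straight so that "$\Re$ of a complex-linear functional $\geqslant 0$" and "$\Im$ of a complex-linear functional $= 0$" are correctly rendered in the paper's half-space notation $\mathsf{H}(a,b) = \{x : \Re\langle a,x\rangle \geqslant b\}$, and in particular verifying that multiplication by $\pm\ii$ inside the functional corresponds to a legitimate choice of defining vector (which it does, since $\Re\langle \ii a, x\rangle = \Re(\ii\langle a,x\rangle) = -\Im\langle a,x\rangle$, up to the sign conventions one must pin down once). The boundedness of $\mathcal{P}(S)$ is immediate from the Perron--Frobenius/stochastic observation that $\rho(M_x)=1$ together with $\spec(M_x) = \Lambda(x)$, and everything else is a direct application of the definitions of polyhedral cone and polytope given in the excerpt.
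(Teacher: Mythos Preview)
Your proposal is correct and follows essentially the same approach as the paper: write each entry $[M_x]_{ij}$ as $\langle x, \overline{s_i\circ t_j}\rangle$, convert the real-part and imaginary-part constraints into half-spaces $\mathsf{H}(\cdot)$, $\mathsf{H}(\pm\ii\cdot)$ via \eqref{realandimag}, and for $\mathcal{P}(S)$ add the affine constraints coming from $M_xe=e$ together with boundedness from $\rho(M_x)=1$. The only differences are cosmetic (the paper writes the defining vectors explicitly as $\overline{s_i\circ t_j}$ and $\overline{s_i\circ Te}$, and records $\|x\|_\infty=1$ rather than $\leqslant 1$), so your outline matches the paper's proof.
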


\begin{proof}
    In what follows, we let $t_{ij}$ denote the $(i,j)$-entry of $S^{-1}$. Via the mechanics of matrix multiplication and the complex inner product, 
    \begin{equation}
        \label{Mijentry}
        \left[ M_x \right]_{ij} 
        = \left[ SD_xS^{-1} \right]_{ij} 
        = \sum_{k=1}^n \left( s_{ik} t_{kj} \right) x_k 
        = \left( s_i \circ t_j \right)^\top x 
        = \langle x,\overline{s_i \circ t_j} \rangle,
    \end{equation}
    where $s_i$ denotes the $i\textsuperscript{th}$-row of $S$ (as a column vector) and $t_j$ denotes the $j\textsuperscript{th}$-column of $S^{-1}$. Consequently,
    \begin{align*}
        x \in \mathcal{C}(S) 
        &\iff (\Re M_x \geqslant 0) \wedge(\Im M_x = 0)                         \\ 
        &\iff   
        x \in \mathsf{H}\left(\overline{s_i \circ t_j}\right) \cap 
        \mathsf{H}\left(\ii \cdot \overline{s_i \circ t_j}\right) \cap  
        \mathsf{H}\left(-\ii \cdot\overline{s_i \circ t_j}\right), \ \forall (i,j) \in \bracket{n}^2. 
    \end{align*}

    Via the mechanics of matrix multiplication and the complex inner product, notice that
    \begin{align*} 
    \left[ M_x e \right]_i 
    = \sum_{j=1}^n \sum_{k=1}^n \left( s_{ik} t_{kj} \right) x_k      
    = \sum_{k=1}^n \left( s_{ik} \cdot \sum_{j=1}^n t_{kj} \right) x_k 
    = \left( s_i \circ Te \right)^\top x
    = \langle x, \overline{s_i \circ Te} \rangle. 
    \end{align*}
    Thus, $x \in \mathcal{P}(S)$ if and only if 
    \[ x \in \mathsf{H}\left(\overline{s_i \circ t_j}\right) \cap 
            \mathsf{H}\left(\ii \cdot \overline{s_i \circ t_j}\right) \cap  
            \mathsf{H}\left(-\ii \cdot\overline{s_i \circ t_j}\right), \ \forall (i,j) \in \bracket{n}^2 \]
    and 
    \[ x \in 
    \mathsf{H}\left( \overline{s_i \circ Te},1 \right) \cap 
    \mathsf{H}\left( \overline{s_i \circ Te},-1 \right) \cap
    \mathsf{H}\left(  \ii\cdot \overline{s_i \circ Te} \right) \cap
    \mathsf{H}\left(  -\ii\cdot \overline{s_i \circ Te} \right). \]
    Thus, $\mathcal{P}(S)$ is a polyhedron and since $\vert\vert x \vert\vert_\infty = 1$, it follows that $\mathcal{P}(S)$ is a polytope. 
\end{proof}

%--------------
\begin{theorem}
    If $x,y \in \mathcal{C}(S)$, then $\theta(x,y) \in [0,\pi/2]$.
\end{theorem}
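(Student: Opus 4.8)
The statement is equivalent, via \eqref{complexangle}, to the single inequality $\Re\langle x,y\rangle\geqslant 0$ (the cases $x=0$ or $y=0$ give angle $\pi/2\in[0,\pi/2]$ and are trivial). So I would fix $x,y\in\mathcal{C}(S)\setminus\{0\}$ and aim for $\Re\langle x,y\rangle\geqslant 0$.

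The natural first move is to convert the inner product into a trace. By \eqref{hadamardprod} one has $M_aM_b=M_{a\circ b}$ for all $a,b\in\mathbb{C}^n$ (this uses only the definition of $M$, not nonnegativity), and the matrices $M_x,M_{\overline y}$ commute, so
\[ \langle x,y\rangle=\sum_{k=1}^{n}x_k\overline{y_k}=\trace\!\big(SD_{x\circ\overline y}S^{-1}\big)=\trace\!\big(M_xM_{\overline y}\big). \]
Here $M_x=SD_xS^{-1}\geqslant 0$ by hypothesis, but the subtlety is that $M_{\overline y}=SD_{\overline y}S^{-1}$ is in general \emph{not} nonnegative ($\mathcal{C}(S)$ need not be closed under complex conjugation), so one cannot conclude $\trace(M_xM_{\overline y})\geqslant 0$ by a product‑of‑nonnegatives argument. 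What does survive is that $M_y$ is a real matrix, hence $\spec(M_y)=\Lambda(y)$ is self‑conjugate; therefore $\overline y$ is a permutation of $y$, the matrix $M_{\overline y}$ is \emph{real} and similar to $M_y$ (so $\trace(M_xM_{\overline y})$ is real, which already gives $\Re\langle x,y\rangle=\langle x,y\rangle$), and the similarity may be taken to be a real involution fixing the positive eigenvector common to every $M_z$, $z\in\mathcal{C}(S)$.

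The plan from there is to combine the self‑conjugacy of $\Lambda(y)$ — which lets one rewrite $\sum_k x_k\overline{y_k}$ by sending each $y_k$ to its conjugate partner within $\Lambda(y)$ — with the positivity inherited from the simultaneous diagonalization by $S$: by \thref{hadamardcones} every $x^{\circ a}\circ y^{\circ b}$ again lies in $\mathcal{C}(S)\subseteq\mathbb{L}^n$, so $\trace(M_x^{a}M_y^{b})=\sum_k x_k^{\,a}y_k^{\,b}\geqslant 0$ for all $a,b\in\mathbb{N}_0$ (by \eqref{trnn}), and the identity $[M_x]_{ij}=\langle x,\overline{s_i\circ t_j}\rangle\geqslant 0$ from the proof of \thref{CSpolyconePSpolytope} places $x$ in each half‑space $\mathsf{H}(\overline{s_i\circ t_j})$. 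It is likely cleanest to first pass to extreme rays — Farkas--Minkowski--Weyl applies because the preceding theorem makes $\mathcal{C}(S)$ a polyhedral cone — reducing the theorem, by $\mathbb{R}_{\geqslant 0}$‑bilinearity of $\Re\langle\cdot,\cdot\rangle$, to the finitely many inequalities $\Re\langle r_a,r_b\rangle\geqslant 0$ between extreme rays, where each $M_{r_a}$ is extremal in $\mathcal{A}(S)$ and hence has a highly constrained zero pattern. The main obstacle I anticipate is exactly this last implication: showing that the plainly \emph{necessary} conditions (self‑conjugacy of $\Lambda(y)$, nonnegativity of all $M_x^{a}M_y^{b}$, and nonnegativity of the entries $[M_x]_{ij}$) are in fact \emph{sufficient} to force $\langle x,y\rangle\geqslant 0$ — this is the step at which $\mathcal{C}(S)$ must genuinely be used as a Perron spectracone rather than as an arbitrary Hadamard‑closed subcone of $\mathbb{L}^n$, and it is where essentially all of the content lies.
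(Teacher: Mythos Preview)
Your proposal is not a proof: you identify the reduction to $\Re\langle x,y\rangle\geqslant 0$, rewrite $\langle x,y\rangle=\trace(M_xM_{\overline y})$, and then correctly flag that $M_{\overline y}\geqslant 0$ (equivalently $\overline y\in\mathcal{C}(S)$) is the crux --- but your plan from there (moment inequalities $\sum_k x_k^{\,a}y_k^{\,b}\geqslant 0$, half-space constraints $[M_x]_{ij}\geqslant 0$, reduction to extreme rays) never produces the inequality $\Re\langle x,y\rangle\geqslant 0$. You say as much yourself in the final paragraph: the step ``where essentially all of the content lies'' is left open. None of the ingredients you list --- closure of $\Lambda(y)$ under conjugation, nonnegativity of $\trace(M_x^aM_y^b)$, or extremality in $\mathcal{A}(S)$ --- gives any obvious handle on $\sum_k x_k\overline{y_k}$ as opposed to $\sum_k x_ky_k$, and you do not indicate how they would combine.

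By contrast, the paper's argument is two lines and goes through exactly the door you decided was locked: it \emph{asserts} $\overline y\in\mathcal{C}(S)$, deduces $x\circ\overline y\in\mathcal{C}(S)$ from Hadamard closure (\thref{hadamardcones}), rewrites $\langle x,y\rangle=\langle x\circ\overline y,\,e\rangle$, and invokes \thref{SLangle}. So the entire theorem is made to rest on the claim you singled out as suspect. Your instinct that this claim is not automatic for an arbitrary $S\in\mathsf{GL}_n(\mathbb{C})$ is well founded; the paper offers no justification for it at this point, and the transformation rules (\thref{conetransforms}) that could conceivably be used to argue $\overline{\mathcal{C}(S)}=\mathcal{C}(S)$ appear only later. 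The upshot is that you and the paper diverge precisely at the one nontrivial step: the paper takes $\overline y\in\mathcal{C}(S)$ for granted, while you refuse it and then fail to find a substitute. If you accept that closure claim, the proof is immediate and your extreme-ray machinery is unnecessary; if you do not, you have not supplied an alternative.
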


\begin{proof}
    If $x,y \in \mathcal{C}(S)$, then, since $\overline{y} \in \mathcal{C}(S)$, it follows that $x \circ \overline{y} \in \mathcal{C}(S)$ by part \ref{CShadamard} of \thref{hadamardcones}. Since
    \[ \langle x, y \rangle = y^\ast x = \sum_{k=1}^n \overline{y_k} \cdot x_k = \sum_{k=1}^n 1 \left( x_k \cdot \overline{y_k} \right) = e^\top (x \circ \overline{y}) = e^\ast (x \circ \overline{y}) = \langle x\circ \overline{y}, e \rangle, \]
    the result follows from \thref{SLangle}.
\end{proof}

%---------------------------------
\subsection{Basic Transformations}

As detailed in the sequel, the set $\mathcal{C}(S)$ is unchanged, or changes predictably, by certain basic transformations of $S$.

The following result is a routine exercise.

%------------
\begin{lemma}
    \thlabel{nonnegsims}
    If $A \in \mathsf{M}_n(\mathbb{R})$, $P$ is a permutation matrix, and $v > 0$, then  
        \begin{enumerate}
        [label=(\roman*)]
            \item $A \geqslant 0$ if and only if $P A P^\top \geqslant 0$; and 
            \item $A \geqslant 0$ if and only if $D_v A D_{v^{-1}} \geqslant 0$. 
        \end{enumerate}
\end{lemma}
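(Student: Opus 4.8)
The plan is to argue both equivalences entrywise, observing that each of the two transformations merely permutes the entries of $A$ or rescales them by strictly positive factors, and in either case nonnegativity is preserved in both directions. Since the ``only if'' and ``if'' directions are symmetric (the inverse transformation is of the same form), it suffices to prove one implication in each part and then invoke the inverse map.

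For part (i), I would let $\sigma \in \mathsf{Sym}(n)$ be the permutation with $P = P_\sigma$ and compute the $(i,j)$-entry of $P A P^\top$ directly from the definition $[P_\sigma]_{k\ell} = \delta_{\sigma(k),\ell}$, obtaining $[PAP^\top]_{ij} = a_{\sigma(i)\sigma(j)}$. Thus the entries of $PAP^\top$ are exactly the entries of $A$, reindexed by the bijection $(i,j) \mapsto (\sigma(i),\sigma(j))$ of $[n]^2$. Hence $PAP^\top \geqslant 0$ whenever $A \geqslant 0$; applying this with $P$ replaced by $P^\top = P_{\sigma^{-1}}$ (which is again a permutation matrix) to the matrix $PAP^\top$ recovers $A$ and gives the converse.

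For part (ii), I would compute $[D_v A D_{v^{-1}}]_{ij} = v_i\, a_{ij}\, v_j^{-1}$, using that $(D_v)^{-1} = D_{v^{-1}}$ (noted in the preliminaries, since $v$ is totally nonzero). Because $v > 0$ entrywise, both $v_i > 0$ and $v_j^{-1} > 0$, so $v_i a_{ij} v_j^{-1} \geqslant 0 \iff a_{ij} \geqslant 0$. This yields one direction; the converse follows by replacing $v$ with $v^{-1}$ (still a positive vector) and applying the same argument to $D_v A D_{v^{-1}}$, since $D_{v^{-1}}(D_v A D_{v^{-1}})D_v = A$.

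There is no real obstacle here: the only point requiring a little care is fixing the index bookkeeping in part (i) so that the formula $[PAP^\top]_{ij} = a_{\sigma(i)\sigma(j)}$ is consistent with the paper's convention $P_\sigma x = \sigma(x)$, $[\sigma(x)]_k = x_{\sigma(k)}$; once that is pinned down, both parts are immediate.
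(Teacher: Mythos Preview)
Your proposal is correct and is exactly the kind of routine entrywise argument intended; the paper itself omits the proof, stating only that the result ``is a routine exercise.'' Your index bookkeeping in part (i) matches the paper's convention $[P_\sigma]_{k\ell}=\delta_{\sigma(k),\ell}$, and the symmetry observation (that the inverse transformation has the same form) cleanly handles both directions.
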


    % \begin{proof}
    % The necessity of both the statements is clear since a finite product of nonnegative matrices is nonnegative.
    %     \begin{enumerate}
    %         [label=(\roman*)]
    %         \item To demonstrate sufficiency, suppose that the matrix $M \coloneqq P A P^\top \geqslant 0$ and, for contradiction, that $a_{ij} < 0$. If $\sigma$ is the permutation corresponding to $P$, then 
    %         \[ P = 
    %         \begin{bmatrix} 
    %         e_{\sigma(1)}^\top  \\
    %         \vdots              \\
    %         e_{\sigma(n)}^\top
    %         \end{bmatrix}. \]
    %         Setting $k = \sigma^{-1}(i)$ and $\ell = \sigma^{-1}(j)$ yields
    %         \[ m_{k\ell} = e_k^\top M e_\ell = (P^\top e_k)^\top A P^\top e_\ell = e_{\sigma(k)}^\top A e_{\sigma(\ell)} = a_{\sigma(k),\sigma(\ell)} = a_{ij} < 0, \]
    %         a contradiction. 
        
    %         \item For sufficiency, suppose that $D_v A D_v^{-1} \geqslant 0$ and, for contradiction, that $a_{ij} < 0$. Since $v > 0$, it follows that
    %         \[ \left[ D_v A D_v^{-1} \right]_{ij} = \frac{v_ia_{ij}}{v_j} < 0 \] 
    %         a contradiction.  \qedhere
    %     \end{enumerate}
    % \end{proof}

The \emph{relative gain array} was used to give a short proof of the following useful result \cite[Lemma 3.3]{jp2016}.

%------------
\begin{lemma}
    \thlabel{jplemma}
    If $P = P_\sigma$ is a permutation matrix and $x \in \mathbb{C}^n$, then $P D_x P^\top = D_{\sigma(x)}$.
\end{lemma}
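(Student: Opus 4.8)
The plan is to reduce the identity to the rank-one decomposition of a diagonal matrix, which sidesteps any index juggling with products of the entries $\delta_{\sigma(i),j}$. First I would record the one fact about $P_\sigma$ that the argument needs: $P_\sigma e_k = e_{\sigma^{-1}(k)}$ for every $k \in \bracket{n}$. This follows immediately from the defining relation $[P_\sigma]_{ij} = \delta_{\sigma(i),j}$, since the $i\textsuperscript{th}$ entry of the $k\textsuperscript{th}$ column of $P_\sigma$ is $\delta_{\sigma(i),k}$, which equals $1$ precisely when $i = \sigma^{-1}(k)$. (One should double-check consistency with the paper's convention $Px = \sigma(x)$, $[\sigma(x)]_k = x_{\sigma(k)}$, which holds: $(Px)_i = \sum_j \delta_{\sigma(i),j} x_j = x_{\sigma(i)}$.)

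Next I would write $D_x = \sum_{k=1}^n x_k\, e_k e_k^\top$ and use $P^\top = P^{-1}$ together with bilinearity to obtain
\[ P D_x P^\top = \sum_{k=1}^n x_k\, (P e_k)(P e_k)^\top = \sum_{k=1}^n x_k\, e_{\sigma^{-1}(k)} e_{\sigma^{-1}(k)}^\top. \]
Reindexing the sum via $j = \sigma^{-1}(k)$, equivalently $k = \sigma(j)$ — legitimate because $\sigma$ is a bijection of $\bracket{n}$ — gives $\sum_{j=1}^n x_{\sigma(j)}\, e_j e_j^\top$, and since $[\sigma(x)]_j = x_{\sigma(j)}$ this is exactly $D_{\sigma(x)}$. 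An equivalent, entirely mechanical alternative (the ``relative gain array'' route alluded to just before the statement) is to compute $[P D_x P^\top]_{ij} = \sum_{k} \delta_{\sigma(i),k}\, x_k\, \delta_{\sigma(j),k}$ directly: the off-diagonal terms vanish because they would require $\sigma(i) = \sigma(j)$ with $i \ne j$, while the $(i,i)$-entry collapses to $x_{\sigma(i)}$.

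There is no substantive obstacle here; the only thing to be careful about is the direction of the permutation, namely that $P_\sigma e_k = e_{\sigma^{-1}(k)}$ rather than $e_{\sigma(k)}$, and that the final reindexing therefore produces $x_{\sigma(j)}$ (and not $x_{\sigma^{-1}(j)}$) on the diagonal — which is precisely the entry pattern of $D_{\sigma(x)}$ under the paper's conventions.
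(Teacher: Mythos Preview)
Your argument is correct: the rank-one expansion $D_x = \sum_k x_k e_k e_k^\top$, the identification $P_\sigma e_k = e_{\sigma^{-1}(k)}$, and the reindexing $j = \sigma^{-1}(k)$ all check out under the paper's conventions, and the entrywise alternative you sketch is equally valid.

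The paper itself does not supply a proof; it simply cites \cite[Lemma~3.3]{jp2016}, where the identity is derived via the \emph{relative gain array} $\Phi(A) \coloneqq A \circ A^{-\top}$ (the relevant fact being that $\Phi$ is invariant under one-sided diagonal scaling and interacts predictably with permutations). Your approach is more elementary and entirely self-contained: it needs nothing beyond the action of $P_\sigma$ on the standard basis and a change of summation index, whereas the cited proof imports a small piece of machinery that pays off elsewhere but is overkill for this particular identity. One minor remark: what you call ``the relative gain array route'' in your second paragraph is really just the direct entrywise computation, not the actual RGA argument from \cite{jp2016}; the two happen to coincide in spirit here because $P$ is a $\{0,1\}$-matrix, but they are not the same thing in general.
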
 

%--------------
\begin{theorem}
    \thlabel{conetransforms}
        If $S \in \mathsf{GL}_n(\mathbb{C})$, $P = P_\sigma$ is a permutation matrix, and $v$ is a totally nonzero vector, then  
        \begin{enumerate}[label=(\roman*)]
        \item \label{PS} $\mathcal{C}(PS) = \mathcal{C}(S)$; 
        \item \label{SP} $\mathcal{C}(SP) = \sigma^{-1} (\mathcal{C}(S)) \coloneqq \{ x \in \mathbb{C}^n \mid x = \sigma^{-1} (y),~ y\in \mathcal{C}(S) \}$;    
        \item \label{DvS} $\mathcal{C}(D_v S) = \mathcal{C}(S)$, $\forall v > 0$; 
        \item \label{SDv} $\mathcal{C}(S D_v) = \mathcal{C}(S)$; 
        \item \label{alphaS} $\mathcal{C}(\alpha S) = \mathcal{C}(S)$, $\forall \alpha \ne 0$;
        \item \label{Sconj} $\mathcal{C}(\bar{S}) = \overline{\mathcal{C}(S)} \coloneqq \{ y \in \mathbb{C}^n \mid y = \bar{x},~x\in \mathcal{C}(S) \}$;  
        \item \label{Sinv} $\mathcal{C}(S^{-1}) = \mathcal{C}(S^\top)$. In particular, $\mathcal{C}(S) = \mathcal{C} (S^{-\top})$, where $S^{-\top} \coloneqq (S^\top)^{-1} = (S^{-1})^\top$; and
        \item \label{Sast} $\mathcal{C}(S^\ast) = \overline{\mathcal{C}(S^{-1})}$. In particular, $\mathcal{C}(S) = \overline{\mathcal{C}(S^{-\ast})}$, where $S^{-\ast} \coloneqq (S^\ast)^{-1} = (S^{-1})^\ast$. 
        \end{enumerate}
\end{theorem}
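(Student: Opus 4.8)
The plan is to dispatch all eight identities by one common device: for each transformed similarity $\hat{S}$, write $\hat{S} D_x \hat{S}^{-1}$ explicitly in terms of $M_x = S D_x S^{-1}$ (or in terms of $M_y$ for a suitably permuted/conjugated $y$), and then invoke one of the structural facts already in hand: \thref{nonnegsims} for a similarity by a permutation matrix or a positive diagonal matrix, \thref{jplemma} for the action of a permutation on a diagonal matrix, and the two triviali\-ties that a real matrix is entrywise nonnegative if and only if its transpose is, and if and only if its entrywise conjugate is. No appeal to the polyhedral structure from \thref{CSpolyconePSpolytope} is needed; everything is at the level of Definition \thref{spectratope}.

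For \ref{PS}, I would write $(PS) D_x (PS)^{-1} = P(SD_xS^{-1})P^\top = P M_x P^\top$ and apply \thref{nonnegsims}(i). For \ref{SP}, $(SP)D_x(SP)^{-1} = S(PD_xP^\top)S^{-1} = S D_{\sigma(x)} S^{-1} = M_{\sigma(x)}$ by \thref{jplemma}, so $x \in \mathcal{C}(SP)$ iff $\sigma(x) \in \mathcal{C}(S)$, which is precisely $x \in \sigma^{-1}(\mathcal{C}(S))$ since $\sigma\bigl(\sigma^{-1}(y)\bigr)=y$. For \ref{DvS}, $(D_vS)D_x(D_vS)^{-1} = D_v M_x D_{v^{-1}}$ and \thref{nonnegsims}(ii) closes it; for \ref{SDv} and \ref{alphaS} the right-hand diagonal (resp.\ scalar) factor commutes with $D_x$ and cancels, giving $\hat{S}D_x\hat{S}^{-1} = M_x$ outright, so the cones coincide on the nose.

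The last three parts carry all the bookkeeping with transposes and conjugates, though each remains a one-line manipulation. For \ref{Sconj}, since $\overline{S^{-1}} = \bar{S}^{-1}$ one has $\bar{S} D_x \bar{S}^{-1} = \overline{SD_{\bar x}S^{-1}} = \overline{M_{\bar x}}$, and $\overline{M_{\bar x}}$ is a nonnegative real matrix exactly when $M_{\bar x}$ is, so $x \in \mathcal{C}(\bar{S}) \iff \bar{x} \in \mathcal{C}(S)$. For \ref{Sinv}, use $\bigl(S^{-1}D_xS\bigr)^\top = S^\top D_x S^{-\top}$ together with the transpose-invariance of entrywise nonnegativity; the ``in particular'' clause follows by substituting $S^\top$ for $S$. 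For \ref{Sast}, combine the previous two: $\mathcal{C}(S^\ast) = \mathcal{C}(\overline{S^\top}) = \overline{\mathcal{C}(S^\top)} = \overline{\mathcal{C}(S^{-1})}$, and its ``in particular'' clause comes from applying this identity with $S^{-1}$ in place of $S$. I do not expect a genuine obstacle anywhere; the only point that needs care — and the place a slip is most likely — is keeping the orientation of the set-level statements correct, i.e.\ consistently tracking whether a given operation sends the parameter $x$ to $\bar{x}$, to $\sigma(x)$, or fixes it, and correspondingly writing $\sigma^{-1}(\mathcal{C}(S))$ rather than $\sigma(\mathcal{C}(S))$, and $\overline{\mathcal{C}(S^{-1})}$ rather than $\overline{\mathcal{C}(S)}$.
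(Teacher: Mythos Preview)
Your proposal is correct and matches the paper's own proof essentially line for line: the paper invokes \thref{nonnegsims}(i) for \ref{PS}, \thref{jplemma} for \ref{SP}, \thref{nonnegsims}(ii) for \ref{DvS}, the commutation of diagonals for \ref{SDv} (deriving \ref{alphaS} as the special case $v=\alpha e$), the identity $\overline{SD_xS^{-1}}=\bar S D_{\bar x}\bar S^{-1}$ for \ref{Sconj}, the transpose identity $(S^{-1}D_xS)^\top=S^\top D_xS^{-\top}$ for \ref{Sinv}, and then combines \ref{Sconj} and \ref{Sinv} for \ref{Sast}. There is no substantive difference to report.
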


\begin{proof}
\begin{enumerate}
    [label=(\roman*)]
        \item Follows from part (i) of \thref{nonnegsims}.
        \item If $\sigma$ is the permutation corresponding to $P$, then 
        \begin{align*}
            x \in \mathcal{C}(SP)
            &\Longleftrightarrow (SP)D_x (SP)^{-1} \geqslant 0                                    \\
            &\Longleftrightarrow S(P D_x P^\top) S^{-1} \geqslant 0                               \\
            &\Longleftrightarrow S D_y S^{-1}\geqslant 0,\ y = \sigma(x)          \tag{\thref{jplemma}}  \\
            &\Longleftrightarrow x = \sigma^{-1} (y),\ y \in \mathcal{C}(S),                       \\
            &\Longleftrightarrow x \in \sigma^{-1} \left(\mathcal{C}(S) \right). 
        \end{align*}
        \item Follows from part (ii) of \thref{nonnegsims}.
        \item Follows from the fact that 
        $$S D_x S^{-1} = S D_{v \circ x \circ v^{-1}} S^{-1} = S (D_v D_x D_{v^{-1}}) S^{-1} = (S D_v) D_x (SD_v)^{-1}.$$
        \item Immediate from part \ref{SDv} since $\alpha S = S D_{\alpha e}$.  
        \item Follows from the fact that 
        \[ \overline{S D_x S^{-1}} = \overline{S} \cdot \overline{D_x} \cdot \overline{S^{-1}} = \overline{S} \cdot D_{\bar{x}} \cdot \overline{S}^{-1}. \]
        \item Follows from the fact that $\left( S^{-1} D_x S \right)^\top = S^\top D_x S^{-\top}$. 
        \item Immediate from parts \ref{Sconj} and \ref{Sinv}. \qedhere  
\end{enumerate}
\end{proof}

%-----------------
\begin{definition}
    \thlabel{equivrel}
    If $S, T \in \mathsf{GL}_n(\mathbb{C})$, then $S$ is \emph{equivalent} to $T$, denoted by $S \sim T$, if $S = P D_v T D_w Q$, where $P = P_\sigma$ is a permutation matrix; $Q = Q_\gamma$ is a permutation matrix; $v$ is a positive vector; and $w$ is a totally nonzero vector.    
\end{definition}

%--------------
\begin{theorem}
    \thlabel{thmequivrel}
    If $\sim$ is defined as in \thref{equivrel}, then $\sim$ is an equivalence relation on $\mathsf{GL}_n(\mathbb{C})$.
\end{theorem}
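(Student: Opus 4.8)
The plan is to verify the three defining properties of an equivalence relation — reflexivity, symmetry, and transitivity — directly from \thref{equivrel}, using only that permutation matrices form a group under multiplication, that positive vectors and totally nonzero vectors are closed under the relevant operations, and the identity $D_v D_w = D_{v \circ w}$ together with \thref{jplemma} (namely $P D_x P^\top = D_{\sigma(x)}$) to move diagonal matrices past permutation matrices.

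\emph{Reflexivity.} Taking $P = Q = I$ (the identity permutation) and $v = w = e$, we have $S = I D_e S D_e I = S$, so $S \sim S$.

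\emph{Symmetry.} Suppose $S \sim T$, say $S = P D_v T D_w Q$ with $P = P_\sigma$, $Q = Q_\gamma$ permutation matrices, $v > 0$, and $w$ totally nonzero. Solving for $T$ gives $T = D_v^{-1} P^\top S Q^\top D_w^{-1} = D_{v^{-1}} P^{-1} S Q^{-1} D_{w^{-1}}$. This is not yet in the prescribed form $P' D_{v'} S D_{w'} Q'$, so I would push the outer diagonal matrices through the permutation matrices: by \thref{jplemma}, $D_{v^{-1}} P^{-1} = D_{v^{-1}} P_{\sigma^{-1}} = P_{\sigma^{-1}} D_{\sigma(v^{-1})}$, and similarly $Q^{-1} D_{w^{-1}} = P_{\gamma^{-1}} D_{w^{-1}} = D_{\gamma^{-1}(w^{-1})} P_{\gamma^{-1}}$ (applying \thref{jplemma} in the form $P^\top D_x P = D_{\sigma^{-1}(x)}$). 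Hence $T = P_{\sigma^{-1}} D_{\sigma(v^{-1})} S D_{\gamma^{-1}(w^{-1})} P_{\gamma^{-1}}$. Since $v > 0$ implies $v^{-1} > 0$ and any permutation of a positive vector is positive, $\sigma(v^{-1}) > 0$; since $w$ totally nonzero implies $w^{-1}$ and any permutation of it is totally nonzero, $\gamma^{-1}(w^{-1})$ is totally nonzero. Thus $T \sim S$.

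\emph{Transitivity.} Suppose $S \sim T$ and $T \sim U$, say $S = P_1 D_{v_1} T D_{w_1} Q_1$ and $T = P_2 D_{v_2} U D_{w_2} Q_2$. Substituting, $S = P_1 D_{v_1} P_2 D_{v_2} U D_{w_2} Q_2 D_{w_1} Q_1$. Again using \thref{jplemma} to commute diagonal matrices past permutation matrices, write $D_{v_1} P_2 = P_2 D_{\tau(v_1)}$ for the appropriate permutation $\tau$ associated with $P_2$, and $Q_2 D_{w_1} = D_{\tau'(w_1)} Q_2$ analogously. Then $S = (P_1 P_2) D_{\tau(v_1) \circ v_2} U D_{w_2 \circ \tau'(w_1)} (Q_2 Q_1)$. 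Here $P_1 P_2$ and $Q_2 Q_1$ are permutation matrices; $\tau(v_1) \circ v_2$ is a Hadamard product of positive vectors, hence positive; and $w_2 \circ \tau'(w_1)$ is a Hadamard product of totally nonzero vectors, hence totally nonzero. Therefore $S \sim U$.

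I expect no genuine obstacle here; the only point requiring care — and the place where a careless argument would go wrong — is the bookkeeping of how diagonal matrices and permutation matrices intertwine, which is precisely what \thref{jplemma} is designed to handle. One should be consistent about whether one writes $P D_x P^\top = D_{\sigma(x)}$ or the inverse form, but either way the positivity and total-nonzeroness of the resulting diagonal entries is preserved because coordinate permutation and Hadamard product/inverse preserve both properties.
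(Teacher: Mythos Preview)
Your proof is correct and follows essentially the same approach as the paper's own proof: both verify reflexivity trivially and handle symmetry and transitivity by commuting diagonal matrices past permutation matrices via \thref{jplemma}. If anything, you are slightly more explicit than the paper in checking that positivity and total nonzeroness are preserved under permutation and Hadamard operations.
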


%------------
\begin{proof}
If $S \in \mathsf{GL}_n(\mathbb{C})$, then it is clear that $S \sim S$.

If $S = P D_v T D_w Q$, then, by \thref{jplemma}, 
\begin{align*}
    T 
    = \left( P D_v \right)^{-1} S \left( D_w Q \right)^{-1} 
    = D_{v^{-1}} P^\top S Q^\top D_{w^{-1}} 
    = P_{\sigma^{-1}} D_{\sigma(v^{-1})} S D_{\gamma^{-1}(w^{-1})} Q_{\gamma^{-1}}. 
\end{align*}
Thus, $T\sim S$ whenever $S\sim T$.

If $S = P D_v T D_w Q$ and $T = \hat{P} D_{\hat{v}} U D_{\hat{w}} \hat{Q}$, then, by \thref{jplemma},
\begin{align*}
    S 
    = P D_v T D_w Q = (P D_v) (\hat{P} D_{\hat{v}} U D_{\hat{w}} \hat{Q}) (D_w Q) = (P\hat{P}) D_{\hat{\sigma}^{-1} (v) \circ \hat{v}} U D_{\hat{w} \circ \hat{\gamma}(w)} (\hat{Q} Q).
\end{align*}
Thus, $S\sim U$ whenever $S\sim T$ and $T \sim U$.
\end{proof}

% %------------------
% \begin{proposition}
%     If $S = \bigoplus_{k=1}^\ell S_k \in \mathsf{GL}_n(\mathbb{C})$, where $S_k \in \mathsf{GL}_{n_k}(\mathbb{C})$, then 
%     $$\mathcal{C}(S) = \prod_{k=1}^\ell \mathcal{C}(S_k)$$ and $$\mathcal{P}(S) = \prod_{k=1}^\ell \mathcal{P}(S_k).$$
% \end{proposition}

%     \begin{proof}
%     With a slight abuse of notation, for $x \in \mathbb{C}^n$, write
%     \[ x = \begin{bmatrix}
%         x_1 \\
%         \vdots \\
%         x_\ell
%     \end{bmatrix}, \]
%     where $x_k \in \mathbb{C}^{n_k}$.
%     Both assertions follow from the observation that 
%     \begin{align*}
%         S D_x S^{-1}
%         &= \bigoplus_{k=1}^\ell S_k \cdot \bigoplus_{k=1}^\ell D_{x_k} \cdot \left(\bigoplus_{k=1}^\ell S_k\right)^{-1} \\
%         &= \bigoplus_{k=1}^\ell S_k \cdot \bigoplus_{k=1}^\ell D_{x_k} \cdot \bigoplus_{k=1}^\ell S_k^{-1}              \\
%         &= \bigoplus_{k=1}^\ell \left( S_k D_{x_k} S_k^{-1} \right). \qedhere
%     \end{align*}
%     \end{proof}

%---------------------------------------
\subsection{Complex Perron Similarities}

%-----------------
\begin{definition}
    If $S \in \mathsf{GL}_n(\mathbb{C})$, then $S$ is called a \emph{Perron similarity} if there is an irreducible, nonnegative matrix $A$ and a diagonal matrix $D$ such that $A = S D S^{-1}$.
\end{definition}  

%--------------
\begin{theorem}
    \thlabel{perronsimcharacterization}
    If $S \in \mathsf{GL}_n(\mathbb{C})$, then $S$ is a Perron similarity if and only if there is a unique positive integer $k \in \bracket{n}$ such that $S e_k = \alpha x$ and $e_k^\top S^{-1} = \beta y^\top$, where $\alpha$ and $\beta$ are nonzero complex numbers such that $\alpha \beta > 0$, and $x$ and $y$ are positive vectors. Furthermore, if $S$ is a Perron similarity, then $\mathcal{C}(S)$ is nontrivial.
\end{theorem}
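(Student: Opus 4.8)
The plan is to derive both directions from the Perron--Frobenius theorem for irreducible nonnegative matrices (\thref{pftirr}), together with the elementary observation that if $A = S D_d S^{-1}$ with $D_d = \diag(d_1,\dots,d_n)$, then $AS = S D_d$ and $S^{-1}A = D_d S^{-1}$; hence, for every $j \in \bracket{n}$, the column $S e_j$ is a right eigenvector of $A$ and the row $e_j^\top S^{-1}$ is a left eigenvector of $A$, both associated with the eigenvalue $d_j$.

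For necessity, I would begin with an irreducible nonnegative $A = S D_d S^{-1}$. Since the eigenvalues of $A$, with algebraic multiplicity, are exactly $d_1,\dots,d_n$, and since $\rho(A)$ is an algebraically simple eigenvalue by \thref{pftirr}, there is precisely one index $k$ with $d_k = \rho(A)$. For that $k$, $S e_k$ is a right eigenvector and $e_k^\top S^{-1}$ a left eigenvector for $\rho(A)$; as these eigenspaces are one-dimensional and spanned by the positive right and left Perron vectors $x$ and $y$ (parts (iii)--(iv) of \thref{pftirr}), we obtain $S e_k = \alpha x$ and $e_k^\top S^{-1} = \beta y^\top$ for nonzero scalars $\alpha,\beta \in \mathbb{C}$. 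To pin down the sign I would compute
\[ \alpha\beta\,(y^\top x) = (\beta y^\top)(\alpha x) = (e_k^\top S^{-1})(S e_k) = e_k^\top e_k = 1; \]
since $x,y > 0$ gives $y^\top x > 0$, this forces $\alpha\beta = 1/(y^\top x) > 0$. For uniqueness of $k$: if some index $j$ also satisfied $S e_j = \alpha' x'$ with $x' > 0$, then $x'$ would be a positive right eigenvector of $A$, and pairing it with the positive \emph{left} Perron vector $y$ yields $\rho(A)(y^\top x') = (y^\top A)x' = d_j(y^\top x')$; as $y^\top x' > 0$ this gives $d_j = \rho(A)$, hence $j = k$ by algebraic simplicity.

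For sufficiency I would simply exhibit a realizing diagonalization. Given an index $k$ with the stated properties, the rank-one matrix
\[ S D_{e_k} S^{-1} = (S e_k)(e_k^\top S^{-1}) = \alpha\beta\, x y^\top \]
has $(i,j)$-entry $\alpha\beta\, x_i y_j > 0$, so it is entrywise positive, hence irreducible and nonnegative; thus $A \coloneqq \alpha\beta\, x y^\top = S D_{e_k} S^{-1}$ witnesses that $S$ is a Perron similarity. The same computation disposes of the final assertion: $M_{e_k} = S D_{e_k} S^{-1} = A \geqslant 0$ shows $e_k \in \mathcal{C}(S)$, and for $n \geqslant 2$ the vector $e_k$ is not a nonnegative multiple of $e$, so $\mathcal{C}(S) \supsetneq \coni(e)$ and $\mathcal{C}(S)$ is nontrivial.

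The eigenvector bookkeeping and the two one-line identities are routine. The only step that requires a moment's care is the uniqueness of $k$: one must exclude the possibility that a non-Perron eigenvalue of $A$ has a positive eigenvector, and that is exactly where the cross-pairing of a positive right eigenvector with the positive left Perron vector (and the strict inequality $y^\top x' > 0$) does the work. I would regard that as the crux of the proof; everything else is formal.
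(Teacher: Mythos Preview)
Your proof is correct and follows the paper's approach almost exactly: Perron--Frobenius for necessity, the rank-one matrix $M_{e_k}=(\alpha\beta)xy^\top>0$ for sufficiency, and a left/right eigenvector cross-pairing for uniqueness. The only cosmetic differences are that the paper phrases the uniqueness step via the principle of biorthogonality (pairing $t_\ell^\ast$ with $s_k$ to get $0=(\bar\delta\alpha)v^\top x\neq 0$) rather than your direct computation $\rho(A)\,y^\top x' = d_j\,y^\top x'$, and argues nontriviality by contradiction through $\mathcal{A}(S)$ rather than exhibiting $e_k\in\mathcal{C}(S)\setminus\coni(e)$; both of your variants are, if anything, slightly cleaner.
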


\begin{proof}
If $S$ is a Perron similarity, then there is an irreducible, nonnegative matrix $A$ and a diagonal matrix $D$ such that $A = S D S^{-1}$. In view of \thref{pftirr}, there are (possibly empty) diagonal matrices $\hat{D}$ and $\tilde{D}$ such that 
\[ 
D = 
\kbordermatrix{
    &           & k          &          \\
    & \hat{{D}} &            &          \\
k   &           & \rho(A)    &          \\
    &           &            & \tilde{D}
},\ 1 \leqslant k \leqslant n,
\]
where $\rho(A) \notin \sigma(\hat{D})$ and $\rho(A) \notin \sigma(\tilde{D})$. If $s_k \coloneqq S e_k$, then $As_k = \rho(A)s_k$ since $AS = SD$. Because the geometric multiplicity of an eigenvalue is less than or equal to its algebraic multiplicity \cite[p.~181]{hj2013}, it follows that $\dim \mathsf{E}_{\rho(A)} = 1$. Hence, there is a nonzero complex number $\alpha$ such that $s_k = \alpha x$, where $x$ denotes the unique right Perron vector of $A$. 

Because the line of reasoning above applies to $A^\top = (S^{-\top}) D S^\top$, it follows that there is a nonzero complex number $\beta$ such that $t_k = \beta y$, where $t_k^\top \coloneqq e_k^\top S^{-1}$ and $y$ denotes the unique left Perron vector of $A$. 

Since $S^{-1} S = I$, it follows that $1 = \left( e_k^\top S^{-1}\right)\left( S e_k \right) = t_k^\top s_k = (\alpha \beta) y^\top x$. As $x$ and $y$ are positive, we obtain $\alpha \beta = (y^\top x)^{-1} > 0$. 

Conversely, if there is a positive integer $k$ such that $S e_k = \alpha x$ and $e_k^\top S^{-1} = \beta y^\top$, where $\alpha$ and $\beta$ are nonzero complex numbers such that $\alpha \beta > 0$, and $x$ and $y$ are positive vectors, then $M_{e_k} = SD_{e_k} S^{-1} = S(e_k e_k^\top) S^{-1} = (S e_k)(e_k^\top S^{-1}) = (\alpha x)(\beta y^\top) = (\alpha \beta) xy^\top > 0$. Thus, $S$ is a Perron similarity.

For uniqueness, suppose, for contradiction, that there is a positive integer $\ell \ne k$ such that $s_{\ell} \coloneqq S e_{\ell} = \gamma u$ and $t_{\ell}^\top \coloneqq e_{\ell}^\top S^{-1} = \delta v^\top$, where $\gamma$ and $\delta$ are nonzero complex numbers such that $\gamma \delta > 0$, and $u$ and $v$ are positive vectors. Since $AS = SD$, it follows that $s_{\ell}$ and $t_{\ell}$ are right and left eigenvectors corresponding to an eigenvalue $\lambda \ne \rho(A)$. By the \emph{principle of biorthogonality} \cite[Theorem 1.4.7(a)]{hj2013}, $t_{\ell}^\ast s_k = 0$. However, 
    \begin{equation*}
    t_{\ell}^\ast s_k = \overline{t_{\ell}^\top} s_k = \overline{\delta v^\top} (\alpha x) = (\bar{\delta}\alpha) v^\top x \ne 0,
    \end{equation*}
a contradiction.

Finally, suppose that $S$ is a Perron similarity. For contradiction, if $\mathcal{C}(S)$ is trivial, then $\mathcal{A}(S)$ is trivial and only contains nonnegative matrices of the form $\alpha I$, a contradiction if $S$ is a Perron similarity.
\end{proof}

%-------------
\begin{remark}
    It is known that if $\mathcal{C}(S)$ is nontrivial, then $S$ is not necessarily a Perron similarity (see Dockter et al.~\cite[Example 11]{dppt2022}). 
\end{remark}

%-------------------------
\subsection{Normalization}
 
If $A$ is a real matrix, then any eigenvector associated with a real eigenvalue of $A$ may be taken to be real (\cite[p.~48, Problem 1.1.P3]{hj2013}), and if $v$ is an eigenvector corresponding to a nonreal eigenvalue $\lambda$ of $A$, then $\bar{v}$ is an eigenvector corresponding to $\bar{\lambda}$ (\cite[p.~45]{hj2013}). In view of these elementary facts, and taking into account parts (i)--(iv) of \thref{conetransforms} and \thref{thmequivrel}, if $S$ is a Perron similarity, then  
\begin{equation}
S \sim \begin{bmatrix} e & s_2 & \cdots & s_r & s_{r+1} & \bar{s}_{r+1} & \cdots & s_c & \bar{s}_c \end{bmatrix}, \label{strongps}
\end{equation}
where $s_i \in S^n$, $i=2,\dots,c$; $\Im(s_i) = 0$, for $i = 2,\dots,r$; and $\Im(s_i) \neq 0$, for $i=r+1,\dots,c$. Hereinafter it is assumed that every Perron similarity is normalized.

The following simple, but useful fact was shown for real matrices \cite[Lemma 2.1]{jp2017_2}. Although the proof extends to complex matrices without alteration, a demonstration is included for completeness.

%------------------
\begin{proposition}
    \thlabel{simple}
        If $S \in \mathsf{GL}_n(\mathbb{C})$, then $x^\top S^{-1} \geqslant 0$ if and only if $x \in \mathcal{C}_r(S)$. %Furthermore, $x^\top S^{-1} > 0$ if and only if $x \in \interior(\mathcal{C}_r(S))$
\end{proposition}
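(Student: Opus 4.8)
The plan is to peel both sides of the asserted equivalence down to the single statement ``$x$ is a nonnegative combination of the rows of $S$'' and to observe that every step in the reduction is reversible. Recall that $\mathcal{C}_r(S)$ denotes the row cone $\coni(r_1(S),\dots,r_n(S))$, where $r_i(S) = S^\top e_i$ is the $i$\textsuperscript{th} row of $S$ written as a column vector. The whole argument is then a short chain of equivalences, so both implications come for free once the chain is set up.

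First I would translate the left-hand side using the paper's convention for vector inequalities: $x^\top S^{-1} \geqslant 0$ holds precisely when there is a real vector $w \geqslant 0$ with $x^\top S^{-1} = w^\top$. Right-multiplying this identity by $S$ (legitimate since $S$ is invertible, and reversible by left-multiplying the result by $S^{-1}$) gives $x^\top = w^\top S$, and transposing gives $x = S^\top w$. Expanding, $S^\top w = \sum_{i=1}^n w_i\, (S^\top e_i) = \sum_{i=1}^n w_i\, r_i(S)$, so $x = \sum_{i=1}^n w_i\, r_i(S)$ with every $w_i \geqslant 0$; this is exactly the assertion $x \in \coni(r_1(S),\dots,r_n(S)) = \mathcal{C}_r(S)$. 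Reading the chain backwards, any $x \in \mathcal{C}_r(S)$ furnishes the required nonnegative $w$ with $x^\top S^{-1} = w^\top \geqslant 0$. Since each link is an ``if and only if,'' the proposition follows.

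There is no real obstacle here; the lemma is genuinely routine. The only point deserving a word of care is that, because $S$ is complex, the row vector $x^\top S^{-1}$ is a priori complex, and ``$\geqslant 0$'' for such a vector simultaneously demands that it be real and entrywise nonnegative. This is precisely what guarantees that the extracted coefficient vector $w$ is a bona fide nonnegative real vector, so that the representation $x = \sum_i w_i\, r_i(S)$ is an honest conical combination in the sense of the definition of $\coni$. With that observation made, the proof is complete.
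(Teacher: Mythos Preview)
Your proof is correct and follows essentially the same approach as the paper's: both arguments set $y^\top \coloneqq x^\top S^{-1}$, right-multiply by $S$, and read off that $x^\top = y^\top S$ with $y \geqslant 0$ is precisely the statement $x \in \mathcal{C}_r(S)$. The paper compresses this into a single displayed chain of equivalences, while you spell out the transposition and the expansion $S^\top w = \sum_i w_i r_i(S)$ and add the clarifying remark about what $\geqslant 0$ means for a complex row vector, but the underlying idea is identical.
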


\begin{proof}
Notice that 
\begin{equation*}
    y^\top \coloneqq x^\top S^{-1} \geqslant 0
    \Longleftrightarrow x^\top = y^\top S,~y \geqslant 0             
    \Longleftrightarrow x \in \mathcal{C}_r(S). \qedhere
\end{equation*} 
% and 
% \begin{equation*}
%     y^\top \coloneqq x^\top S^{-1} > 0
%     \Longleftrightarrow x^\top = y^\top S,~y > 0             
%     \Longleftrightarrow x \in \interior(\mathcal{C}_r(S)).   \qedhere
% \end{equation*}
\end{proof}

% An immediate consequence of \thref{simple} is the following result, which was established for real matrices \cite[Theorem 3.11]{jp2016} although the preceding argument result applies to real matrices. 

% %----------------
% \begin{corollary}
%     \thlabel{psequiv}
%         If $S \in \mathsf{GL}_n(\mathbb{C})$, then $e_k^\top S^{-1} \geqslant 0$ if and only if $e_k \in \mathcal{C}_r(S)$. Moreover, $e_k^\top S^{-1} > 0$ if and only if $e_k \in \interior(\mathcal{C}_r(S))$. 
% \end{corollary}

% %--------------
% \begin{theorem}
%     If $S \in \mathsf{GL}_n(\mathbb{C})$, then $S$ is a Perron similarity if and only if there is a unique positive integer $k \in \bracket{n}$ such that $e_k^\top \in \interior(\mathcal{C}_r(S))$ and $e_k^\top \in \interior(\mathcal{C}_r(S^{-\top}))$.
% \end{theorem}

% \begin{proof}
%     Immediate from \thref{perronsimcharacterization} and \thref{psequiv}.
% \end{proof}

\begin{definition}
    Given an $m$-by-$n$ matrix $S$, the \emph{row cone of $S$} \cite{jp2017_2} denoted by $\mathcal{C}_r(S)$, is the the polyhedral cone generated by the rows of $S$, i.e., $\mathcal{C}_r(S)\coloneqq \coni(r_1,\dots,r_m)$ and the \emph{row polytope of $S$}, denoted by $\mathcal{P}_r(S)$, is the polytope generated by the rows of $S$, i.e., $\mathcal{P}_r(S) \coloneqq \conv(r_1,\dots, r_m)$.     
\end{definition}

Johnson and Paparella \cite{jp2016} demonstrated that $\mathcal{C}(S)$ can coincide with $\mathcal{C}_r(S)$ for a class of \emph{Hadamard matrices} called \emph{Walsh matrices} (see \thref{walshmatrices} below). We extend and generalize these results for complex matrices. 

%-----------------
\begin{definition}
    If $S$ is a Perron similarity, then $S$ is called \emph{ideal} if $\mathcal{C}(S) = \mathcal{C}_r(S)$.
\end{definition}

% %------------------
% \begin{proposition}
%     \thlabel{rhcrows}
%         If $S \in \mathsf{GL}_n(\mathbb{C})$, then $r_i \in \mathcal{C}(S)$ if and only if $r_i \circ r_j \in \mathcal{C}_r(S),\ \forall j \in \bracket{n}$.
% \end{proposition}

% \begin{proof}
% Because the $j$\textsuperscript{th}-row of $S D_{r_i}$ is $r_j \circ r_i$, it follows that 
% \begin{align*}
% r_i \in \mathcal{C}(S) 
% & \Longleftrightarrow S D_{r_i} S^{-1} \geqslant 0 											\\ 
% & \Longleftrightarrow (S D_{r_i}) S^{-1} \geqslant 0 										\\
% & \Longleftrightarrow \left( r_j \circ r_i \right) S^{-1} \geqslant 0,~\forall j \in \bracket{n} 					\\
% & \Longleftrightarrow r_j \circ r_i = r_i \circ r_j \in \mathcal{C}_r(S),~\forall j \in \bracket{n} \tag{\thref{simple}}
% \end{align*}
% as desired.
% \end{proof}

% Of interest will be Perron similarities whose rows are all realizable. Thus, we consider the following class of matrices.

% %------------
% \begin{definition}
%     \thlabel{rhc}
%         If $S \in \mathsf{M}_{m \times n}(\mathbb{C})$, then $S$ is called \emph{row Hadamard conic} (RHC) if \( r_i \circ r_j \in \mathcal{C}_r(S),~\forall i,j \in \bracket{m} \).
% \end{definition}

% %--------------
% \begin{theorem}
%     \thlabel{rhcequiv}
%         If $S \in \mathsf{GL}_{n}(\mathbb{C})$, then $r_i \in \mathcal{C}(S),\ \forall i \in \bracket{n}$ if and only if $S$ is RHC.
% \end{theorem}

% \begin{proof}
% Immediate from \thref{rhcrows} and \thref{rhc}.
% \end{proof}

%----------
\begin{lemma}
    \thlabel{realizablerows}
        If $S \in \mathsf{GL}_n(\mathbb{C})$, then $\mathcal{C}_r(S) \subseteq \mathcal{C}(S)$ if and only if $r_i \in \mathcal{C}(S),\ \forall i \in \bracket{n}$.
\end{lemma}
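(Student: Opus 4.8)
The statement is an ``if and only if'' where one direction is trivial and the other follows from the convex-cone structure of $\mathcal{C}(S)$ established in \thref{hadamardcones}. The plan is to dispatch the forward implication by noting that $\mathcal{C}_r(S) = \coni(r_1,\dots,r_n)$ by definition, so each generator $r_i$ lies in $\mathcal{C}_r(S)$; if $\mathcal{C}_r(S) \subseteq \mathcal{C}(S)$, then a fortiori $r_i \in \mathcal{C}(S)$ for every $i \in \bracket{n}$.

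For the converse, suppose $r_i \in \mathcal{C}(S)$ for all $i \in \bracket{n}$. Since $\mathcal{C}(S)$ is a convex cone by part \ref{CShadamard} of \thref{hadamardcones}, and $\{r_1,\dots,r_n\}$ is a finite subset of $\mathcal{C}(S)$, \thref{conisub} gives $\coni(r_1,\dots,r_n) \subseteq \mathcal{C}(S)$, i.e., $\mathcal{C}_r(S) \subseteq \mathcal{C}(S)$. This completes the argument.

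There is really no obstacle here: the lemma is a direct consequence of the fact that $\mathcal{C}_r(S)$ is by definition the conical hull of the rows and that $\mathcal{C}(S)$ is a convex cone, so containment of a cone generated by finitely many vectors is equivalent to containment of those generators. The only point requiring the slightest care is invoking the correct earlier result (\thref{conisub} for the finite-subset-of-a-convex-cone statement, and part \ref{CShadamard} of \thref{hadamardcones} to know that $\mathcal{C}(S)$ is in fact a convex cone). One could alternatively cite \thref{conecontain} directly, taking $K = \mathcal{C}_r(S)$ and $\hat{K} = \mathcal{C}(S)$, which states precisely that a cone $\coni(x_1,\dots,x_m)$ is contained in a convex cone $\hat{K}$ if and only if each $x_i \in \hat{K}$; this yields the lemma in one line.
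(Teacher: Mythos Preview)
Your proposal is correct and essentially matches the paper's proof, which simply reads ``Immediate from \thref{conecontain}.'' Your closing remark that one could cite \thref{conecontain} directly for a one-line proof is exactly what the paper does; your more expanded version via \thref{conisub} and \thref{hadamardcones} is just an unpacking of the same idea.
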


\begin{proof}
Immediate from \thref{conecontain}.
\end{proof}

%----------
\begin{lemma}
    \thlabel{allonesrow}
        If $S \in \mathsf{GL}_n(\mathbb{C})$, then \(\mathcal{C}(S) \subseteq \mathcal{C}_r(S)\) if and only if \( e \in \mathcal{C}_r(S)\).
\end{lemma}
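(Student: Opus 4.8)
The plan is to lean on \thref{simple}, which says $x^\top S^{-1} \geqslant 0$ if and only if $x \in \mathcal{C}_r(S)$, i.e.\ it identifies $\mathcal{C}_r(S)$ with $\{x \in \mathbb{C}^n \mid x^\top S^{-1} \geqslant 0\}$. Combined with the fact (already noted in the proof of \thref{hadamardcones}) that $e \in \mathcal{C}(S)$ always holds, the forward direction is immediate: if $\mathcal{C}(S) \subseteq \mathcal{C}_r(S)$, then $e \in \mathcal{C}(S) \subseteq \mathcal{C}_r(S)$.

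For the converse I would argue as follows. Assume $e \in \mathcal{C}_r(S)$, so by \thref{simple} the row vector $w^\top \coloneqq e^\top S^{-1}$ satisfies $w^\top \geqslant 0$. Let $x \in \mathcal{C}(S)$, so that $M_x = S D_x S^{-1} \geqslant 0$. The key computation is the identity
\[ w^\top M_x = \left( e^\top S^{-1}\right) S D_x S^{-1} = e^\top D_x S^{-1} = x^\top S^{-1}, \]
where the last equality uses $e^\top D_x = (D_x e)^\top = x^\top$. Since a nonnegative row vector times a nonnegative matrix is a nonnegative row vector, $x^\top S^{-1} = w^\top M_x \geqslant 0$, and hence $x \in \mathcal{C}_r(S)$ by \thref{simple} again. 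As $x \in \mathcal{C}(S)$ was arbitrary, $\mathcal{C}(S) \subseteq \mathcal{C}_r(S)$.

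There is essentially no obstacle here; the only thing to notice is the collapsing identity $\left(e^\top S^{-1}\right) M_x = x^\top S^{-1}$, which says that left-multiplying $M_x$ by the nonnegative row $e^\top S^{-1}$ recovers the row $x^\top S^{-1}$ whose nonnegativity is exactly the membership criterion for $\mathcal{C}_r(S)$. Everything else is bookkeeping with \thref{simple}.
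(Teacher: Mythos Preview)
Your proof is correct and is essentially identical to the paper's: the paper writes $e^\top = y^\top S$ with $y \geqslant 0$ (your $w^\top = e^\top S^{-1}$) and computes $x^\top S^{-1} = (e^\top D_x)S^{-1} = y^\top(S D_x S^{-1}) \geqslant 0$, which is precisely your identity $x^\top S^{-1} = w^\top M_x$ read in the other direction. Both then conclude via \thref{simple}.
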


\begin{proof}
The necessity of the condition is trivial given that $e \in \mathcal{C}(S)$. 

For sufficiency, assume that \( e \in \mathcal{C}_r(S)\) and let $x \in \mathcal{C}(S)$. By definition, there is a nonnegative vector $y$ such that $e^\top = y^\top S$ and $S D_x S^{-1} \geqslant 0$. Since 
\[ x^\top S^{-1} = (e^\top D_x) S^{-1} = ((y^\top S) D_x) S^{-1} = y^\top (S D_x S^{-1}) \geqslant 0, \]
it follows that $x \in \mathcal{C}_r(S)$ by \thref{simple} .
\end{proof}  

%--------------
\begin{theorem}
    \thlabel{thm:idealsims}
        If $S$ is a Perron similarity, then $S$ is ideal if and only if \( e \in \mathcal{C}_r(S)\) and $r_i \in \mathcal{C}(S),\ \forall i \in \bracket{n}$.
\end{theorem}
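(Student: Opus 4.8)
The plan is to prove \thref{thm:idealsims} by combining \thref{realizablerows} and \thref{allonesrow}, since the definition of ``ideal'' is precisely the set equality $\mathcal{C}(S) = \mathcal{C}_r(S)$, and equality of sets decomposes into the two inclusions $\mathcal{C}_r(S) \subseteq \mathcal{C}(S)$ and $\mathcal{C}(S) \subseteq \mathcal{C}_r(S)$. Note that the hypothesis ``$S$ is a Perron similarity'' is not actually needed for this equivalence; it appears in the statement only because ``ideal'' was defined exclusively for Perron similarities.

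First I would observe that, by definition, $S$ is ideal if and only if $\mathcal{C}(S) = \mathcal{C}_r(S)$, which holds if and only if both $\mathcal{C}_r(S) \subseteq \mathcal{C}(S)$ and $\mathcal{C}(S) \subseteq \mathcal{C}_r(S)$. Then I would invoke \thref{realizablerows}: the inclusion $\mathcal{C}_r(S) \subseteq \mathcal{C}(S)$ is equivalent to $r_i \in \mathcal{C}(S)$ for all $i \in \bracket{n}$. Next I would invoke \thref{allonesrow}: the inclusion $\mathcal{C}(S) \subseteq \mathcal{C}_r(S)$ is equivalent to $e \in \mathcal{C}_r(S)$. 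Conjoining these two equivalences yields that $\mathcal{C}(S) = \mathcal{C}_r(S)$ if and only if ($e \in \mathcal{C}_r(S)$) and ($r_i \in \mathcal{C}(S)$ for all $i \in \bracket{n}$), which is exactly the claimed characterization.

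There is essentially no obstacle here: the theorem is a straightforward corollary of the two preceding lemmas, and the proof is a one-line logical combination. The only thing worth being careful about is to make explicit that set equality is the conjunction of the two inclusions and that \thref{realizablerows} and \thref{allonesrow} apply to an arbitrary $S \in \mathsf{GL}_n(\mathbb{C})$ (which a Perron similarity certainly is). The proof I would write is: \emph{By definition, $S$ is ideal if and only if $\mathcal{C}(S) = \mathcal{C}_r(S)$, i.e., if and only if $\mathcal{C}_r(S) \subseteq \mathcal{C}(S)$ and $\mathcal{C}(S) \subseteq \mathcal{C}_r(S)$. By \thref{realizablerows}, the former holds if and only if $r_i \in \mathcal{C}(S)$ for all $i \in \bracket{n}$; by \thref{allonesrow}, the latter holds if and only if $e \in \mathcal{C}_r(S)$. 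The result follows.}
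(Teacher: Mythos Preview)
Your proposal is correct and is exactly the paper's approach: the paper's proof reads ``Immediate from Lemmas~\ref{realizablerows} and~\ref{allonesrow},'' and you have simply spelled out the one-line logical combination of those two lemmas.
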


\begin{proof}
    Immediate from Lemmas \ref{realizablerows} and \ref{allonesrow}.
\end{proof}

%--------------
\begin{theorem}
    \thlabel{coneandpoly}
    If $S$ is a Perron similarity, then $S$ is ideal if and only if $\mathcal{P}_r(S) = \mathcal{P}(S)$.
\end{theorem}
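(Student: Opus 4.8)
The plan is to reduce the claim to \thref{thm:idealsims} by showing that the polytope equality $\mathcal{P}_r(S) = \mathcal{P}(S)$ is equivalent to the cone equality $\mathcal{C}_r(S) = \mathcal{C}(S)$, exploiting the fact that both $\mathcal{P}_r(S)$ and $\mathcal{P}(S)$ are "unit-$\infty$-norm cross sections" of their respective cones. First I would record the precise relationship between the polytopes and the cones. On the cone side, every element $x$ of $\mathcal{C}(S)$ satisfies $M_x e = \rho(\Lambda(x)) e' $-type normalizations; more usefully, since $S$ is normalized as in \eqref{strongps} we have $Se_1 = e$, so $M_x e = S D_x S^{-1} e$. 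The condition $M_x e = e$ that defines $\mathcal{P}(S)$ inside $\mathcal{C}(S)$ is a single affine condition, and one checks that for $x \in \mathcal{C}(S)$ with Perron data as in \thref{perronsimcharacterization} (with $k=1$ after normalization), $M_x e = e$ forces $x_1 = 1$ and conversely. So $\mathcal{P}(S) = \{ x \in \mathcal{C}(S) : x_1 = 1\}$ and every nonzero $x \in \mathcal{C}(S)$ is a positive multiple of a unique point of $\mathcal{P}(S)$, namely $x / x_1$ (noting $x_1 > 0$ for $x \neq 0$ in a Perron spectracone, since $x_1$ is the spectral radius of the nonnegative matrix $M_x$ and equals it). On the row side, after normalization $r_1(S) = e$ (the first column of $S$ is $e$, but we need the first \emph{row}; here I would instead use that $e \in \mathcal{C}_r(S)$ is exactly the hypothesis being toggled, so this direction is cleaner via \thref{cor:allonesextreme}).

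The cleanest route is: (i) if $S$ is ideal, i.e. $\mathcal{C}(S) = \mathcal{C}_r(S)$, then intersecting both cones with the affine slice $\{x_1 = 1\}$ gives $\mathcal{P}(S) = \mathcal{C}(S) \cap \{x_1=1\} = \mathcal{C}_r(S)\cap\{x_1=1\}$, and one identifies $\mathcal{C}_r(S) \cap \{x_1 = 1\}$ with $\mathcal{P}_r(S) = \conv(r_1,\dots,r_n)$ — this requires that each row $r_i$ of $S$ has first coordinate $1$, or can be scaled to have first coordinate $1$, which holds because $r_i \in \mathcal{C}(S)$ (since $S$ is ideal) and hence $[r_i]_1 = \rho(M_{r_i}) > 0$; then $\conv$ of the $r_i/[r_i]_1$ coincides with $\coni(r_1,\dots,r_n)\cap\{x_1=1\}$. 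Wait — more carefully, the rows themselves need not lie in the slice, so $\mathcal{P}_r(S) = \conv(r_1,\dots,r_n)$ is a polytope that need not sit inside $\{x_1=1\}$; I should instead argue $\mathcal{P}_r(S) = \mathcal{P}(S)$ directly from $\mathcal{C}_r(S)=\mathcal{C}(S)$ plus the observation (from \thref{hadamardcones}(i) and normalization) that $e\in\mathcal{P}_r(S)$ and that $\mathcal{P}_r(S)\subseteq \mathcal{P}(S)$ always when the rows are realizable, using \thref{stochasticconvex}: each $M_{r_i}$ with $r_i$ scaled appropriately is stochastic, so convex combinations land in $\mathcal{P}(S)$. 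For the converse (ii), if $\mathcal{P}_r(S) = \mathcal{P}(S)$, then in particular $e \in \mathcal{P}_r(S) = \conv(r_1,\dots,r_n)$, so by \thref{cor:allonesextreme} some row equals $e$, giving $e \in \mathcal{C}_r(S)$; and each $r_i \in \mathcal{P}_r(S) \subseteq \mathcal{C}(S)$ — here one uses that $r_i$, or a positive scalar multiple of it, lies in $\mathcal{P}(S) \subseteq \mathcal{C}(S)$, hence $r_i \in \mathcal{C}(S)$ since $\mathcal{C}(S)$ is a cone. Then \thref{thm:idealsims} gives idealness.

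The technical glue I expect to be the main obstacle is the passage between the polytope $\mathcal{P}_r(S) = \conv(r_1,\dots,r_n)$ and the cone $\mathcal{C}_r(S) = \coni(r_1,\dots,r_n)$: a convex hull of vectors is \emph{not} in general a cross-section of their conical hull, so the equivalence "$\mathcal{P}_r(S) = \mathcal{P}(S) \iff \mathcal{C}_r(S) = \mathcal{C}(S)$" needs the structural input that $\mathcal{P}(S)$ is a genuine cross-section of $\mathcal{C}(S)$ (via the slice $x_1 = 1$, valid because $x_1 = \rho(M_x) > 0$ for all nonzero $x \in \mathcal{C}(S)$, a Perron-Frobenius fact from \thref{pftirr} applied to $M_x$ which is irreducible nonnegative whenever it is nonzero — or, where $M_x$ is reducible, a limiting/continuity argument) together with \thref{cor:allonesextreme} to force a row to equal $e$. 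Once those two facts are in hand, the argument is bookkeeping: scale each $r_i$ to $\hat r_i := r_i/[r_i]_1 \in \mathcal{P}(S)$, note $\mathcal{P}(S) = \conv$ of these scaled rows precisely when $\mathcal{C}(S) = \coni(r_1,\dots,r_n)$, and observe $\conv(r_1,\dots,r_n) = \conv(\hat r_1,\dots,\hat r_n)$ iff each $[r_i]_1 = 1$, which is automatic under normalization since the rows of a normalized Perron similarity all have first entry... — at this point I would simply invoke \thref{thm:idealsims} in both directions rather than re-deriving the scalings, which sidesteps the need to track first-coordinate normalizations of the rows explicitly: ideal $\Rightarrow$ $e\in\mathcal{C}_r(S)$ and $r_i\in\mathcal{C}(S)$, from which $e\in\mathcal{P}_r(S)$ and $\mathcal{P}_r(S)\subseteq\mathcal{P}(S)$ (via \thref{stochasticconvex}) and $\mathcal{P}(S)\subseteq\mathcal{P}_r(S)$ (via \thref{allonesrow} applied inside the slice), and conversely $\mathcal{P}_r(S)=\mathcal{P}(S)$ delivers the two hypotheses of \thref{thm:idealsims}.
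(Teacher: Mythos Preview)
Your converse direction is essentially the paper's argument: from $\mathcal{P}_r(S)=\mathcal{P}(S)$ you get $e\in\mathcal{P}_r(S)$, invoke \thref{cor:allonesextreme} to force some row to equal $e$, and note each $r_i\in\mathcal{P}_r(S)=\mathcal{P}(S)\subseteq\mathcal{C}(S)$; then \thref{thm:idealsims} finishes it. Good.

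The forward direction has a real gap, and it is precisely the point you flag as ``technical glue'' and then explicitly try to sidestep. You cannot sidestep it. The single observation that does all the work is: since $S$ is normalized, the first \emph{column} of $S$ is $e$, so \emph{every row} $r_i$ has $[r_i]_1=1$. This instantly gives (a) $\mathcal{C}_r(S)\cap\{x_1=1\}=\mathcal{P}_r(S)$, because a conical combination $\sum\alpha_i r_i$ has first entry $\sum\alpha_i$; (b) $S^{-1}e=e_1$, so for any $x$ with $x_1=1$ one computes $M_x e=SD_xS^{-1}e=SD_xe_1=x_1\,Se_1=e$, i.e.\ $M_{r_i}$ is stochastic and $\mathcal{P}_r(S)\subseteq\mathcal{P}(S)$; and (c) conversely, if $x\in\mathcal{P}(S)$ then $e$ is a positive eigenvector of $M_x$ with eigenvalue $x_1$, so $x_1=\rho(M_x)=1$ (no irreducibility needed, just the standard fact that a nonnegative matrix with a positive eigenvector has the corresponding eigenvalue equal to its spectral radius), and then $x\in\mathcal{C}(S)=\mathcal{C}_r(S)$ intersected with $\{x_1=1\}$ gives $x\in\mathcal{P}_r(S)$.

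Your attempted workaround---``invoke \thref{thm:idealsims} in both directions\dots\ which sidesteps the need to track first-coordinate normalizations''---fails at three points: $e\in\mathcal{C}_r(S)$ does not by itself give $e\in\mathcal{P}_r(S)$; knowing $r_i\in\mathcal{C}(S)$ does not make $M_{r_i}$ stochastic without $[r_i]_1=1$; and \thref{allonesrow} only yields $\mathcal{C}(S)\subseteq\mathcal{C}_r(S)$, which does not pass to the polytopes without identifying $\mathcal{P}_r(S)$ as the $x_1=1$ slice of $\mathcal{C}_r(S)$. All three are fixed by the one-line observation above; once you insert it, your sketch becomes the paper's proof.
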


\begin{proof}
If $S$ is ideal, then $\mathcal{C}_r(S) = \mathcal{C}(S)$. If $x \in \mathcal{P}_r(S)$, then $x \in \mathcal{C}_r(S)$ and, by hypothesis, $x \in \mathcal{C}(S)$. Thus, $M_x \geqslant 0$ and it suffices to demonstrate that $M_x e = e$. Since $Se_1 = e$, it follows that $S^{-1} e = e_1$. Furthermore, any convex combination of the rows of $S$ produces a vector whose first entry is 1. Thus, $x_1 = 1$ and 
\begin{align*}
    M_x e = (S D_x S^{-1}) e = S D_x e_1 = S e_1 = e,
\end{align*}
i.e., $x \in \mathcal{P}(S)$. If $x \in \mathcal{P}(S)$, then $M_x \geqslant 0$ and $M_x e = e$. Since $Se_1 = e$ and $M_x$ has a positive eigenvector, it follows that $x_1 = \rho(M_x) = 1$ \cite[Corollary 8.1.30]{hj2013}. Notice that $x \in \mathcal{P}(S) \implies x \in \mathcal{C}(S) \implies x \in \mathcal{C}_r(S)$, i.e., $\exists y \geqslant 0$ such that $x^\top = y^\top S$. Thus, 
$$y^\top e = (x^\top S^{-1}) e = x^\top (S^{-1} e) = x^\top e_1 = x_1 = 1,$$ 
i.e., $x \in \mathcal{P}_r(S)$.

Conversely, suppose that $\mathcal{P}_r(S) = \mathcal{P}(S)$. Since $e \in \mathcal{C}(S)$, it follows from Propositions \ref{exptball} and \ref{exptconv} that one of the rows of $S$ must be $e^\top$. Thus, $\mathcal{C}(S) \subseteq \mathcal{C}_r(S)$ by \thref{allonesrow}. By hypothesis, every row of $S$ is realizable so that, following {Lemma} \ref{realizablerows}, $\mathcal{C}_r(S) \subseteq \mathcal{C}(S)$.
\end{proof}

%----------------
\begin{corollary}
If $S$ is a Perron similarity, then $S$ is ideal if and only if $r_i \in \mathcal{C}(S)$, $\forall i \in \bracket{n}$, and $\exists k \in \bracket{n}$ such that $e_k^\top S = e^\top$. 
\end{corollary}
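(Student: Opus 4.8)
The plan is to recognize this as a cosmetic reformulation of \thref{thm:idealsims}, which already characterizes idealness by the two conditions ``$r_i \in \mathcal{C}(S)$ for all $i \in \bracket{n}$'' and ``$e \in \mathcal{C}_r(S)$''. Since the first condition is common to both statements, it suffices to prove that, granting $r_i \in \mathcal{C}(S)$ for every $i$, the membership $e \in \mathcal{C}_r(S)$ is equivalent to the existence of $k \in \bracket{n}$ with $e_k^\top S = e^\top$ --- that is, with $r_k = e$. I would spell out this translation ($e_k^\top S = e^\top \iff r_k = e$, since $r_k = S^\top e_k$) once at the outset.

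The ($\Leftarrow$) direction is then immediate: if $r_k = e$ for some $k$, then $e = r_k \in \coni(r_1,\dots,r_n) = \mathcal{C}_r(S)$, and \thref{thm:idealsims} yields that $S$ is ideal.

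For ($\Rightarrow$), I would start from $S$ ideal, so that $r_i \in \mathcal{C}(S)$ for all $i$ and, by \thref{coneandpoly}, $\mathcal{P}_r(S) = \mathcal{P}(S)$. Because $M_e = I_n$ is stochastic, $e \in \mathcal{P}(S) = \mathcal{P}_r(S) = \conv(r_1,\dots,r_n)$; and because the eigenvalues of a stochastic matrix lie in the closed unit disc, $\mathcal{P}(S) \subseteq B^n$, so $r_1,\dots,r_n \in B^n$. Applying \thref{cor:allonesextreme} to the list $\{r_1,\dots,r_n\} \subseteq B^n$ then forces $e \in \{r_1,\dots,r_n\}$, i.e.\ $r_k = e$ for some $k$. (If one prefers to bypass \thref{coneandpoly}: the normalization \eqref{strongps} gives $Se_1 = e$, hence $S^{-1}e = e_1$ and the first coordinate of each row $r_i$ equals $1$; this makes every conical representation $e = \sum_i \alpha_i r_i$ automatically convex, and the identity $M_{r_i}e = SD_{r_i}S^{-1}e = SD_{r_i}e_1 = Se_1 = e$ places each $r_i$ in $\mathcal{P}(S) \subseteq B^n$, after which \thref{cor:allonesextreme}, or \thref{exptball} together with \thref{exptconv}, concludes.)

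I do not anticipate a genuine obstacle: the content is entirely a matter of bookkeeping. The one point requiring care --- and the only place where the argument could go wrong if done naively --- is that the extremality tools (\thref{exptball}, \thref{exptconv}, \thref{cor:allonesextreme}) live in the \emph{bounded} set $B^n$, so one cannot extract ``$r_k = e$'' directly from ``$e \in \mathcal{C}_r(S)$'' inside the unbounded cone; one must first descend to the polytope $\mathcal{P}_r(S) = \mathcal{P}(S) \subseteq B^n$, where $e$ is an extreme point.
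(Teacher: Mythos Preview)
Your proof is correct and follows the same route as the paper, which simply writes ``Immediate from \thref{cor:allonesextreme} and \thref{thm:idealsims}.'' You have filled in precisely the step the paper elides---namely, that \thref{cor:allonesextreme} applies to a finite subset of $B^n$ and to its \emph{convex} hull, whereas \thref{thm:idealsims} only hands you $e \in \mathcal{C}_r(S)$, a \emph{conical} statement---and your descent to $\mathcal{P}_r(S) = \mathcal{P}(S) \subseteq B^n$ via \thref{coneandpoly} (or, equivalently, via the normalization $Se_1 = e$) is exactly the right bridge.
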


\begin{proof}
Immediate from \thref{cor:allonesextreme} and \thref{thm:idealsims}.
\end{proof}

% %-------------
% \begin{remark}
%     \thlabel{dft_remark}
%     It is clear (or otherwise easy to show) that if $S$ is a Perron similarity and $P$ is a permutation matrix, then $S$ is ideal if and only if $PS$ is ideal.
% \end{remark}

%-----------------
\begin{definition}
    If $S$ is a Perron similarity, then $S$ is called \emph{extremal} if $\mathcal{P}(S)$ contains an extremal point other than $e$.
\end{definition}

%-----------------------------------------------
\subsection{Kronecker Product \& Walsh Matrices}
 
If $A \in \mathsf{M}_{m \times n}(\mathbb{F})$ and $B \in \mathsf{M}_{p \times q}(\mathbb{F})$, then the \emph{Kronecker product of $A$ and $B$}, denoted by $A \otimes B$, is the $mp$-by-$nq$ matrix defined blockwise by $A \otimes B = \begin{bmatrix} a_{ij}B \end{bmatrix}$. 

If $A \in \mathsf{M}_{m \times n}(\mathbb{F})$ and $p \in \mathbb{N}_0$, then 
\[ 
A^{\otimes p} \coloneqq
\begin{cases}
    [1], & p = 0 \\
    A^{\otimes (p-1)} \otimes A = A \otimes A^{\otimes (p-1)}, & p > 1.
\end{cases} \]

Although some of the definitions differ, the proofs for the following results, established by Dockter et al.~\cite{dppt2022}, can be retooled to obtain the following.

%--------------
\begin{theorem}
    [{\cite[Theorem 7]{dppt2022}}]
        \thlabel{kronPS}
        If $S$ and $T$ are Perron similarities, then $S\otimes T$ is a Perron similarity. 
\end{theorem}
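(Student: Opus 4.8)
The plan is to use the characterization of Perron similarities given in \thref{perronsimcharacterization}, namely that $S \in \mathsf{GL}_n(\mathbb{C})$ is a Perron similarity if and only if there is a unique index $k$ for which $S e_k = \alpha x$ and $e_k^\top S^{-1} = \beta y^\top$ with $\alpha\beta > 0$ and $x, y$ positive vectors. So suppose $S \in \mathsf{GL}_m(\mathbb{C})$ and $T \in \mathsf{GL}_n(\mathbb{C})$ are Perron similarities witnessed by indices $k$ and $\ell$, with $S e_k = \alpha_S x_S$, $e_k^\top S^{-1} = \beta_S y_S^\top$, $T e_\ell = \alpha_T x_T$, $e_\ell^\top T^{-1} = \beta_T y_T^\top$. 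The natural candidate witnessing index for $S \otimes T$ is the index $p$ of the canonical basis vector $e_k \otimes e_\ell$ in $\mathbb{C}^{mn}$; concretely $p = (k-1)n + \ell$.

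The key computations are then the mixed-product identities for the Kronecker product. First, $(S \otimes T)(e_k \otimes e_\ell) = (S e_k) \otimes (T e_\ell) = (\alpha_S \alpha_T)(x_S \otimes x_T)$, and since the Kronecker product of two positive vectors is positive, this exhibits $S \otimes T$ applied to $e_p$ as a nonzero scalar times a positive vector. Second, using $(S \otimes T)^{-1} = S^{-1} \otimes T^{-1}$, we get $(e_k \otimes e_\ell)^\top (S^{-1} \otimes T^{-1}) = (e_k^\top S^{-1}) \otimes (e_\ell^\top T^{-1}) = (\beta_S \beta_T)(y_S \otimes y_T)^\top$, again a nonzero scalar times a positive (row) vector. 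Finally $(\alpha_S \alpha_T)(\beta_S \beta_T) = (\alpha_S \beta_S)(\alpha_T \beta_T) > 0$ as a product of two positive reals. By the sufficiency direction of \thref{perronsimcharacterization} (which only needs the existence of such an index, since uniqueness is automatic once the equivalence is established), $S \otimes T$ is a Perron similarity. One can alternatively give the even more transparent argument that $M_{e_k \otimes e_\ell}$ for $S \otimes T$ equals $M_{e_k}(S) \otimes M_{e_\ell}(T)$, which is a positive matrix (being the Kronecker product of two positive matrices $(\alpha_S\beta_S)x_S y_S^\top$ and $(\alpha_T\beta_T) x_T y_T^\top$); this directly produces an irreducible nonnegative matrix in $\mathcal{A}(S\otimes T)$.

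I expect no serious obstacle here — the result is essentially a bookkeeping exercise in the Kronecker-product calculus combined with the characterization theorem. The only point requiring a small amount of care is verifying that $e_k \otimes e_\ell$ is indeed the $p$-th canonical basis vector of $\mathbb{C}^{mn}$ under the block ordering used in the definition $A \otimes B = [a_{ij}B]$, and noting that positivity of a Kronecker product of positive vectors (or matrices) is immediate from the entrywise formula $[u \otimes v]_{(i-1)n+j} = u_i v_j$. If one prefers to invoke uniqueness of the index directly rather than appealing to the internal logic of \thref{perronsimcharacterization}, one observes that $e_k \otimes e_\ell$ spans the one-dimensional eigenspace of $M_{e_k}(S) \otimes M_{e_\ell}(T)$ for its Perron root, so the witnessing index is forced.
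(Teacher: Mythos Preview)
Your argument is correct. The paper itself does not spell out a proof of \thref{kronPS}; it simply cites \cite[Theorem 7]{dppt2022} and remarks that the argument there ``can be retooled'' to the present setting. Your route via \thref{perronsimcharacterization} is precisely such a retooling: you verify the column-and-row positivity criterion for $S\otimes T$ at the index corresponding to $e_k\otimes e_\ell$ using the mixed-product identity, which is clean and entirely self-contained within the paper's framework. The alternative you sketch---observing that $M_{e_k\otimes e_\ell}(S\otimes T)=M_{e_k}(S)\otimes M_{e_\ell}(T)$ is a positive (hence irreducible) matrix---is in fact exactly the computation hidden inside the sufficiency direction of \thref{perronsimcharacterization}, so the two versions are the same proof unwound to different depths. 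Either formulation is fine; no gap.
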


%--------------
\begin{theorem}
    [{\cite[Theorem 13]{dppt2022}}]
        \thlabel{kronideal}
        If $S$ and $T$ are ideal, then $S\otimes T$ is ideal. 
\end{theorem}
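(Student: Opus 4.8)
The plan is to show that the two conditions characterizing ideality in \thref{thm:idealsims} — namely $e \in \mathcal{C}_r(S\otimes T)$ and every row of $S\otimes T$ lying in $\mathcal{C}(S\otimes T)$ — are inherited from the corresponding conditions for $S$ and $T$. The two key structural facts I would use are: (a) the rows of $S\otimes T$ are exactly the Kronecker products $r_i(S)\otimes r_j(T)$, and more precisely $(S\otimes T)(e_a\otimes e_b) = (Se_a)\otimes(Te_b)$ and $r_i(S\otimes T) = r_i(S)\otimes r_j(T)$ under the standard index correspondence; and (b) the multiplicativity of the spectracone construction under Kronecker products, i.e. $M^{S\otimes T}_{x\otimes y} = (S\otimes T)D_{x\otimes y}(S\otimes T)^{-1} = (SD_xS^{-1})\otimes(TD_yT^{-1}) = M^S_x \otimes M^T_y$, using $D_{x\otimes y} = D_x\otimes D_y$, $(S\otimes T)^{-1} = S^{-1}\otimes T^{-1}$, and the mixed-product property $(A\otimes B)(C\otimes D) = (AC)\otimes(BD)$.

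First I would verify $e \in \mathcal{C}_r(S\otimes T)$. Since $S$ and $T$ are ideal, $e \in \mathcal{C}_r(S)$ and $e \in \mathcal{C}_r(T)$, so there are nonnegative vectors $u,v$ with $e^\top = u^\top S$ and $e^\top = v^\top T$. Then $(u\otimes v)^\top (S\otimes T) = (u^\top S)\otimes(v^\top T) = e^\top \otimes e^\top = e^\top$ (with $e$ of the appropriate size), and $u\otimes v \geqslant 0$; hence $e \in \mathcal{C}_r(S\otimes T)$. Equivalently, by \thref{cor:allonesextreme} and the corollary following \thref{coneandpoly}, one may note that if $e_k^\top S = e^\top$ and $e_\ell^\top T = e^\top$ then $(e_k\otimes e_\ell)^\top(S\otimes T) = e^\top$, so an all-ones row is literally present.

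Next I would verify that every row of $S\otimes T$ lies in $\mathcal{C}(S\otimes T)$. A generic row of $S\otimes T$ has the form $r_i(S)\otimes r_j(T)$. Since $S$ is ideal, $r_i(S) \in \mathcal{C}(S)$, so $M^S_{r_i(S)} \geqslant 0$; likewise $M^T_{r_j(T)} \geqslant 0$. By the multiplicativity noted above, $M^{S\otimes T}_{r_i(S)\otimes r_j(T)} = M^S_{r_i(S)}\otimes M^T_{r_j(T)}$, and a Kronecker product of two entrywise-nonnegative matrices is entrywise nonnegative; hence $r_i(S)\otimes r_j(T) \in \mathcal{C}(S\otimes T)$. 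Since this holds for all $i,j$, every row of $S\otimes T$ is realizable. Combining the two verifications with \thref{thm:idealsims} (and using \thref{kronPS} to know $S\otimes T$ is a Perron similarity in the first place) gives that $S\otimes T$ is ideal.

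The only real care needed is bookkeeping: confirming that the rows of $S\otimes T$, indexed by the lexicographic pairing of $[m]$ and $[p]$, are precisely $\{r_i(S)\otimes r_j(T)\}$, and that $D_{x\otimes y} = D_x\otimes D_y$ — both are routine consequences of the block definition $A\otimes B = [a_{ij}B]$. I would also remark that closure of the nonnegative cone under Kronecker products is immediate since each entry of $A\otimes B$ is a product $a_{ij}b_{k\ell}$ of nonnegative reals. So the expected main obstacle is not conceptual but simply stating the index correspondence cleanly; everything else reduces to the mixed-product property and \thref{thm:idealsims}.
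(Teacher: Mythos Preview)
Your argument is correct. The paper does not supply its own proof of \thref{kronideal}; it simply imports the result from \cite[Theorem 13]{dppt2022}, noting that the proof there ``can be retooled'' for the present definitions. Your verification via \thref{thm:idealsims}, using the mixed-product identity $M^{S\otimes T}_{x\otimes y} = M^S_x \otimes M^T_y$ (from $D_{x\otimes y}=D_x\otimes D_y$ and $(S\otimes T)^{-1}=S^{-1}\otimes T^{-1}$) together with the row decomposition $r_{(i,j)}(S\otimes T)=r_i(S)\otimes r_j(T)$, is exactly the natural retooling and matches the spirit of the cited argument. The invocation of \thref{kronPS} to ensure $S\otimes T$ is a Perron similarity before applying \thref{thm:idealsims} is the right hygiene step. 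Your ``equivalently'' remark about an all-ones row being literally present relies on the standing normalization convention (so that rows lie in $B^n$ and \thref{cor:allonesextreme} applies), but since the paper adopts that convention and since your primary argument via $u\otimes v$ does not need it, this is harmless.
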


%-----------------
\begin{definition}
    If $H = [h_{ij}] \in \mathsf{M}_n(\{ \pm 1 \})$, then $H$ is called a \emph{Hadamard matrix} if $HH^\top = nI_n$.    
\end{definition}

%-----------------
\begin{definition}
    \thlabel{walshmatrices}
    If $n \in \mathbb{N}_0$, then the matrix
    \[
    H_{2^n} \coloneqq
    \begin{bmatrix}
        1 & 1   \\
        1 & -1
    \end{bmatrix}^{\otimes n}    
    \]
    is called \emph{Sylvester's Hadamard matrix} or, for brevity, the \emph{Walsh matrix (of order $2^n$)}. 
\end{definition}

It is well-known that $H_{2^n}$ is a Hadamard matrix. Notice that $H_1 = [1]$, $H_2 = \left[\begin{array}{rr} 1 & 1 \\ 1 & -1 \end{array}\right]$
and 
\[ H_4 =
\left[ \begin{array}{*{4}{r}}
    1 & 1 & 1 & 1 \\
    1 & -1 & 1 & -1 \\
    1 & 1 & -1 & -1 \\
    1 & -1 & -1 & 1
\end{array} \right]. \] 
The theory of \emph{association schemes} was used to prove that these matrices are ideal  \cite[Proposition 5.1 and Theorem 5.2]{jp2016}. However, a proof of this is readily available via induction coupled with Theorems \ref{kronPS} and \ref{kronideal}.

%--------------
\begin{theorem}
    If $n\in \mathbb{N}_0$, then $H_{2^n}$ is ideal, extremal, and $\dim(\mathcal{C}(H_{2^n}))=2^n$.
\end{theorem}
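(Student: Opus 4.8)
The plan is to prove the three claims by induction on $n$, using the Kronecker-product machinery already in place. The base case $n=0$ is $H_1 = [1]$, which is trivially a Perron similarity with $\mathcal{C}(H_1) = \mathbb{R}_{\geqslant 0} = \conv(\{1\}) = \mathcal{C}_r(H_1)$, so it is ideal; for extremality at $n=0$ one should instead take $n=1$ as the base case, since $H_1$ has no extremal point other than $e$. So I would verify $n=1$ directly: $H_2 = \left[\begin{smallmatrix} 1 & 1 \\ 1 & -1 \end{smallmatrix}\right]$, whose row cone is $\coni((1,1)^\top, (1,-1)^\top)$, and one checks by hand that both rows lie in $\mathcal{C}(H_2)$ (e.g. $(1,-1)^\top$ gives $M_{(1,-1)^\top} = \left[\begin{smallmatrix} 0 & 1 \\ 1 & 0 \end{smallmatrix}\right] \geqslant 0$) and that $e^\top$ is a row, so by Theorems \ref{realizablerows} and \ref{allonesrow} $H_2$ is ideal; the point $(1,-1)^\top \in \mathcal{P}(H_2)$ is extremal (it is on the boundary of $B^2$, being a vertex), so $H_2$ is extremal; and $\dim(\mathcal{C}(H_2)) = \dim(\linearspan((1,1)^\top,(1,-1)^\top)) = 2$.

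For the inductive step, write $H_{2^{n}} = H_2 \otimes H_{2^{n-1}}$. Since $H_2$ and $H_{2^{n-1}}$ are Perron similarities (induction) and ideal (induction), Theorems \ref{kronPS} and \ref{kronideal} immediately give that $H_{2^n}$ is a Perron similarity and is ideal. For the dimension, because $H_{2^n}$ is ideal we have $\mathcal{C}(H_{2^n}) = \mathcal{C}_r(H_{2^n}) = \coni(r_1,\ldots,r_{2^n})$, so $\dim(\mathcal{C}(H_{2^n})) = \dim(\linearspan(r_1,\ldots,r_{2^n}))$; since $H_{2^n}$ is an invertible $2^n$-by-$2^n$ matrix, its rows span all of $\mathbb{C}^n$ as a real $\ldots$ — wait, more carefully: the rows are real vectors in $\mathbb{R}^{2^n}$ and they are linearly independent over $\mathbb{R}$ because $\det H_{2^n} \neq 0$, hence $\linearspan(r_1,\ldots,r_{2^n}) = \mathbb{R}^{2^n}$ which has real dimension $2^n$, giving $\dim(\mathcal{C}(H_{2^n})) = 2^n$.

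The remaining piece is extremality: I must show $\mathcal{P}(H_{2^n})$ has an extremal point other than $e$. Here I would exhibit an explicit row. The last row $r_{2^n}$ of $H_{2^n} = H_2^{\otimes n}$ has entries alternating in a Kronecker pattern and includes a $-1$; since $H_{2^n}$ is ideal, $r_{2^n} \in \mathcal{C}_r(H_{2^n}) = \mathcal{C}(H_{2^n})$, and since the first entry of $r_{2^n}$ is $1$, we have $M_{r_{2^n}} e = e$ (as in the proof of Theorem \ref{coneandpoly}), so $r_{2^n} \in \mathcal{P}(H_{2^n})$. It remains to see $r_{2^n}$ is extremal in $\mathbb{SL}_1^{2^n}$, i.e. $\alpha\pi_1(r_{2^n}) \notin \pi_1(\mathbb{SL}_1^{2^n})$ for $\alpha > 1$: but $\|\pi_1(r_{2^n})\|_\infty = 1$ (the entries are $\pm 1$), so $\|\alpha\pi_1(r_{2^n})\|_\infty = \alpha > 1$, which cannot be the non-Perron part of the spectrum of a stochastic matrix since every eigenvalue of a stochastic matrix lies in the closed unit disc. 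Hence $r_{2^n}$ is extremal and distinct from $e$, so $H_{2^n}$ is extremal.

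I expect the main obstacle to be bookkeeping rather than conceptual: making sure the base case is chosen so all three assertions hold simultaneously ($n=1$ works; $n=0$ fails extremality), and correctly citing that a row with a $-1$ entry yields a point of $\mathcal{P}(H_{2^n})$ — this uses the ideal property together with the computation $M_x e = Se_1 = e$ from the proof of Theorem \ref{coneandpoly}, valid because $x_1 = 1$. The extremality argument itself is then a one-line consequence of $\|\cdot\|_\infty = 1$ and the unit-disc containment of stochastic spectra.
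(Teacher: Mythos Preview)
Your proposal is correct and follows essentially the same approach the paper sketches: induction on $n$ combined with the Kronecker-product results \thref{kronPS} and \thref{kronideal}, with the base case and the extremality/dimension claims filled in explicitly. You also correctly flag that the extremality claim fails at $n=0$ (since $\mathcal{P}(H_1)=\{e\}$), a boundary case the paper's statement and sketch do not address.
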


Although every normalized Hadamard matrix is a Perron similarity, not every Hadamard matrix is ideal (it can be verified via the MATLAB-command \verb|hadamard(12)| that only the first row, i.e., the all-ones vector, of the normalized Hadamard matrix of order twelve is realizable). However, it is known that if $H$ is a Hadamard matrix, then $\conv(e,e_1-e_2,\ldots,e_1-e_n) \subseteq \mathcal{P}(H)$ \cite[Remark 6.4]{jp2016}. Furthermore, in view of \thref{allonesrow}, we have 
\[ \conv(e,e_1-e_2,\ldots,e_1-e_n) \subseteq \mathcal{P}(H) \subseteq \mathcal{P}_r(H). \]
Thus, every Hadamard matrix is extremal.

If $x \in \mathbb{C}^n$ and $\sigma_1,\ldots, \sigma_n \in \mathsf{Sym}(n)$, then a matrix of the form 
\[ X = 
\begin{bmatrix}
    \sigma_1(x)^\top \\
    \vdots          \\
    \sigma_n(x)^\top
\end{bmatrix} \]
is called a \emph{permutative matrix}. Notice that permutation matrices and circulant matrices are permutative matrices. 

If $y = Hx$ and $M_y = 2^{-n} H_{2^n} D_y H_{2^n}$, then $A$ is a permutative matrix \cite[p.~295]{jp2016}. For example, if $n=2$, then
\[ M_y = 
\begin{bmatrix}
    x_1 & x_2 \\
    x_2 & x_1
\end{bmatrix}
\]
and when $n=4$, 
\begin{equation} 
    \label{kleinfour} 
    M_y = 
    \begin{bmatrix}
        x_1 & x_2 & x_3 & x_4 \\
        x_2 & x_1 & x_4 & x_3 \\
        x_3 & x_4 & x_1 & x_2 \\
        x_4 & x_3 & x_2 & x_1
    \end{bmatrix}.
\end{equation}

In general, let 
\[ 
P_{1}^{(1)} := 
\begin{bmatrix}
1 & 0 \\
0 & 1
\end{bmatrix} \mbox{ and }
P_{2}^{(1)} := 
\begin{bmatrix}
0 & 1 \\
1 & 0
\end{bmatrix}. \] 
and for $n \geq 2$, let 
\[ P_{n,k} :=
\left\{ 
\begin{array}{rl} 
\begin{bmatrix} 
P_{(n-1),k} & 0             \\
0           & P_{(n-1),k} 
\end{bmatrix} \in \mathsf{M}_{2^n}(\mathbb{R}), & k \in \bracket{2^{n-1}}						\\ \\ 
\begin{bmatrix} 
0                       & P_{(n-1),k - 2^{n-1}} \\ 
P_{(n-1), k - 2^{n-1}}  & 0 
\end{bmatrix} \in \mathsf{M}_{2^n}(\mathbb{R}), & k \in \bracket{2^n}\backslash \bracket{2^{n-1}},
\end{array} \right. \]
If $y = Hx$, then 
\begin{equation}
    \label{genkleinmatrix}
M_y = 
\begin{bmatrix}
    x^\top P_{n,1} \\
    \vdots      \\
    x^\top P_{n,2^k} 
\end{bmatrix}.
\end{equation}
Kalman and White \cite{kw2001} called the matrix \eqref{kleinfour} a \emph{Klein matrix}. As such, we call any matrix of the form given in \eqref{genkleinmatrix} a \emph{Klein matrix of order $2^n$}. 

%---------------------------
\section{Circulant Matrices}
%---------------------------

In what follows, we let $F = F_n$ denote the $n$-by-$n$ \emph{discrete Fourier transform matrix}, i.e., $F$ is the $n$-by-$n$ matrix such that 
\[ f_{ij} = \frac{1}{\sqrt{n}} \omega^{(i-1)(j-1)}, \]
with 
$$\omega = \omega_n \coloneqq \cos \left( \frac{2\pi}{n} \right) - \sin \left( \frac{2\pi}{n} \right)\ii.$$ 
If $n > 1$, then 
\begin{align*}
    F 
    &=\frac{1}{\sqrt{n}}
    \kbordermatrix{
            & 1      & 2            & \cdots & k                    & \cdots & n                    \\
    1       & 1      & 1            & \cdots & 1                    & \cdots & 1                    \\
    2       & 1      & \omega       & \cdots & \omega^{k-1}         & \cdots & \omega^{n-1}         \\
    \vdots  & \vdots & \vdots       & \ddots & \vdots               &        & \vdots               \\
    k       & 1      & \omega^{k-1} & \cdots & \omega^{(k-1)^2}     & \cdots & \omega^{(k-1)(n-1)}  \\
    \vdots  & \vdots & \vdots       &        & \vdots               & \ddots & \vdots               \\
    n       & 1      & \omega^{n-1} & \cdots & \omega^{(k-1)(n-1)}  & \cdots & \omega^{(n-1)(n-1)} 
    }                                                                                                       \\
    &= \frac{1}{\sqrt{n}}
    \kbordermatrix{
        & 1 & 2         & \cdots & k                    & \cdots    & n             \\
        & e & v_\omega  & \cdots & v_\omega^{k-1}       & \cdots    & v_\omega^{n-1} 
   },  
\end{align*} 
with
\[ v_\omega \coloneqq  
    \kbordermatrix{
            &             \\
    1       & 1           \\
    2       & \omega      \\
    \vdots  & \vdots      \\
    k       & \omega^{k-1} \\
    \vdots  & \vdots       \\
    n       & \omega^{n-1}  
    } . \]
Because $F$ is symmetric and unitary \cite[Theorem 2.5.1]{d1979}, it follows that $F^{-1} = F^\ast = \overline{F^\top} = \overline{F}$.

Since $\omega^k \cdot \omega^{n-k} = 1 = \omega^k \cdot \overline{\omega^k}$, it follows that $\omega^{n-k} = \overline{\omega^k}$. Consequently, 
\begin{equation}
    \label{vomega}
    v_\omega^{n-k} = \overline{v_w^k},\ 1 \leqslant k \leqslant n-1.  
\end{equation}
Thus, 
$$F = \frac{1}{\sqrt{n}}
\kbordermatrix{
    & 1 & 2         & 3          &  \cdots  & n-1               & n                 \\
    & e & v_\omega  & v_\omega^2 & \cdots   & \overline{v_w^2}  & \overline{v_w} 
}.$$

%-----------------
\begin{definition}
    [{\cite[p.~66]{d1979}}]
    \label{circulant-def-1}
    If $c = \begin{bmatrix} c_1 & \cdots & c_n \end{bmatrix}^\top \in \mathbb{C}^n$ and $C \in \mathsf{M}_n(\mathbb{C})$ is a matrix such that
        \[ c_{ij} = c_{((j-i)\bmod{n}) + 1},\ \forall (i,j) \in \bracket{n}^2, \]
    then $C$ is called a \emph{circulant} or a \emph{circulant matrix} with \emph{reference vector} $c$. In such a case, we write $C = \circulant(c) = \circulant(c_1,\ldots,c_n)$. 
\end{definition}

If $C \in \mathsf{M}_n(\mathbb{C})$, then $C$ is circulant if and only if there is a diagonal matrix $D$ such that $C = FDF^\ast = FD\overline{F}$ \cite[Theorems 3.2.2 and 3.2.3]{d1979}.

Recall that $A=[A_{ij}]$, with $A_{ij} \in \mathsf{M}_n(\mathbb{C})$ is called (an $m$-by-$m$) \emph{block matrix}. If $C =  \circulant(C_1,\ldots,C_m)$ with $C_k \in \mathsf{M}_n(\mathbb{C}),\ \forall k \in \bracket{m}$, then the block matrix $C$ is called \emph{block-circulant} or a \emph{block-circulant matrix} of type $(m,n)$ \cite[\S 5.6]{d1979}. The set of block-circulant matrices of type $(m,n)$ is denoted by $\mathcal{BC}_{mn}$. %$\mathscr{BC}_{(m,n)}$. 

If $C = [C_{ij}]$ is an $m$-by-$m$ block matrix and $C_{ij}$ is circulant, then $C$ is called a \emph{circulant block matrix} and the set of such matrices is denoted by $\mathcal{CB}_{mn}$ \cite[\S 5.7]{d1979}.     

Combining the previous sets yields the class of \emph{block circulant matrices with circulant blocks}, i.e., block matrices of the form $C =  \circulant(C_1,\ldots,C_m)$, where $C_1,\ldots,C_m$ are circulant. The set of such matrices is denoted by $\mathcal{BCCB}_{mn}$. All matrices in $\mathcal{BCCB}_{mn}$ are simultaneously diagonalizable by the unitary matrix $F_m \otimes F_n$ and any matrix of the form 
\[ (F_m\otimes F_n) D (\overline{F_m}\otimes \overline{F_n}), \]
with $D$ diagonal, belongs to $\mathcal{BCCB}_{mn}$ \cite[Theorem 5.8.1]{d1979}. 

%------------------------------------------------
\section{Perron Similarities arising from K-arcs}
%------------------------------------------------

In this section, we examine Perron similarities from realizing matrices of points on Type I K-arcs. Attention is focused on Type I K-arcs because many Type II arcs and Type III arcs are pointwise powers of Type I arcs (see, e.g., Munger et al.~\cite[Section 5]{mnp2024}).

%------------------
\subsection{Type 0} 

Points on the Type 0 arc are zeros of the reduced Ito polynomial $f_\alpha(t) = (t  - \beta)^n - \alpha^n$, where $\alpha \in [0,1]$ and $\beta \coloneqq 1 - \alpha$. In \cite{jp2017} it was showed (and it is easy to verify otherwise) that the matrix 
\[ 
M = M(\alpha) \coloneqq 
\begin{bmatrix}
\beta & \alpha & 	 	\\
 & \beta & \alpha 		\\
 & & \ddots & \ddots 	\\
 & & & \beta & \alpha 	\\
\alpha & & & & \beta
\end{bmatrix} \]
realizes this arc, i.e., $\chi_M = f_\alpha$. Because $M$ is a circulant, there is a diagonal matrix $D$ such that $M = F D F^\ast$. As $F$ is a scaled Vandermonde matrix, we defer its discussion to the subsequent section. 

%------------------
\subsection{Type I}
\label{type1} 

For Type I arcs, the reduced Ito polynomial is $f_\alpha(t) = t^{s}  - \beta  t^{s-q} - \alpha$, with $\alpha \in [0,1]$ and $\beta \coloneqq 1 -\alpha$. If 
\[ M = M(\alpha) \coloneqq 
\begin{bmatrix}
0 & I \\
\alpha & \beta e_{s-q}^\top 
\end{bmatrix}, \label{typeonemats} \]
then $M$ is a nonnegative irreducible companion matrix and $\chi_M = f_\alpha$. Following Remark \ref{rem:distinctroots}, there are at most $2s-1$ complex values, and hence at most $2s-1$ values in $[0,1]$, such that $f_\alpha$ does not have distinct roots. Notice that $f_\alpha(1) = 0$ and if $1, \lambda_2, \dots,\lambda_s$ are the distinct zeros of $f_\alpha$, then the Vandermonde matrix 
\begin{equation}
S = S(\alpha) =
\begin{bmatrix}
1 & 1 & \cdots & 1                  			\\
1 & \lambda_2 & \cdots & \lambda_s              \\
\vdots & \vdots & \ddots & \vdots   			\\
1 & \lambda_2^{s-1} & \cdots & \lambda_s^{s-1} 
\end{bmatrix} \in \mathsf{GL}_{s}(\mathbb{C})				\label{vandermonde}
\end{equation}
is a Perron similarity satisfying $M = S \diag{\left( 1,\lambda_2,\dots,\lambda_s \right)} S^{-1}$. The Perron similarity $S$ is ideal because every row is realizable (the $k\textsuperscript{th}$-row forms the spectrum of $M^k \geqslant 0$) and the first row is $e$. Furthermore, it is extremal because the second row is extremal.

%----------------
\begin{corollary}
    \thlabel{TypeIsims}
    Let $f_\alpha$ be defined as in \eqref{polyalpha}, and let $S = S(\alpha)$ be the Vandermonde matrix defined as in $\eqref{vandermonde}$ corresponding to the zeros of $f_\alpha$. 
        \begin{enumerate}
        [label=(\roman*)]
        \item If $n$ is even, then $S$ is ideal and extremal.
        
        \item If $n$ is odd and $\alpha \geqslant \beta$, then $S$ is ideal and extremal.
        
        \item If $n$ is odd, $\alpha < \beta$, and $(n-1)^{n-1} \beta^n - n^n \alpha^{n-1} \neq 0$, then  $S$ is ideal and extremal.   
        \end{enumerate}
\end{corollary}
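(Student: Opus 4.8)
The plan is to reduce \thref{TypeIsims} directly to the general facts already assembled in Section~\ref{type1}, invoking Proposition~\ref{distincteigs} to supply the hypothesis under which those facts apply. The key observation is that the discussion preceding the corollary establishes an \emph{unconditional} claim: \emph{whenever} $f_\alpha$ has $s$ distinct zeros $1,\lambda_2,\dots,\lambda_s$, the associated Vandermonde matrix $S=S(\alpha)$ in \eqref{vandermonde} is a Perron similarity with $M=S\,\diag(1,\lambda_2,\dots,\lambda_s)\,S^{-1}$, and moreover $S$ is ideal (its first row is $e^\top$, and its $k$th row is the spectrum of the nonnegative matrix $M^k$, hence lies in $\mathcal{C}(S)$, so \thref{thm:idealsims} applies) and extremal (its second row is $(1,\lambda_2,\dots,\lambda_s)$, which contains the extremal point $\lambda_2$ of $\Theta_n$ lying on the Type~I arc $K_n(1/n,1/(n-1))$, so the relevant observation on extremality forces $S$ to be extremal). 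Here $f_\alpha$ in \eqref{polyalpha} is the reduced Type~I Ito polynomial with $q=1$, $s=n$, so $S$ is $n$-by-$n$. Thus the only thing that must be checked in each of the three cases is that $f_\alpha$ genuinely has $n$ distinct roots.

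First I would handle cases (i) and (ii): by parts (i) and (ii) of Proposition~\ref{distincteigs}, $f_\alpha$ has $n$ distinct roots when $n$ is even, and when $n$ is odd with $\alpha\geqslant\beta$. In either case the unconditional paragraph above applies verbatim, giving that $S(\alpha)$ is ideal and extremal. Second, for case (iii), $n$ is odd and $\alpha<\beta$; by part (iii) of Proposition~\ref{distincteigs}, $f_\alpha$ has a multiple root if and only if $n^n\alpha^{n-1}-(n-1)^{n-1}\beta^n=0$, equivalently $(n-1)^{n-1}\beta^n-n^n\alpha^{n-1}=0$. Since we are assuming $(n-1)^{n-1}\beta^n-n^n\alpha^{n-1}\neq 0$, the polynomial $f_\alpha$ has $n$ distinct roots, and again the unconditional paragraph applies. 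In all three cases one should also note $n\geqslant 4$ as required by Proposition~\ref{distincteigs}; this is the standing hypothesis of \eqref{polyalpha}, so no separate argument is needed, and the degenerate small-$n$ situations need not be addressed.

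There is essentially no obstacle here: the corollary is a bookkeeping consequence of Proposition~\ref{distincteigs} together with the paragraph in Section~\ref{type1} that already did the substantive work (identifying $S$ as a Perron similarity diagonalizing the companion/realizing matrix $M$, verifying idealness via realizability of the power rows, and verifying extremality via the second row). If anything needs a word of care, it is making explicit that the zero $\lambda_2$ appearing in the second row of $S$ is precisely a point of the Type~I K-arc whose endpoints are $e^{2\pi \ii/n}$ and $e^{2\pi\ii/(n-1)}$ — i.e., that $\lambda_2$ can be taken to be an extremal element of $\Theta_n$ in the sense of Section~4.4 — so that the stated extremality of $S$ is legitimate; this is exactly the content already asserted just above the corollary, so I would simply cite it. Hence the proof is short: \emph{Immediate from Proposition~\ref{distincteigs} and the preceding discussion.}
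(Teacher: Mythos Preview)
Your proposal is correct and matches the paper's approach exactly: the paper states \thref{TypeIsims} without proof, treating it as an immediate consequence of Proposition~\ref{distincteigs} together with the preceding paragraph in Section~\ref{type1}, which is precisely the reduction you describe.
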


%-------------------------------------------
\section{Circulant and Block-Circulant NIEP}
%-------------------------------------------

Since $S \sim \alpha S$, with a slight abuse of notation we let $F = F_n = [f_{ij}]$ denote the $n$-by-$n$ matrix such that $f_{ij} = \omega^{(i-1)(j-1)}$. Notice that $F$ is a Vandermonde matrix corresponding to the $n$ distinct $n\textsuperscript{th}$-roots of unity, i.e, 
\begin{align*} 
F
&=
\kbordermatrix{
        & 1      & 2            & \cdots & k                    & \cdots & n                    \\
1       & 1      & 1            & \cdots & 1                    & \cdots & 1                    \\
2       & 1      & \omega       & \cdots & \omega^{k-1}         & \cdots & \omega^{n-1}         \\
\vdots  & \vdots & \vdots       & \ddots & \vdots               &        & \vdots               \\
k       & 1      & \omega^{k-1} & \cdots & \omega^{(k-1)^2}     & \cdots & \omega^{(k-1)(n-1)}  \\
\vdots  & \vdots & \vdots       &        & \vdots               & \ddots & \vdots               \\
n       & 1      & \omega^{n-1} & \cdots & \omega^{(k-1)(n-1)}  & \cdots & \omega^{(n-1)^2} 
} \\
&=
\kbordermatrix{
& 1 & 2         & 3          &  \cdots  & n-1               & n                 \\
& e & v_\omega  & v_\omega^2 & \cdots   & \overline{v_w^2}  & \overline{v_w} 
},\ n > 1.
\end{align*}
It is easy to show that
\begin{equation}
    \label{Finv}
    F^{-1} = \frac{1}{n} \overline{F}, 
\end{equation}
\begin{equation}
    \label{FmFninv}
    (F_m \otimes F_n)^{-1} = \frac{1}{mn} \left( \overline{F_m} \otimes \overline{F_n} \right),    
\end{equation}
and $C \in \mathsf{M}_n(\mathbb{C})$ is circulant if and only if there is a diagonal matrix $D$ such that $C = FDF^{-1}$. 

Furthermore, if $y \in \mathbb{C}^n$, $x = Fy$, and $M_x = F D_x F^{-1}$, then, following \eqref{Mijentry},  
\begin{equation}
    \label{M1jentry}
    [M_x]_{1,j} 
    = \left\langle x, \overline{e \circ \frac{\overline{f_j}}{n}} \right\rangle 
    = \left\langle x, \frac{f_j}{n} \right\rangle 
    =\frac{1}{n}\overline{f_j}^\top (Fy)
    = y_j,\ \forall j \in \bracket{n}
\end{equation} 
i.e., $M_x = \circulant y$. 

%----------------
\begin{corollary}
    [extreme ray and vertex representation]
    \thlabel{dftconetope}
    If $n \in \mathbb{N}$, then $F$ is ideal, extremal, and $\dim \mathcal{C}(F) = n$.
\end{corollary}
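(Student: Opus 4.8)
The plan is to identify $F = F_n$ (the unnormalized DFT matrix) as a special case of the Vandermonde Perron similarities discussed in the preceding section, and to directly verify the three conclusions — that $F$ is ideal, extremal, and that $\mathcal{C}(F)$ has full dimension $n$.

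First I would observe that $F$ is a Perron similarity: its first column is $e$, which is the (right) Perron vector of the circulant $\operatorname{circ}(e_1) = $ cyclic shift (up to normalization), and by \eqref{Finv} the first row of $F^{-1} = \tfrac{1}{n}\overline{F}$ is $\tfrac{1}{n}e^\top$, which is positive; thus \thref{perronsimcharacterization} applies with $k=1$, $\alpha = 1$, $\beta = 1/n$, and $\mathcal{C}(F)$ is nontrivial. Next, for idealness I would invoke \thref{thm:idealsims}: the first row of $F$ is $e^\top$, so $e \in \mathcal{C}_r(F)$ is immediate; and each row $r_i = v_\omega^{i-1}$ is realizable because it is, up to the Hadamard-power structure, the spectrum of a power of the basic cyclic permutation matrix $P = \operatorname{circ}(0,1,0,\ldots,0)$ — indeed $P$ is nonnegative, irreducible, and $P = F D_{v_\omega} F^{-1}$, so $P^{i-1} = F D_{v_\omega^{\,i-1}} F^{-1} \geqslant 0$, which by \eqref{M1jentry} (or directly) shows $v_\omega^{i-1} \in \mathcal{C}(F)$ for all $i \in [n]$. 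Hence by \thref{realizablerows} and \thref{allonesrow}, $\mathcal{C}_r(F) \subseteq \mathcal{C}(F)$ and $\mathcal{C}(F) \subseteq \mathcal{C}_r(F)$, so $F$ is ideal; equivalently $\mathcal{P}(F) = \mathcal{P}_r(F) = \operatorname{conv}(e, v_\omega, \ldots, v_\omega^{n-1})$ by \thref{coneandpoly}.

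For extremality, I would exhibit an extremal point of $\mathcal{P}(F)$ other than $e$. The natural candidate is $r_2 = v_\omega = (1, \omega, \omega^2, \ldots, \omega^{n-1})^\top$, whose entries are the $n$-th roots of unity. These all lie on the unit circle $|z|=1$, which is the boundary of the Karpelevi\v{c} region's containing disc, and in fact $\omega = e^{-2\pi\ii/n}$ is an extremal point of $\Theta_n$ (it is a vertex where the region meets the unit circle, corresponding to a Farey fraction). Then by the Observation preceding Section 4 (if $x \in \mathbb{SL}_1^n$ and some $x_k$ is extremal in $\Theta_n$, then $x \in \mathbb{E}_n$), the vector $v_\omega$ is extremal in $\mathbb{SL}_1^n$; since $v_\omega \in \mathcal{P}(F)$ and $v_\omega \neq e$, the Perron similarity $F$ is extremal. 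Finally, $\dim \mathcal{C}(F) = n$ follows because $\mathcal{C}(F) = \mathcal{C}_r(F) = \operatorname{coni}(e, v_\omega, \ldots, v_\omega^{n-1})$, and by the definition of dimension for a conical hull, $\dim \mathcal{C}(F) = \dim(\operatorname{span}(e, v_\omega, \ldots, v_\omega^{n-1}))$, which equals $n$ because these are exactly the columns of the nonsingular Vandermonde matrix $F^\top$ (the roots $1, \omega, \ldots, \omega^{n-1}$ are distinct), so they are linearly independent and span $\mathbb{C}^n$.

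The main obstacle I anticipate is pinning down the extremality claim cleanly: one must be careful that $\omega = e^{-2\pi\ii/n}$ is genuinely an \emph{extremal} point of $\Theta_n$ (not merely a boundary point), which is true since it is one of the finitely many points where $\partial\Theta_n$ touches the unit circle, and any such point has $|\alpha\omega| > 1$ for $\alpha > 1$ so $\alpha\omega \notin \Theta_n$. A secondary subtlety is the normalization: the excerpt has silently rescaled $F$ by $\sqrt{n}$ (using $S \sim \alpha S$ and part \ref{alphaS} of \thref{conetransforms}), so $\mathcal{C}(F)$ is unchanged; I would note this explicitly so that \eqref{Finv} and \eqref{M1jentry} are available in the form used. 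Everything else is a routine assembly of the already-established machinery (\thref{thm:idealsims}, \thref{coneandpoly}, \thref{perronsimcharacterization}, and the dimension formula for conical hulls).
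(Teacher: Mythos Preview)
Your proposal is correct and follows essentially the same approach as the paper. The paper's proof simply cites \thref{TypeIsims} (or \eqref{M1jentry}) for idealness, notes that every entry of $F$ is extremal in $\Theta_n$ for extremality, and appeals to invertibility of $F$ for the dimension claim; you have unpacked the first of these by explicitly verifying the hypotheses of \thref{thm:idealsims} via powers of the cyclic permutation matrix, which is exactly the mechanism underlying \thref{TypeIsims} in the Vandermonde setting.
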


\begin{proof}
    The claim that $F$ is ideal is immediate from \thref{TypeIsims} or \eqref{M1jentry}. Clearly, $F$ is extremal since every entry of $F$ is extremal in $\Theta_n$. Finally, $\dim \mathcal{C}(F) = n$ because $F$ is invertible.        
\end{proof}

%--------------
\begin{theorem}
    [half-space description]
    If $n$ is a positive integer, then
    $$\mathcal{C}(F) = \bigcap_{k \in \bracket{n}} \mathsf{H}_k,$$
    where 
    $$\mathsf{H}_k \coloneqq \mathsf{H}({f_k}) \cap \mathsf{H}(\ii {f_k}) \cap \mathsf{H}(-\ii {f_k})$$
    and $f_k \coloneqq F e_k$, $k \in \bracket{n}$.
\end{theorem}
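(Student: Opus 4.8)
The plan is to apply the half-space characterization of $\mathcal{C}(S)$ established in the proof of \thref{CSpolyconePSpolytope} to the particular matrix $S = F$. Recall from that proof that, writing $t_{ij}$ for the entries of $F^{-1}$, one has $x \in \mathcal{C}(S)$ if and only if $x$ lies in $\mathsf{H}(\overline{s_i \circ t_j}) \cap \mathsf{H}(\ii\cdot\overline{s_i \circ t_j}) \cap \mathsf{H}(-\ii\cdot\overline{s_i \circ t_j})$ for all $(i,j) \in \bracket{n}^2$, where $s_i$ is the $i$-th row of $S$ (as a column vector) and $t_j$ is the $j$-th column of $S^{-1}$. So I would first specialize this to $S = F$: since $F$ is symmetric, the $i$-th row of $F$ equals $F e_i = f_i$, and by \eqref{Finv} the $j$-th column of $F^{-1} = \tfrac{1}{n}\overline{F}$ is $\tfrac{1}{n}\overline{f_j}$ (again using symmetry). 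Hence $\overline{s_i \circ t_j} = \overline{f_i \circ \tfrac{1}{n}\overline{f_j}} = \tfrac{1}{n}\,\overline{f_i}\circ f_j$.

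The key step is then to recognize that the generating vector $\overline{f_i} \circ f_j$ is, up to scaling, just another one of the $f_k$'s. Indeed, the $(\ell,i)$-entry of $F$ is $\omega^{(\ell-1)(i-1)}$, so $[\overline{f_i}]_\ell = \overline{\omega^{(\ell-1)(i-1)}} = \omega^{-(\ell-1)(i-1)}$ and $[f_j]_\ell = \omega^{(\ell-1)(j-1)}$; multiplying entrywise gives $[\overline{f_i}\circ f_j]_\ell = \omega^{(\ell-1)(j-i)} = \omega^{(\ell-1)((j-i)\bmod n)} = [f_{((j-i)\bmod n)+1}]_\ell$. Therefore $\overline{f_i}\circ f_j = f_k$, where $k = ((j-i)\bmod n) + 1$. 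As $(i,j)$ ranges over $\bracket{n}^2$, the index $k$ ranges over all of $\bracket{n}$ (e.g. fix $i = 1$). Since $\mathsf{H}(a)$ depends on $a$ only up to positive scaling — $\mathsf{H}(\tfrac{1}{n}a) = \mathsf{H}(a)$ — and since $\mathsf{H}(\ii\cdot\tfrac{1}{n}a) = \mathsf{H}(\ii a)$ likewise, the intersection over all $(i,j)\in\bracket{n}^2$ collapses to the intersection over $k\in\bracket{n}$ of $\mathsf{H}(f_k)\cap\mathsf{H}(\ii f_k)\cap\mathsf{H}(-\ii f_k) = \mathsf{H}_k$, which is exactly the asserted identity.

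The main obstacle is bookkeeping rather than depth: I need to be careful that the $(i,j)$-indexed generators really do reduce to a set indexed only by the difference $j - i \pmod n$, and that this set is precisely $\{f_1,\dots,f_n\}$ with no omissions — this is the circulant structure of $\mathcal{C}(F)$ showing through, consistent with \eqref{M1jentry}, which already tells us $M_x = \circulant(y)$ when $x = Fy$. One should also note $f_1 = e$, so $\mathsf{H}_1 = \mathsf{H}(e)\cap\mathsf{H}(\ii e)\cap\mathsf{H}(-\ii e)$; by \eqref{realandimag} this is the constraint $\Re(\langle e, x\rangle)\ge 0$ together with $\sum_k x_k$ being real, which is harmless and already implied by membership in the cone. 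Finally, I would remark that this gives at most $3n$ half-spaces cutting out $\mathcal{C}(F)$, dovetailing with \thref{dftconetope}: the extreme-ray description $\mathcal{C}(F) = \coni(f_1,\dots,f_n)$ and this half-space description are the two dual presentations of the same polyhedral cone.
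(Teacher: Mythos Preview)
Your argument is correct and takes a genuinely different route from the paper's. The paper proves both inclusions directly: writing $x = Fy$, it computes $\langle x, f_k\rangle = ny_k$ and uses this single identity to show that $x\in\mathcal{C}(F)$ (i.e., $y\geqslant 0$) is equivalent to membership in each $\mathsf{H}_k$, arguing the converse by contradiction on the sign of $\Re y_k$ or $\Im y_k$. Your approach instead specializes the general half-space description from \thref{CSpolyconePSpolytope}, obtaining $3n^2$ half-spaces indexed by $(i,j)\in\bracket{n}^2$, and then collapses them to $3n$ via the Hadamard-product identity $\overline{f_i}\circ f_j = f_{((j-i)\bmod n)+1}$. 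This is a clean structural explanation: the redundancy is exactly the circulant structure already visible in \eqref{M1jentry}. The paper's computation $\langle x,f_k\rangle = ny_k$ is arguably more transparent for later use, but your route has the advantage of being fully modular---it derives the theorem as a corollary of the general machinery plus a single algebraic fact about the columns of $F$, and the same template would apply verbatim to $F_m\otimes F_n$ in the next section.
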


\begin{proof}
    If $x \in \mathcal{C}(F)$, then, by \thref{dftconetope}, there is a nonnegative vector $y$ such that $x^\top = y^\top F$. Since $F$ is symmetric, it follows that $x = Fy$. Since  
    \begin{equation}
        \label{yk}
        \langle x, f_k \rangle 
        = \left( F e_k \right)^\ast (F y)             
        = \left( e_k^\top  \overline{F} \right) (F y) 
        = e_k^\top \left(\left( \overline{F}F \right) y \right)  
        = e_k^\top ((n I_n)y)                  
        = n y_k, 
    \end{equation}
    it follows that $\Re(\langle x, {f_k} \rangle) \geqslant 0$ and $x \in \mathsf{H}(f_k)$. Because $\langle x, \pm \ii f_k \rangle = \mp \ii \langle x, f_k \rangle = \mp \ii n y_k$, it follows that $\Re (\langle x, \pm \ii f_k \rangle ) \geqslant 0$, i.e., $x \in \mathsf{H}(\pm \ii f_k)$. As $k$ and $x$ were arbitrary, $\mathcal{C}(F) \subseteq \bigcap_{k \in \bracket{n}} \mathsf{H}_k$.
    
    Let $x \in \bigcap_{k \in \bracket{n}} \mathsf{H}_k$ and suppose, for contradiction, that $x \notin \mathcal{C}(F)$. Because $F$ is invertible, there is a unique complex vector $y$ such that $x = Fy$, and since $F$ is symmetric, we have $x^\top = y^\top F$. As $x \notin \mathcal{C}(F)$, it follows that $a \coloneqq \Re y \not \geqslant 0$ or $b \coloneqq \Im y \ne 0$. Thus, $\exists k \in \bracket{n}$ such that $\Re y_k = a_k < 0$ or $\Im y_k = b_k \ne 0$. Notice that $\langle x, f_k \rangle = ny_k,\ \forall k \in \bracket{n}$ because the calculations in \eqref{yk} do not rely on the nonnegativity of $y$. If $a_k < 0$, then $\langle x, f_k \rangle = n y_k = n(a_k + \ii b_k)$ implies $\Re (\langle x, f_k \rangle) = n a_k < 0$, a contradiction since $x \in \mathsf{H}(f_k)$. Otherwise, if $b_k \ne 0$, then $b_k < 0$ or $b_k > 0$; if $b_k < 0$, then the equation 
    $$\langle x, \ii f_k \rangle = -\ii \langle x, f_k \rangle = -\ii n y = n(b_k - \ii a_k)$$ 
    implies $\Re (\langle x, \ii f_k \rangle) = n b_k < 0$, a contradiction since $x \in \mathsf{H}(\ii f_k)$; if $b_k > 0$, then the equation 
    $$\langle x, -\ii f_k \rangle = \ii \langle x, f_k \rangle = \ii n y = n(-b_k + \ii a_k)$$ 
    implies $\Re (\langle x, -\ii f_k \rangle) = -n b_k < 0$, a contradiction since $x \in \mathsf{H}(-\ii f_k)$. Thus, $\mathcal{C}(F) \supseteq \bigcap_{k \in \bracket{n}} \mathsf{H}_k$ and the demonstration is complete.
\end{proof}

%------------------
\begin{proposition}
    \thlabel{propF}
    If $n \in \mathbb{N}$, then:
        \begin{enumerate}
            [label=(\roman*)]
            \item \label{FequalsconjF} $\mathcal{C}(F) = \mathcal{C}\left(\overline{F}\right)$;
            \item \label{barFideal} $\overline{F}$ is ideal; and 
            \item $\mathcal{P}(\overline{F}) = \mathcal{P}_r(\overline{F})$.
        \end{enumerate}
\end{proposition}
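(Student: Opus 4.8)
The plan is to obtain all three statements from results already in hand---chiefly \thref{conetransforms}, the fact that $F$ is ideal (\thref{dftconetope}), and \thref{coneandpoly}---so that essentially no new computation is needed beyond recording how complex conjugation interacts with $\coni(\cdot)$, $\mathcal{C}_r(\cdot)$, $\mathcal{C}(\cdot)$, and $\mathcal{P}(\cdot)$. A preliminary observation I would make up front is that $\overline{F}$ is itself a Perron similarity: taking $k = 1$ in \thref{perronsimcharacterization}, the first column of $\overline{F}$ is $e$ and $e_1^\top \overline{F}^{-1} = \frac{1}{n}\,e_1^\top F = \frac{1}{n}\,e^\top$, so the hypotheses hold with $\alpha = 1$, $\beta = 1/n$, and $x = y = e$. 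This is needed so that the word ``ideal'' and \thref{coneandpoly} apply to $\overline{F}$.

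For \ref{FequalsconjF}, I would combine $\overline{F} = nF^{-1}$ (from \eqref{Finv}) with the symmetry $F^\top = F$:
\[ \mathcal{C}(\overline{F}) = \mathcal{C}(nF^{-1}) = \mathcal{C}(F^{-1}) = \mathcal{C}(F^\top) = \mathcal{C}(F), \]
where the second and third equalities are parts \ref{alphaS} and \ref{Sinv} of \thref{conetransforms}. (Together with part \ref{Sconj} this incidentally shows $\mathcal{C}(F) = \overline{\mathcal{C}(F)}$.)

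For \ref{barFideal}, I would chain three equalities: $\mathcal{C}(\overline{F}) = \overline{\mathcal{C}(F)}$ by part \ref{Sconj} of \thref{conetransforms}; $\mathcal{C}(F) = \mathcal{C}_r(F)$ because $F$ is ideal; and $\overline{\mathcal{C}_r(F)} = \mathcal{C}_r(\overline{F})$ because the rows of $\overline{F}$ are the conjugates of the rows of $F$ and conjugation commutes with nonnegative conical combinations. Hence $\mathcal{C}(\overline{F}) = \mathcal{C}_r(\overline{F})$, i.e., $\overline{F}$ is ideal. (One could alternatively note, via \eqref{vomega}, that the multiset of columns of $F$ is invariant under conjugation, so $\mathcal{C}_r(\overline{F}) = \mathcal{C}_r(F)$, and then invoke \ref{FequalsconjF}.) Finally, since $\overline{F}$ is a Perron similarity and ideal, \thref{coneandpoly} gives $\mathcal{P}(\overline{F}) = \mathcal{P}_r(\overline{F})$, which is the last part.

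As for difficulty: there is no real obstacle, the content being bookkeeping. The two points that need care are (a) verifying at the outset that $\overline{F}$ is a Perron similarity, since ``ideal'' is only defined for Perron similarities and \thref{coneandpoly} carries that hypothesis; and (b) keeping the conjugate/transpose/inverse identities for $F$ straight (using $F^\top = F$ and $\overline{F} = nF^{-1}$), where a stray factor of $n$ or a dropped conjugate would be the likeliest slip.
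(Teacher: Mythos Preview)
Your proposal is correct and follows essentially the same route as the paper: part \ref{FequalsconjF} via the chain $\mathcal{C}(\overline{F}) = \mathcal{C}(F^{-1}) = \mathcal{C}(F^\top) = \mathcal{C}(F)$, part \ref{barFideal} via $\mathcal{C}(\overline{F}) = \overline{\mathcal{C}(F)} = \overline{\mathcal{C}_r(F)} = \mathcal{C}_r(\overline{F})$, and part (iii) by \thref{coneandpoly}. Your explicit verification that $\overline{F}$ is a Perron similarity is a point the paper leaves implicit but which, as you note, is formally required for the word ``ideal'' and for \thref{coneandpoly} to apply.
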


\begin{proof}
    \begin{enumerate}
        [label=(\roman*)]
        \item Notice that
        \begin{align*}
            \mathcal{C}\left(\overline{F}\right) 
            = \mathcal{C}\left( \frac{1}{n} \overline{F}\right)    
            = \mathcal{C}(F^{-1})                                  
            = \mathcal{C}(F^\top)                                 
            = \mathcal{C}(F).                                      
        \end{align*}

        \item Follows from the observation that
        \begin{align*}
            y \in \mathcal{C}\left(\overline{F}\right) 
            &\iff y \in \overline{\mathcal{C}(F)}           \\
            &\iff y = \overline{x},\ x \in \mathcal{C}(F)   \\
            &\iff y = \overline{x},\ x = Fz,\ z \geqslant 0       \\
            &\iff y = \overline{F}z,\ z\geqslant 0                \\
            &\iff y \in \mathcal{C}_r(\overline{F}).
        \end{align*}
        
        \item The assertion that $\mathcal{P}(\overline{F}) = \mathcal{P}_r(\overline{F})$ is an immediate consequence of part \ref{barFideal} and \thref{coneandpoly}. \qedhere
    \end{enumerate}
\end{proof}

There is an easier test for feasibility, as follows.

%--------------
\begin{theorem}
    If $x \in \mathbb{C}^n$, then $\Lambda(x)$ is realizable by an $n$-by-$n$ circulant matrix if and only if $F x \geqslant 0$.    
\end{theorem}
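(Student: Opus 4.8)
\emph{Overview of the plan.} I would read the statement as a reformulation of the identity $\mathcal{C}(F) = \{\, x \in \mathbb{C}^n : Fx \geqslant 0 \,\}$, combined with the fact (already packaged in \eqref{M1jentry}) that every matrix of the form $F D_x F^{-1}$ is circulant with spectrum $\Lambda(x)$. So the whole theorem should reduce to computing the polyhedral cone $\mathcal{C}(F)$ explicitly and then translating "$x \in \mathcal{C}(F)$'' into the language of circulant realizability.

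\emph{Step 1: show $x \in \mathcal{C}(F) \iff Fx \geqslant 0$.} This is the only place real work happens, and it is short given what precedes. By \thref{propF}, $\overline{F}$ is ideal, so $\mathcal{C}\!\left(\overline{F}\right) = \mathcal{C}_r\!\left(\overline{F}\right)$, and moreover $\mathcal{C}\!\left(\overline{F}\right) = \mathcal{C}(F)$. Since $\overline{F}$ is symmetric, its rows are its columns, so $\mathcal{C}_r\!\left(\overline{F}\right) = \{\, \overline{F} y : y \geqslant 0 \,\}$; as $\overline{F}$ is invertible with $\overline{F}^{\,-1} = \tfrac{1}{n} F$ by \eqref{Finv}, this set equals $\{\, x : \tfrac{1}{n} F x \geqslant 0 \,\} = \{\, x : Fx \geqslant 0 \,\}$. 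Hence $x \in \mathcal{C}(F) \iff Fx \geqslant 0$.

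\emph{Step 2: the two implications.} For sufficiency, if $Fx \geqslant 0$ then $x \in \mathcal{C}(F)$, so $M_x \coloneqq F D_x F^{-1} \geqslant 0$; putting $y \coloneqq F^{-1}x$ (so $x = Fy$), \eqref{M1jentry} gives $M_x = \circulant(y)$, a nonnegative circulant with $\spec(M_x) = \spec(D_x) = \Lambda(x)$, so $\Lambda(x)$ is realizable by a circulant. For necessity, any nonnegative circulant is of the form $F D F^{-1}$ with $D$ diagonal; the one realizing $\Lambda(x)$, taken with eigenvalue $x_j$ at DFT-index $j$, is then exactly $F D_x F^{-1} = M_x \geqslant 0$, whence $x \in \mathcal{C}(F)$ and $Fx \geqslant 0$ by Step 1.

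\emph{Where the difficulty lies.} The substantive content is all in Step 1, and it is essentially immediate once \thref{propF} and \eqref{Finv} are available. The one point that I would be careful to state precisely — and the only possible source of confusion — is the meaning of "realizable by a circulant'' in the converse direction: the eigenvalues of a circulant carry a canonical DFT indexing, and one must say explicitly that it is the matrix $M_x = F D_x F^{-1}$ itself (not some reindexing of its spectrum) that is being asserted nonnegative. With that understood, both directions follow without further computation.
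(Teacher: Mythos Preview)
Your proposal is correct and follows essentially the same route as the paper: both directions are reduced, via \thref{propF} (that $\mathcal{C}(F)=\mathcal{C}(\overline{F})=\mathcal{C}_r(\overline{F})$) together with \eqref{Finv}, to the identity $\mathcal{C}(F)=\{x:Fx\geqslant 0\}$. The only organizational difference is that you isolate this identity as a separate Step~1 before handling the two implications, whereas the paper weaves the same computation into each direction; you also make explicit the DFT-indexing convention in the necessity direction, which the paper leaves implicit.
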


\begin{proof}
    If $\Lambda(x)$ is realizable by an $n$-by-$n$ circulant matrix $C$, then $x \in \mathcal{C}(F)$. By parts \ref{FequalsconjF} and \ref{barFideal} of \thref{propF}, there is a nonnegative vector $y$ such that $x^\top = y^\top \overline{F}$. Note that $x = \overline{F}y$ since $\overline{F}$ is symmetric and because $F\overline{F} = n I_n$, we have $Fx = ny \geqslant 0$.  

    Conversely, if $y \coloneqq Fx \geqslant 0$, then 
    $$x = \frac{1}{n}\overline{F}y$$ 
    and since $\overline{F}$ is symmetric, it follows that  
    $$ x^\top = \frac{1}{n}y^\top \overline{F}.$$ 
    By parts \ref{FequalsconjF} and \ref{barFideal} of \thref{propF}, $x \in \mathcal{C}({G})$, i.e., $\Lambda(x)$ is realizable by an $n$-by-$n$ circulant matrix. 
\end{proof}

% \begin{proof}
%     Let $P$ be the permutation matrix defined as in equation \eqref{special_perm_matrx}. 
    
%     If $\Lambda(x)$ is the spectrum of an $n$-by-$n$ circulant matrix $C$, then $C = G D_x F^{-1}$, i.e., $x \in \mathcal{C}(F)$. By \thref{dftconetope}, there is a nonnegative vector $y$ such that $x^\top = y^\top G$. Since
%     \begin{align*}
%         x^\top = y^\top G
%         &\iff x = G^\top y                  \\
%         &\iff x = Gy                        \\
%         &\iff y = F^{-1} x                  \\
%         &\iff y = \frac{1}{n} \overline{F}x \\
%         &\iff ny = PG x                     \\
%         &\iff nP^\top y = Gx,
%     \end{align*}  
%     it follows that $Gx \geqslant 0$ whenever $y \geqslant 0$.

%     Conversely, suppose that $y = Gx \geqslant 0$. If $z = Py$, then $z \geqslant 0$ and $$z = Py = P(Gx) = (PG)x = \overline{F} x = n F^{-1}x.$$ Thus, $x = \frac{1}{n}G z$, and since $G = G^\top$, we have $x^\top = \frac{1}{n} z^\top G$, i.e., $x \in \mathcal{C}_r(G)$. By \thref{dftconetope}, $x \in \mathcal{C}(F)$. Thus, $C = G D_x F^{-1}$ is a nonnegative circulant matrix with spectrum $\Lambda(x)$. 
% \end{proof}

%----------------
\begin{corollary}
    If $x \in \mathbb{C}^n$, then $\Lambda(x)$ is realizable by an $n$-by-$n$ circulant matrix if and only if 
    \begin{equation}
        \label{circulant_inequalities}
        \sum_{j=1}^n \omega^{(i-1)(j-1)} x_j \geqslant 0,\ \forall i \in \bracket{n}.
    \end{equation}
\end{corollary}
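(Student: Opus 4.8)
The plan is simply to unwind the vector inequality $Fx \geqslant 0$ from the preceding theorem into a coordinatewise statement. By that theorem, $\Lambda(x)$ is realizable by an $n$-by-$n$ circulant matrix if and only if $Fx \geqslant 0$, so it suffices to show that this latter condition is precisely \eqref{circulant_inequalities}.

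First I would recall that, per the conventions of Section 2, for a vector $z \in \mathbb{C}^n$ the relation $z \geqslant 0$ means $\Re z \geqslant 0$ and $\Im z = 0$; equivalently, every entry $z_i$ is a nonnegative real number. Next I would compute the $i$-th entry of $Fx$ directly from the definition $f_{ij} = \omega^{(i-1)(j-1)}$:
\[ [Fx]_i = \sum_{j=1}^n f_{ij}\, x_j = \sum_{j=1}^n \omega^{(i-1)(j-1)} x_j. \]
Combining these two observations, $Fx \geqslant 0$ holds if and only if $\sum_{j=1}^n \omega^{(i-1)(j-1)} x_j \geqslant 0$ for every $i \in \bracket{n}$, which is exactly \eqref{circulant_inequalities}. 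Invoking the preceding theorem once more then yields the stated equivalence.

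There is essentially no obstacle here; the only point requiring a moment's care is the meaning of ``$\geqslant 0$'' for a complex vector, which must be read entrywise as ``nonnegative real part and zero imaginary part,'' so each of the $n$ sums in \eqref{circulant_inequalities} is asserted to be a nonnegative \emph{real} number, not merely to have nonnegative real part.
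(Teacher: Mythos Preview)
Your proposal is correct and matches the paper's approach: the paper states this as an immediate corollary of the preceding theorem without giving a separate proof, and your argument---computing $[Fx]_i$ entrywise from $f_{ij}=\omega^{(i-1)(j-1)}$ and invoking the convention that $z\geqslant 0$ means each entry is a nonnegative real number---is exactly the intended unwinding.
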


If $x \in \mathbb{C}^n$ satisfies the inequalities in \eqref{circulant_inequalities}, then the inequality corresponding to $i=1$ in yields 
\[ \sum_{k=1}^n x_k \geqslant 0 \]
corresponding to the trace-condition.

%-------------
\begin{remark}
    \thlabel{rem_symm}
    For $n \geqslant 2$, let 
    $$\mathbb{C}_{\rm sym}^n \coloneqq \{ x \in \mathbb{C}^n \mid \Im x_1 = 0,\ \overline{x_{k}} = x_{n - k + 2},\ 2 \leqslant k \leqslant n \}.$$ 
    In the light of \eqref{vomega}, notice that
    \[ g_{n-k+2} = v_\omega^{n-k+1} = \overline{v_\omega^{k-1}} = \overline{f_k},\ 2 \leqslant k \leqslant n. \]
    As such, the vector $Gx$ is real if and only if $x \in \mathbb{C}_{\rm sym}^n$.
\end{remark}

%--------------
\begin{example}
    In view of the inequalities given by \eqref{circulant_inequalities} and \thref{rem_symm}, if $x \in \mathbb{C}^2$ and $\Lambda(x) = \{ x_1, x_2 \}$, then $\Lambda$ is realizable by a circulant matrix if and only if $x_1, x_2 \in \mathbb{R}$ and 
    \begin{align*}
        \left\{ 
        \begin{array}{l}
            x_1 + x_2 \geqslant 0  \\
            x_1 - x_2 \geqslant 0  
        \end{array} \right..
    \end{align*}

    If $x \in \mathbb{C}^4$ and $\Lambda(x) = \{ x_1, x_2, x_3, x_4 \}$, then $\Lambda$ is realizable by a circulant matrix if and only if
    $x_1, x_3 \in \mathbb{R}$, $x_4 = \overline{x_2}$, and
    \begin{align*}
        \left\{ 
        \begin{array}{*{9}{l}}
            x_1 & + & x_2  & + & x_3 & + & \overline{x_2} & \geqslant & 0  \\
            x_1 & - & x_2i & - & x_3 & + & \overline{x_2}i & \geqslant & 0  \\
            x_1 & - & x_2 & + & x_3 & - & \overline{x_2} & \geqslant & 0  \\
            x_1 & + & x_2i & - & x_3 & - & \overline{x_2}i & \geqslant & 0  
        \end{array} \right..
    \end{align*}
\end{example}

%--------------
\begin{theorem}
    If $m,n \in \mathbb{N}$, then $F_m \otimes F_n$ is ideal, extremal, and $\dim \mathcal{C}(F_m \otimes F_n) = mn$. 
\end{theorem}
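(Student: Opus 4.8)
The plan is to reduce the three assertions to the single-factor case \thref{dftconetope} together with the Kronecker machinery already in place. First I would record, via \thref{dftconetope}, that each of $F_m$ and $F_n$ is an ideal Perron similarity. Then \thref{kronPS} gives that $F_m\otimes F_n$ is a Perron similarity, and \thref{kronideal} gives that it is ideal. This disposes of the idealness assertion with no further work.

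For extremality I would imitate the corresponding line in the proof of \thref{dftconetope}. Every entry of $F_m\otimes F_n$ has the form $\omega_m^{a}\,\omega_n^{b}$, a product of two roots of unity; hence every entry is itself a root of unity and so lies on the unit circle. Since $\Theta_{mn}$ is contained in the closed unit disc, each such entry $\lambda$ is extremal in $\Theta_{mn}$: for $\alpha>1$ one has $|\alpha\lambda|>1$, so $\alpha\lambda\notin\Theta_{mn}$. Now the first column of $F_m\otimes F_n$ is $e$, so every row begins with a $1$; the row indexed by $(1,1)$ is $e^\top$, and every other row $r$ has some entry $r_k\neq 1$ with $k>1$. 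Because $F_m\otimes F_n$ is ideal, $r\in\mathcal{P}_r(F_m\otimes F_n)=\mathcal{P}(F_m\otimes F_n)$ by \thref{coneandpoly}, so $r\in\mathbb{SL}_1^{mn}$; since $r_k$ is extremal in $\Theta_{mn}$ and $k>1$, the earlier observation that such a stochastic list is extremal gives $r\in\mathbb{E}_{mn}$. Thus $\mathcal{P}(F_m\otimes F_n)$ contains an extremal point other than $e$, i.e., $F_m\otimes F_n$ is extremal.

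For the dimension I would use that a Kronecker product of invertible matrices is invertible, so $F_m\otimes F_n\in\mathsf{GL}_{mn}(\mathbb{C})$. Idealness gives $\mathcal{C}(F_m\otimes F_n)=\mathcal{C}_r(F_m\otimes F_n)=\coni(r_1,\dots,r_{mn})$, where $r_1,\dots,r_{mn}$ are the rows of $F_m\otimes F_n$; hence $\dim\mathcal{C}(F_m\otimes F_n)=\dim\linearspan(r_1,\dots,r_{mn})$. Invertibility makes these rows complex-linearly independent, hence a fortiori real-linearly independent, so the span has dimension $mn$. This is the same ``because it is invertible'' justification used in \thref{dftconetope}.

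I do not anticipate a real obstacle; the substance is carried by \thref{dftconetope}, \thref{kronPS}, \thref{kronideal}, \thref{coneandpoly}, and the earlier observation on extremal entries. The only points needing care are the bookkeeping of which row of the Kronecker product equals $e^\top$ and the real-versus-complex distinction in the dimension count, and both are routine. (When $\min\{m,n\}=1$ one has $F_m\otimes F_n=F_{\max\{m,n\}}$ and the statement is literally \thref{dftconetope}, so the genuinely new case is $m,n\geqslant 2$.)
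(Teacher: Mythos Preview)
Your proposal is correct and follows the same approach as the paper, which simply writes ``Immediate from \thref{kronideal} and \thref{dftconetope}.'' You have filled in the details that the paper leaves implicit---in particular, the observation that every entry of $F_m\otimes F_n$ is a root of unity and hence extremal in $\Theta_{mn}$, and the invertibility argument for the dimension---but these are exactly the reasons the paper regards the result as immediate.
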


\begin{proof}
    Immediate from \thref{kronideal} and \thref{dftconetope}.
\end{proof}

%--------------
\begin{theorem}
    [half-space description]
    If $m,n \in \mathbb{N}$, then
    $$\mathcal{C}(F_m \otimes F_n) = \bigcap_{k \in \bracket{n}} \mathsf{H}_k,$$
    where 
    $$\mathsf{H}_k \coloneqq \mathsf{H}({f_k}) \cap \mathsf{H}(\ii {f_k}) \cap \mathsf{H}(-\ii {f_k})$$
    and $f_k \coloneqq (F_m \otimes F_n) e_k$, $k \in \bracket{mn}$.
\end{theorem}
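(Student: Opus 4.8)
The plan is to reprise the proof of the half-space description of $\mathcal{C}(F)$ established above, replacing $F$ by $G \coloneqq F_m \otimes F_n$ and the scalar $n$ by $mn$; two structural facts make this go through verbatim. First, $G$ is symmetric, because $F_m$ and $F_n$ are symmetric and $(A \otimes B)^\top = A^\top \otimes B^\top$. Second, by \eqref{FmFninv} we have $G^{-1} = \frac{1}{mn}\overline{G}$, so that $\overline{G}\,G = G\,\overline{G} = mn\,I_{mn}$, and symmetry yields $G^\ast = \overline{G}$, hence also $G^\ast G = mn\,I_{mn}$. Finally, $F_m$ and $F_n$ are ideal by \thref{dftconetope}, so $G$ is ideal by \thref{kronideal}, whence $\mathcal{C}(G) = \mathcal{C}_r(G) = \coni(f_1,\dots,f_{mn})$ (the $f_k$ being simultaneously the columns and, by symmetry, the rows of $G$).

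For $\mathcal{C}(G) \subseteq \bigcap_{k \in \bracket{mn}} \mathsf{H}_k$, I would take $x \in \mathcal{C}(G) = \mathcal{C}_r(G)$, fix $y \geqslant 0$ with $x^\top = y^\top G$, and invoke symmetry to write $x = Gy$. Then the computation of \eqref{yk} carries over word for word, giving $\langle x, f_k \rangle = (G e_k)^\ast (G y) = e_k^\top(\overline{G}\,G) y = mn\,y_k$ for each $k$. Since $y_k \geqslant 0$ is real, $\Re \langle x, f_k \rangle = mn\,y_k \geqslant 0$ and $\Re \langle x, \pm\ii f_k \rangle = \Re(\mp\ii\,mn\,y_k) = 0$, so $x \in \mathsf{H}_k$ for every $k$. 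For the reverse inclusion I would argue by contradiction as in the $F$ case: given $x \in \bigcap_k \mathsf{H}_k$ with $x \notin \mathcal{C}(G)$, invertibility of $G$ provides a unique $y \in \mathbb{C}^{mn}$ with $x = Gy$, equivalently $x^\top = y^\top G$; since $\mathcal{C}(G) = \mathcal{C}_r(G)$, the assumption $x \notin \mathcal{C}(G)$ forces $\Re y \not\geqslant 0$ or $\Im y \neq 0$, hence an index $k$ with $\Re y_k < 0$ or $\Im y_k \neq 0$. Because the identity $\langle x, f_k \rangle = mn\,y_k$ needed no nonnegativity of $y$, the three sub-cases ($\Re y_k < 0$; $\Im y_k < 0$; $\Im y_k > 0$) contradict $x \in \mathsf{H}(f_k)$, $x \in \mathsf{H}(\ii f_k)$, and $x \in \mathsf{H}(-\ii f_k)$, respectively, just as before; this proves $\bigcap_k \mathsf{H}_k \subseteq \mathcal{C}(G)$.

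I do not expect any genuine obstacle: once $G$ is known to be ideal, symmetric, and to satisfy $\overline{G}\,G = mn\,I_{mn}$, the proof is a transcription of the one-factor case. The only points that need care are the arithmetic with $mn$ in place of $n$ and the standard Kronecker identities $\overline{A \otimes B} = \overline{A} \otimes \overline{B}$, $(A \otimes B)(C \otimes D) = (AC) \otimes (BD)$, and $(A \otimes B)^\top = A^\top \otimes B^\top$. The same argument, with $mn$ replaced by $n_1 \cdots n_r$, would handle an arbitrary multifold Kronecker product $F_{n_1} \otimes \cdots \otimes F_{n_r}$.
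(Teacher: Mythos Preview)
Your proposal is correct and is exactly the argument the paper has in mind: the paper states this theorem without proof, immediately after the single-factor half-space description, because the proof is a verbatim transcription of that case with $G = F_m \otimes F_n$ in place of $F$ and $mn$ in place of $n$, using precisely the three structural facts you isolate (symmetry, $\overline{G}G = mn\,I_{mn}$ from \eqref{FmFninv}, and ideality from \thref{kronideal}). Note also that the intersection in the statement should run over $k \in \bracket{mn}$, not $k \in \bracket{n}$; your proof implicitly uses the correct index set.
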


%------------------
\begin{proposition}
    If $m, n \in \mathbb{N}$, then:
        \begin{enumerate}
            [label=(\roman*)]
            \item $\mathcal{C}({F_m}\otimes{F_n}) = \mathcal{C}\left(\overline{F_m}\otimes\overline{F_n}\right)$;
            \item $\overline{F_m} \otimes \overline{F_n}$ is ideal; and 
            \item $\mathcal{P}(\overline{F_m} \otimes \overline{F_n}) = \mathcal{P}_r(\overline{F_m} \otimes \overline{F_n})$.
        \end{enumerate}
\end{proposition}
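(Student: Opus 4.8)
The plan is to mirror the proof of \thref{propF} line for line, with $F$ replaced by $F_m \otimes F_n$; every ingredient used there is already available in Kronecker form. The three facts needed are: (a) $F_m \otimes F_n$ is symmetric, immediate from $(A \otimes B)^\top = A^\top \otimes B^\top$ and the symmetry of each $F_k$; (b) $(F_m \otimes F_n)^{-1} = \tfrac{1}{mn}(\overline{F_m} \otimes \overline{F_n})$, which is \eqref{FmFninv}; and (c) $\overline{A \otimes B} = \overline{A} \otimes \overline{B}$. Granting these, part (i) follows from the chain
\[ \mathcal{C}(\overline{F_m} \otimes \overline{F_n}) = \mathcal{C}\left(\tfrac{1}{mn}\left(\overline{F_m}\otimes\overline{F_n}\right)\right) = \mathcal{C}\left((F_m \otimes F_n)^{-1}\right) = \mathcal{C}\left((F_m \otimes F_n)^\top\right) = \mathcal{C}(F_m \otimes F_n), \]
invoking parts \ref{alphaS} and \ref{Sinv} of \thref{conetransforms} and then symmetry.

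For part (ii), recall that $F_m \otimes F_n$ is ideal (\thref{kronideal} together with \thref{dftconetope}), so $x \in \mathcal{C}(F_m \otimes F_n)$ if and only if $x = (F_m \otimes F_n)z$ for some $z \geqslant 0$ (using symmetry to convert the row description $x^\top = z^\top(F_m \otimes F_n)$ into $x = (F_m \otimes F_n)z$, and recalling that $z \geqslant 0$ forces $z$ real). Then, exactly as in \thref{propF}\,(ii) and using part \ref{Sconj} of \thref{conetransforms},
\begin{align*}
    y \in \mathcal{C}(\overline{F_m} \otimes \overline{F_n})
    &\iff y \in \overline{\mathcal{C}(F_m \otimes F_n)}
    \iff y = \overline{x},\ x = (F_m \otimes F_n)z,\ z \geqslant 0 \\
    &\iff y = (\overline{F_m} \otimes \overline{F_n})z,\ z \geqslant 0
    \iff y \in \mathcal{C}_r(\overline{F_m} \otimes \overline{F_n}),
\end{align*}
where the third equivalence uses (c) and $\overline{z} = z$ for $z \geqslant 0$, and the last uses symmetry of $\overline{F_m}\otimes\overline{F_n}$. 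Combining with part (i) gives $\mathcal{C}_r(\overline{F_m}\otimes\overline{F_n}) = \mathcal{C}(\overline{F_m}\otimes\overline{F_n})$, i.e.\ $\overline{F_m}\otimes\overline{F_n}$ is ideal.

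Part (iii) is then immediate from part (ii) and \thref{coneandpoly}, once one observes that $\overline{F_m}\otimes\overline{F_n}$ is a Perron similarity — for instance because $\overline{F_m}$ and $\overline{F_n}$ are Perron similarities (each diagonalizes its real, irreducible, nonnegative circulant shift matrix) so that \thref{kronPS} applies. I do not expect a genuine obstacle: the argument is entirely bookkeeping, and the only points requiring care are tracking which item of \thref{conetransforms} is invoked at each step and confirming the Perron-similarity status of the conjugated matrix so that \thref{coneandpoly} may be applied in part (iii).
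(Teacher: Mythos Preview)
Your proposal is correct and follows exactly the route the paper indicates: the paper's proof reads in its entirety ``Analogous to the proof of \thref{propF} using \eqref{FmFninv} and properties of the Kronecker product,'' and you have simply made that analogy explicit, identifying the three Kronecker identities needed and even supplying the Perron-similarity verification for $\overline{F_m}\otimes\overline{F_n}$ that the paper leaves implicit.
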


\begin{proof}
    Analogous to the proof of \thref{propF} using \eqref{FmFninv} and properties of the Kronecker product.  
\end{proof}

%----------------
\begin{corollary}
    If $x\in \mathbb{C}^n$, then $\Lambda(x)$ is realizable by an $(m,n)$ block-circulant matrix with circulant blocks if and only if $(F_m \otimes F_n) x \geqslant 0$.
\end{corollary}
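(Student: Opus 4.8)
The plan is to transcribe, \emph{mutatis mutandis}, the proof of the single-circulant criterion established above (``$\Lambda(x)$ is realizable by an $n$-by-$n$ circulant matrix if and only if $Fx \geqslant 0$''), with the scalar $n$ replaced by $mn$, the matrix $F$ replaced by $G \coloneqq F_m \otimes F_n$, and the identity $F\overline{F} = nI$ replaced by its block analogue; throughout, $x$ is read as a vector in $\mathbb{C}^{mn}$.

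First I would record the properties of $G$ that drive the argument. (i) $G$ is symmetric, since $F_m$ and $F_n$ are symmetric and the Kronecker product of symmetric matrices is symmetric; hence $\overline{G} = \overline{F_m} \otimes \overline{F_n}$ is symmetric as well. (ii) $G\overline{G} = (F_m\overline{F_m}) \otimes (F_n\overline{F_n}) = (mI_m) \otimes (nI_n) = mn\, I_{mn}$, which is \eqref{FmFninv} rearranged. (iii) $G$ is ideal and, by the proposition immediately preceding this corollary, $\mathcal{C}(G) = \mathcal{C}(\overline{G})$ with $\overline{G}$ ideal, so that $\mathcal{C}(G) = \mathcal{C}(\overline{G}) = \mathcal{C}_r(\overline{G})$. (iv) By the diagonalization characterization of $\mathcal{BCCB}_{mn}$ recalled above, $\Lambda(x)$ is realizable by a matrix in $\mathcal{BCCB}_{mn}$ if and only if $M_x = G D_x G^{-1} \geqslant 0$, i.e., if and only if $x \in \mathcal{C}(G)$ (the passage from $G D_x \overline{G}$ to $G D_x G^{-1}$ being a positive rescaling by $\tfrac{1}{mn}$, which is immaterial for nonnegativity).

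With these facts in place both implications follow exactly as in the circulant case. If $x \in \mathcal{C}(G) = \mathcal{C}_r(\overline{G})$, choose $y \geqslant 0$ with $x^\top = y^\top \overline{G}$; symmetry of $\overline{G}$ gives $x = \overline{G}y$, whence $Gx = G\overline{G}y = mn\,y \geqslant 0$. Conversely, if $y \coloneqq Gx \geqslant 0$, then $x = \tfrac{1}{mn}\overline{G}y$, so $x^\top = \tfrac{1}{mn}y^\top\overline{G}$ by symmetry, placing $x$ in $\mathcal{C}_r(\overline{G}) = \mathcal{C}(G)$, i.e., $\Lambda(x)$ is realizable by a matrix in $\mathcal{BCCB}_{mn}$. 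I do not anticipate a genuine obstacle: the argument is a verbatim copy of the single-circulant proof, and the only things to watch are the ambient dimension ($mn$ in place of $n$) and the normalization constant ($mn$ versus $\sqrt{mn}$) distinguishing the unitary $F_m\otimes F_n$ of the earlier section from the unnormalized $G$ used here.
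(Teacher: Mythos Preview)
Your proposal is correct and follows exactly the approach the paper intends: the corollary is stated without proof in the paper, and the implicit argument is precisely the \emph{mutatis mutandis} transcription of the single-circulant theorem that you have written out, using the preceding proposition ($\mathcal{C}(F_m\otimes F_n)=\mathcal{C}(\overline{F_m}\otimes\overline{F_n})$ with $\overline{F_m}\otimes\overline{F_n}$ ideal) and the identity \eqref{FmFninv}. Your observation that the ambient dimension should be $mn$ rather than $n$ is also correct; the $n$ in the statement is a typo.
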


%--------------
\begin{theorem}
    If $k \in \mathbb{N}_0$ and $n \in \mathbb{N}$, then $F_n \otimes H_{2^k}$ is ideal, extremal, and $\dim \mathcal{C}(F_n \otimes H_{2^k}) = 2^k n$
\end{theorem}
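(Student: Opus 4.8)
The plan is to bootstrap from three facts already in hand: $F_n$ is ideal, extremal, and $\dim\mathcal{C}(F_n)=n$ (\thref{dftconetope}); the analogous statement for the Walsh matrix, that $H_{2^k}$ is ideal, extremal, and $\dim\mathcal{C}(H_{2^k})=2^k$; and that, by Theorems~\ref{kronPS} and \ref{kronideal}, a Kronecker product of Perron (respectively, ideal) similarities is again a Perron (respectively, ideal) similarity. From these it is immediate that $F_n\otimes H_{2^k}$ is a Perron similarity and is ideal. Since the first columns of $F_n$ and of $H_{2^k}$ are all-ones vectors, the first column of $F_n\otimes H_{2^k}$ is $e$, so $x_1=1$ for every $x\in\mathcal{P}(F_n\otimes H_{2^k})$ (each $M_x$ being stochastic), whence $\mathcal{P}(F_n\otimes H_{2^k})\subseteq\mathbb{SL}_1^{2^kn}$. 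For the dimension I would reuse the reasoning from the proof of \thref{dftconetope}: $F_n\otimes H_{2^k}$ is a Kronecker product of invertible matrices, hence invertible of order $2^kn$, so being ideal it equals the conical hull of its $2^kn$ linearly independent rows, giving $\dim\mathcal{C}(F_n\otimes H_{2^k})=2^kn$.

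The substantive point is extremality. If $n=1$ then $F_1\otimes H_{2^k}=H_{2^k}$, which is extremal by the Walsh-matrix result, so assume $n\geqslant2$ and put $N\coloneqq 2^kn$. The plan is to exhibit an extremal point of $\mathcal{P}(F_n\otimes H_{2^k})$ other than $e$. A convenient choice is $x\coloneqq v_\omega\otimes e$, where $v_\omega=\begin{bmatrix}1&\omega&\cdots&\omega^{n-1}\end{bmatrix}^\top$ (with $\omega=\omega_n$) is the second row of $F_n$ and $e$ here is the all-ones vector of length $2^k$. Because the first row of $H_{2^k}$ is $e^\top$, this vector $x$ is precisely the row $r_2(F_n)\otimes r_1(H_{2^k})$ of $F_n\otimes H_{2^k}$; since $F_n\otimes H_{2^k}$ is ideal, \thref{coneandpoly} gives $x\in\mathcal{P}_r(F_n\otimes H_{2^k})=\mathcal{P}(F_n\otimes H_{2^k})$. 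Plainly $x\neq e$ (as $\omega\neq1$), and the $(2^k+1)$-st coordinate of $x$ equals $\omega$.

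To conclude, I would appeal to the observation at the end of Section~3: a member of $\mathbb{SL}_1^N$ with some coordinate $x_j$, $j>1$, extremal in $\Theta_N$ necessarily lies in $\mathbb{E}_N$. Here $\omega$ is a root of unity, so $|\omega|=1$, and since $\Theta_N$ is contained in the closed unit disc by \thref{karpito}, $\alpha\omega\notin\Theta_N$ for every $\alpha>1$; that is, $\omega$ is extremal in $\Theta_N$. As $x\in\mathcal{P}(F_n\otimes H_{2^k})\subseteq\mathbb{SL}_1^N$ has its $(2^k+1)$-st coordinate extremal in $\Theta_N$, we get $x\in\mathbb{E}_N$, so $\mathcal{P}(F_n\otimes H_{2^k})$ contains an extremal point distinct from $e$ and $F_n\otimes H_{2^k}$ is extremal. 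I expect the only genuine obstacle to be making this last step airtight: extremality of a boundary point of the smaller region $\Theta_n$ does not automatically persist in the larger region $\Theta_N$, which is exactly why the witness must be chosen with its distinguished coordinate a root of unity on the unit circle, since such points stay extremal in $\Theta_N$ for every $N\geqslant n$. (Alternatively one can avoid naming a specific row: the map $x\mapsto x\otimes e$ embeds $\mathcal{P}(F_n)$ into $\mathcal{P}(F_n\otimes H_{2^k})$ and sends the extremal point $v_\omega$ of $\mathcal{P}(F_n)$ to $x$, though the root-of-unity remark is still needed to see the image is extremal in $\mathbb{SL}_1^N$.)
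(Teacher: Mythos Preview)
Your argument is correct and follows the same approach the paper intends: the paper states this theorem without proof, relying on the pattern established for $F_m\otimes F_n$ (whose proof is simply ``Immediate from \thref{kronideal} and \thref{dftconetope}''), together with the Walsh-matrix theorem. Your write-up supplies the details that the paper leaves implicit, most notably for extremality, where you correctly observe that a coordinate lying on the unit circle (a root of unity) is extremal in every $\Theta_N$, so the Observation at the end of Section~3 applies; this is exactly the mechanism behind the one-line extremality argument in the proof of \thref{dftconetope} (``every entry of $F$ is extremal in $\Theta_n$''), and it carries over here because every entry of $F_n\otimes H_{2^k}$ is $\pm$ a root of unity.
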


%----------------
\begin{corollary}
    If $x \in \mathbb{C}^n$, then $\Lambda(x)$ is realizable by a Klein block matrix with circulant blocks if and only if $(F_n \otimes H_{2^k})x \geqslant 0$.      
\end{corollary}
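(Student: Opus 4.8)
The plan is to transcribe, almost verbatim, the argument used for the circulant theorem (that $\Lambda(x)$ is realizable by an $n$-by-$n$ circulant if and only if $Fx \geqslant 0$) and its block-circulant-with-circulant-blocks analogue, now with $F$ replaced by $S \coloneqq F_n \otimes H_{2^k}$. Three preliminary facts are needed, each the evident Kronecker-analogue of a statement already established for $F$ and for $F_m \otimes F_n$. First, $S$ is symmetric (both factors are) and invertible with
\[ S^{-1} = F_n^{-1} \otimes H_{2^k}^{-1} = \frac{1}{2^k n}\bigl(\overline{F_n} \otimes H_{2^k}\bigr), \]
using \eqref{Finv} and $H_{2^k}^{-1} = \frac{1}{2^k} H_{2^k}$ (the Hadamard property $H_{2^k}H_{2^k}^\top = 2^k I$ together with symmetry of $H_{2^k}$); moreover $\overline{S} = \overline{F_n} \otimes H_{2^k}$ since $H_{2^k}$ is real. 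Exactly as in the proof of \thref{propF}\ref{FequalsconjF}, this yields
\[ \mathcal{C}\bigl(\overline{F_n} \otimes H_{2^k}\bigr) = \mathcal{C}\Bigl(\tfrac{1}{2^k n}(\overline{F_n}\otimes H_{2^k})\Bigr) = \mathcal{C}\bigl(S^{-1}\bigr) = \mathcal{C}\bigl(S^\top\bigr) = \mathcal{C}(S) \]
by parts \ref{alphaS} and \ref{Sinv} of \thref{conetransforms} and the symmetry of $S$. Second, $\overline{F_n} \otimes H_{2^k}$ is ideal: copying the chain of equivalences in the proof of \thref{propF}\ref{barFideal}, and using that $S$ itself is ideal (the theorem immediately preceding this corollary), one obtains $y \in \mathcal{C}(\overline{S}) \iff y = \bar x,\ x \in \mathcal{C}(S) = \mathcal{C}_r(S) \iff y = \overline{S}\,z,\ z \geqslant 0 \iff y \in \mathcal{C}_r(\overline{S})$, so $\mathcal{C}(\overline{S}) = \mathcal{C}_r(\overline{S})$. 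Third, $\mathcal{A}(S)$ is precisely the family of Klein block matrices with circulant blocks, just as $\mathcal{A}(F_m\otimes F_n)$ is $\mathcal{BCCB}_{mn}$; hence $M_x = S D_x S^{-1}$ runs over all such matrices as $x$ varies, and any realizing matrix of this form is $S$-diagonalizable.

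Granting these, the equivalence is a one-line computation in each direction, mirroring the circulant proof. If $\Lambda(x)$ is realized by a Klein block matrix with circulant blocks $C$, then $C = M_x$, so $x \in \mathcal{C}(S) = \mathcal{C}(\overline{S}) = \mathcal{C}_r(\overline{S})$; choose $y \geqslant 0$ with $x^\top = y^\top \overline{S}$, note $x = \overline{S}\,y$ since $\overline{S}$ is symmetric, and compute
\[ S x = S\overline{S}\,y = \bigl(F_n\overline{F_n} \otimes H_{2^k}^2\bigr) y = \bigl(n I_n \otimes 2^k I_{2^k}\bigr) y = 2^k n\, y \geqslant 0. \]
Conversely, if $y \coloneqq Sx \geqslant 0$, then $x = S^{-1} y = (2^k n)^{-1}\,\overline{S}\,y$, so $x^\top = (2^k n)^{-1}\,y^\top \overline{S}$ exhibits $x \in \mathcal{C}_r(\overline{S}) = \mathcal{C}(\overline{S}) = \mathcal{C}(S)$; thus $M_x \geqslant 0$, and by the third preliminary fact $M_x$ is a Klein block matrix with circulant blocks realizing $\Lambda(x)$.

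The only step that is not pure transcription is the third preliminary fact — pinning down that the matrices $S D_x S^{-1}$ are exactly the ``Klein block matrices with circulant blocks.'' This, however, is the same bookkeeping that underlies the $\mathcal{BCCB}_{mn}$ description via $F_m \otimes F_n$, combined with the identification of Klein matrices as the range of $\mathcal{A}(H_{2^k})$; it follows from the standard fact that $P \otimes Q$ simultaneously diagonalizes exactly the linear span of $\{A \otimes B\}$ with $A$ in the class diagonalized by $P$ and $B$ in the class diagonalized by $Q$. So I expect no genuine obstacle, and the proof is short.
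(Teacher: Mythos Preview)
The proposal is correct and takes essentially the same approach as the paper, which in fact states this corollary without proof, relying on the reader to transcribe the argument already given for circulants and for $\mathcal{BCCB}_{mn}$ (via \thref{propF} and the theorem preceding it). Your write-up fills in exactly those details, including the $\overline{S}$-analogue of \thref{propF} and the identification of $\mathcal{A}(F_n\otimes H_{2^k})$ with the class of Klein block matrices with circulant blocks, so nothing further is needed.
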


%-------------
\begin{remark}
    If $A \in \mathsf{M}_m(\mathbb{C})$ and $B \in \mathsf{M}_n(\mathbb{C})$, then there is a permutation matrix $P$ such that $A \otimes B = P(B \otimes A) P^\top$ \cite[Corollary 4.3.10]{hj1994}. As a consequence, $F_m \otimes F_n \sim F_n \otimes F_m$. Furthermore, if $m$ and $n$ are relatively prime, then there are permutation matrices $P$ and $Q$ such that $F_{mn} = P (F_m \otimes F_n) Q$ \cite{g1958,r1980}. Thus, $F_{mn} \sim F_m \otimes F_n$ whenever $\gcd(m,n) = 1$. It is also clear that $F_{mn} \not\sim F_m \otimes F_n$ whenever $\gcd(m,n) > 1$.
\end{remark}

Recall that if $A_1,\ldots, A_m$ are matrices with $A_k \in \mathsf{M}_{m_k \times n_k}(\mathbb{C})$, then
\[  
\bigotimes_{k=1}^m A_k 
\coloneqq
\begin{cases}
    A_1,                                                    & k = 1 \\
    \left(\bigotimes_{k=1}^{m-1} A_k \right) \otimes A_m,   & k > 1.
\end{cases}
\]

%--------------
\begin{theorem}
    If $S_1,\ldots,S_m$ are Perron similarities with $S_k \in \mathsf{GL}_{n_k}(\mathbb{C})$, then $\bigotimes_{k=1}^m S_k$ is a Perron similarity. 
\end{theorem}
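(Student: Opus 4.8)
The plan is to induct on $m$, using \thref{kronPS} as the base step. The statement for $m=1$ is vacuous (a single Perron similarity is a Perron similarity), and the case $m=2$ is precisely \thref{kronPS}. For the inductive step, suppose the result holds for some $m \geqslant 2$, and let $S_1,\ldots,S_{m+1}$ be Perron similarities with $S_k \in \mathsf{GL}_{n_k}(\mathbb{C})$. By the inductive hypothesis, $T \coloneqq \bigotimes_{k=1}^m S_k$ is a Perron similarity; by definition of the iterated Kronecker product, $\bigotimes_{k=1}^{m+1} S_k = T \otimes S_{m+1}$. Since $T$ and $S_{m+1}$ are both Perron similarities, \thref{kronPS} applies and yields that $T \otimes S_{m+1}$ is a Perron similarity, completing the induction.

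The only point requiring minor care is that \thref{kronPS} is stated for two factors, so one must check that the iterated product really does decompose as a Kronecker product of the ``first $m$'' with ``the last one,'' which is immediate from the recursive definition $\bigotimes_{k=1}^{m+1} A_k = \left(\bigotimes_{k=1}^m A_k\right) \otimes A_{m+1}$ recorded just before the statement. No genuine obstacle arises here; the substantive content was already extracted into \thref{kronPS}. (One could alternatively give a direct, non-inductive argument: by \thref{perronsimcharacterization}, for each $k$ there is a unique index $j_k$ with $S_k e_{j_k} = \alpha_k x^{(k)}$ and $e_{j_k}^\top S_k^{-1} = \beta_k (y^{(k)})^\top$ for positive vectors $x^{(k)}, y^{(k)}$ and $\alpha_k \beta_k > 0$; then $\bigotimes_k S_k$ sends $\bigotimes_k e_{j_k}$ to $\bigl(\prod_k \alpha_k\bigr) \bigotimes_k x^{(k)}$, a positive vector, with the analogous statement for the inverse via $\left(\bigotimes_k S_k\right)^{-1} = \bigotimes_k S_k^{-1}$, and $\prod_k \alpha_k \beta_k > 0$, so \thref{perronsimcharacterization} identifies $\bigotimes_k S_k$ as a Perron similarity. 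The inductive proof is shorter, so that is the one I would present.)
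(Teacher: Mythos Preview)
Your proposal is correct and matches the paper's own proof exactly: the paper simply states that the result ``follows by a straightforward proof by induction on $m$ in conjunction with \thref{kronPS}.'' Your write-up fills in the routine details of that induction (and even notes the direct alternative via \thref{perronsimcharacterization}), so there is nothing to add.
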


\begin{proof}
    Follows by a straightforward proof by induction on $m$ in conjunction with \thref{kronPS}. 
\end{proof}

%--------------
\begin{theorem}
    If $S_1,\ldots,S_m$ are ideal with $S_k \in \mathsf{GL}_{n_k}(\mathbb{C})$, then $\bigotimes_{k=1}^m S_k$ is ideal. 
\end{theorem}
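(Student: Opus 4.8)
The plan is to argue by induction on $m$, leaning on \thref{kronideal} for the two-factor case together with the immediately preceding theorem guaranteeing that Kronecker products of Perron similarities are again Perron similarities (so that the notion of being \emph{ideal} is even defined for the partial products).

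For the base case $m = 1$ there is nothing to prove. For the inductive step, suppose the claim holds for $m - 1$ factors, so that $T \coloneqq \bigotimes_{k=1}^{m-1} S_k$ is ideal; in particular $T$ is a Perron similarity, and $S_m$ is a Perron similarity by hypothesis. By the associativity convention for the iterated Kronecker product recorded just above, $\bigotimes_{k=1}^{m} S_k = T \otimes S_m$. Since both $T$ and $S_m$ are ideal, \thref{kronideal} gives that $T \otimes S_m$ is ideal, which completes the induction.

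The only point requiring a moment of care is that \emph{ideal} is a property attached to Perron similarities, so at each stage of the induction one must first invoke the preceding theorem to know that $T = \bigotimes_{k=1}^{m-1} S_k$ is a Perron similarity before asserting that it is ideal; this is automatic here. I expect no genuine obstacle: once \thref{kronideal} is available the statement is a formal consequence, exactly parallel to the proof (by induction via \thref{kronPS}) that iterated Kronecker products of Perron similarities are Perron similarities.
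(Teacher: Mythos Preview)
Your proposal is correct and matches the paper's own proof, which simply states that the result follows by a straightforward induction on $m$ in conjunction with \thref{kronideal}. Your additional remark about invoking the preceding theorem to ensure the partial products are Perron similarities (so that ``ideal'' is defined) is a sensible elaboration of what the paper leaves implicit.
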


\begin{proof}
    Follows by a straightforward proof by induction on $m$ in conjunction with \thref{kronideal}. 
\end{proof}

%--------------
\begin{corollary}
    If 
    \[ S \coloneqq \left( \bigotimes_{j=1}^N F_{n_j} \right) \otimes H_{2^k},\ k \in \mathbb{N}_0,\ n_j \in \mathbb{N},\ j \in\bracket{N}, \]
    then  $S$ is ideal and extremal.
\end{corollary}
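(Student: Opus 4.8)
The plan is to derive both assertions by composing results already established, following the same line of reasoning used for $F_n$ and for $H_{2^k}$ individually.

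For ideality, note that $S$ is the Kronecker product of the Perron similarities $F_{n_1},\ldots,F_{n_N},H_{2^k}$ (the factor $H_{2^0}=[1]$ being vacuous and droppable when $k=0$), hence $S$ is itself a Perron similarity by the theorem on Kronecker products of Perron similarities. Each $F_{n_j}$ is ideal by \thref{dftconetope} and each $H_{2^k}$ is ideal by the Walsh-matrix theorem, so $S$ is ideal by the theorem asserting that a Kronecker product of ideal matrices is ideal.

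For extremality, let $m \coloneqq 2^k\prod_{j=1}^N n_j$ denote the order of $S$ and let $r_i$ denote the $i$th row of $S$, viewed as a column vector. Since $S$ is ideal, \thref{coneandpoly} yields $\mathcal{P}(S)=\mathcal{P}_r(S)=\conv(r_1,\ldots,r_m)$. The first column of each Kronecker factor of $S$ is an all-ones vector, so the first column of $S$ is $e$; thus $Se_1=e$, and, just as in the proof of \thref{coneandpoly}, $x_1=1$ for every $x\in\mathcal{P}(S)$. Consequently $\mathcal{P}(S)\subseteq\mathbb{SL}_1^m$, and in particular each row $r_i$ belongs to $\mathbb{SL}_1^m$. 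Assuming $m\geqslant 2$ (when $m=1$ we have $S=[1]$ and the statement degenerates), every entry of $F_{n_j}$ is a power of $\omega_{n_j}$ and every entry of $H_{2^k}$ is $\pm1$, so every entry of $S$ has modulus $1$; in particular $(r_2)_2$ is a coordinate of the vector $r_2\in\mathbb{SL}_1^m$, hence lies in $\Theta_m\subseteq\{z\in\mathbb{C}\mid|z|\leqslant1\}$, and since $|(r_2)_2|=1$ we get $\alpha(r_2)_2\notin\Theta_m$ for every $\alpha>1$, i.e., $(r_2)_2$ is extremal in $\Theta_m$. By the observation that any element of $\mathbb{SL}_1^m$ possessing a coordinate past the first that is extremal in $\Theta_m$ lies in $\mathbb{E}_m$, we get $r_2\in\mathbb{E}_m$; and $r_2\neq r_1=e$ because the rows of the invertible matrix $S$ are distinct. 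Hence $\mathcal{P}(S)$ contains an extremal point other than $e$, so $S$ is extremal.

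I do not foresee a real obstacle: the corollary is a straightforward composition of the Kronecker-product theorems with the earlier observations on $\mathbb{E}_n$. The only points that demand a little care are the inclusion $\mathcal{P}(S)\subseteq\mathbb{SL}_1^m$, which hinges on the routine verification that the first column of $S$ is the all-ones vector, and the degenerate case $m=1$, where the statement should be understood with an implicit nontriviality assumption.
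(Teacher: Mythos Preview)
Your proposal is correct and follows the same approach the paper intends: ideality comes straight from the two preceding theorems (each $F_{n_j}$ and $H_{2^k}$ is ideal, and Kronecker products of ideal matrices are ideal), while extremality follows because every entry of $S$ has modulus one, hence any non-first row furnishes an element of $\mathbb{E}_m$ in $\mathcal{P}(S)=\mathcal{P}_r(S)$. The paper states the corollary without proof, relying on the preceding results; your write-up simply makes the extremality step explicit in the same spirit as the proof of \thref{dftconetope}.
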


%--------------
\begin{example}
    The matrices $F_{24}$, $H_2 \otimes F_{12}$, $H_4 \otimes F_6$, $H_8 \otimes F_3$, and $F_4 \otimes F_{6}$ are ideal and extremal Perron similarities of order $24$.
\end{example}

%---------------------------------------------------------------------
\section{Geometrical representation of the spectra of 4-by-4 matrices}

% If $n=2$, then the matrix $F_2$ yields all possible spectra and if $n=3$, then the matrices $F_2 \oplus F_1$ and $F_3$ yield all possible spectra. 

% For details of Perron similarities that yield all possible spectra when $n=4$ and $\lambda_1$, $\lambda_2$, $\lambda_3$, and $\lambda_4$ are real, see \cite{jp2016}.

The problem of finding a geometric representation of all vectors $\begin{bmatrix} \lambda & \alpha & \omega \end{bmatrix}^\top$ in $\mathbb{R}^3$ such that $\{1,\lambda,\alpha+\omega\ii,\alpha-\omega\ii \}$ is the spectrum of a 4-by-4 nonnegative matrix (we denote this region by $\mathbb{B}$) was posed by Egleston et al.~\cite[Problem 1]{eln2004}. 

In 2007, Torre-Mayo et al.~\cite{tmetal2007} characterized the coefficients of the characteristic polynomials of four-by-four nonnegative matrices and in 2014, Benvenuti \cite{b2014} used these results to produce the region given in Figure \ref{fig:ben}. It is worth noting here that this approach is not applicable to any other dimension.
\begin{figure}[H]
    \centering
    \includegraphics[width = 0.333\textwidth]{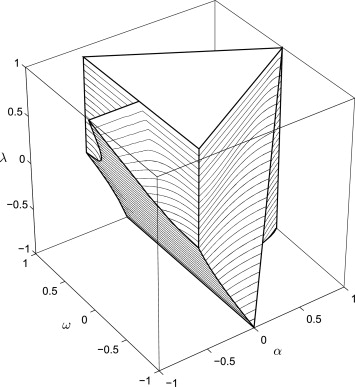}
    \caption{Geometrical representation of the spectra of four-by-four matrices by Benvenuti \cite[Figure 11]{b2014}.}
    \label{fig:ben}
\end{figure} 

%--------------------------------------------
\subsection{Region Generated by Spectratopes}

%------------
\begin{lemma}
    If 
    \[ S = 
    \begin{bmatrix}
        1 & e_1^\top \\
        0 & F_{n}
    \end{bmatrix} \in \mathsf{GL}_{n+1}(\mathbb{C}), \]
    then 
    \[ S^{-1} = 
    \begin{bmatrix}
        1 & -e^\top/n    \\
        0 & F_{n}^{-1}
    \end{bmatrix}. \]
\end{lemma}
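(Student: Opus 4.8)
The statement is a routine block-matrix inverse identity, so the plan is simply to verify $S S^{-1} = I_{n+1}$ (or equivalently $S^{-1} S = I_{n+1}$) by direct block multiplication, using the fact that $F_n$ is invertible (so $F_n^{-1}$ is well-defined) and that $F_n^{-1} = \frac{1}{n}\overline{F_n}$ as recorded in \eqref{Finv}. Writing $S$ and the candidate inverse $T \coloneqq \begin{bmatrix} 1 & -e^\top/n \\ 0 & F_n^{-1} \end{bmatrix}$ in conformal $1+n$ block form, the product $ST$ has blocks: the $(1,1)$-block is $1 \cdot 1 + e_1^\top \cdot 0 = 1$; the $(2,1)$-block is $0 \cdot 1 + F_n \cdot 0 = 0$; the $(2,2)$-block is $0 \cdot (-e^\top/n) + F_n F_n^{-1} = I_n$; and the only nontrivial computation is the $(1,2)$-block, namely $1 \cdot (-e^\top/n) + e_1^\top F_n^{-1}$.

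**The one computation that needs a sentence.** For the $(1,2)$-block I need $e_1^\top F_n^{-1} = e^\top/n$, i.e., the first row of $F_n^{-1}$ is $\frac{1}{n}e^\top$. This is immediate from $F_n^{-1} = \frac{1}{n}\overline{F_n}$: the first row of $F_n$ is $e^\top$ (all entries $\omega^{0} = 1$), hence the first row of $\overline{F_n}$ is also $e^\top$, so the first row of $F_n^{-1}$ is $\frac{1}{n}e^\top$. Therefore the $(1,2)$-block of $ST$ is $-e^\top/n + e^\top/n = 0^\top$, and $ST = I_{n+1}$. Since $S$ is square and invertible (its determinant is $\det F_n \ne 0$), $T = S^{-1}$.

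**Main obstacle.** There is essentially no obstacle: the only thing to be careful about is getting the block bookkeeping right (which block of $S$ multiplies which block of $T$) and correctly identifying the first row of $F_n$ as $e^\top$ under the convention $f_{ij} = \omega^{(i-1)(j-1)}$ in force in this section. I would present the verification in one short display of the four blocks of $ST$ and conclude.

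\begin{proof}
    Write $S$ and the proposed inverse $T \coloneqq \begin{bmatrix} 1 & -e^\top/n \\ 0 & F_n^{-1} \end{bmatrix}$ in conformal $(1+n)$-block form. Since the first row of $F_n$ is $e^\top$ (every entry being $\omega^0 = 1$), and $F_n^{-1} = \frac{1}{n}\overline{F_n}$ by \eqref{Finv}, the first row of $F_n^{-1}$ is $\frac{1}{n}e^\top$; that is, $e_1^\top F_n^{-1} = e^\top/n$. Therefore
    \[ ST =
    \begin{bmatrix}
        1 \cdot 1 + e_1^\top \cdot 0 & 1 \cdot (-e^\top/n) + e_1^\top F_n^{-1} \\
        0 \cdot 1 + F_n \cdot 0 & 0 \cdot (-e^\top/n) + F_n F_n^{-1}
    \end{bmatrix}
    =
    \begin{bmatrix}
        1 & -e^\top/n + e^\top/n \\
        0 & I_n
    \end{bmatrix}
    = I_{n+1}. \]
    Since $S$ is square with $\det S = \det F_n \ne 0$, it is invertible, and the identity $ST = I_{n+1}$ gives $S^{-1} = T$.
\end{proof}
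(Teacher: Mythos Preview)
Your proof is correct and follows essentially the same route as the paper: both verify the identity $ST = I_{n+1}$ by block multiplication, with the only nontrivial block being the $(1,2)$-entry, handled via $e_1^\top F_n^{-1} = e^\top/n$ from \eqref{Finv}. Your added remark that $\det S = \det F_n \ne 0$ is a harmless extra justification not present in the paper.
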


\begin{proof}
    Notice that 
    \begin{align*}
        \begin{bmatrix}
            1 & e_1^\top \\
            0 & F_{n}
        \end{bmatrix}
        \begin{bmatrix}
            1 & -e^\top/n    \\
            0 & F_{n}^{-1}
        \end{bmatrix} 
        =
        \begin{bmatrix}
            1 & -e^\top/n + e_1^\top F_{n}^{-1}    \\
            0 & I_n
        \end{bmatrix}
    \end{align*}
    and since $F_n^{-1} = \frac{1}{n}\overline{F_n}$, it follows that $e_1^\top F_{n}^{-1} = e^\top/n$ and the result follows.
\end{proof}

%--------------
\begin{theorem}
    \thlabel{cartprodFn}
    If 
        \[ S = 
        \begin{bmatrix}
        1 & e_1^\top \\
        0 & F_{n}
        \end{bmatrix}, \]
    then $\mathcal{P}(S) = [0,1] \times \mathcal{P}(F_{n}) = [0,1] \times \mathcal{P}_r(F_{n})$. Furthermore, $S$ is extremal.
\end{theorem}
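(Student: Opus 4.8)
The plan is to compute $M_x \coloneqq SD_xS^{-1}$ explicitly in block form. Write $x = \begin{bmatrix} x_1 \\ \hat{x} \end{bmatrix}$ with $x_1 \in \mathbb{C}$ and $\hat{x} \in \mathbb{C}^{n}$, so that $D_x = \begin{bmatrix} x_1 & 0 \\ 0 & D_{\hat{x}} \end{bmatrix}$. Using the preceding lemma for $S^{-1}$ together with the identities $e_1^\top D_{\hat{x}} = \hat{x}_1 e_1^\top$ and $e_1^\top F_n^{-1} = e^\top/n$ (the latter already observed in that proof), a direct block multiplication gives
\[ M_x = \begin{bmatrix} x_1 & \frac{\hat{x}_1 - x_1}{n}\, e^\top \\ 0 & F_n D_{\hat{x}} F_n^{-1} \end{bmatrix}. \]
First I would read off $\mathcal{C}(S)$: since this block matrix is upper triangular, $M_x \geqslant 0$ if and only if $x_1$ and $\hat{x}_1 - x_1$ are nonnegative reals and $F_n D_{\hat{x}} F_n^{-1} \geqslant 0$, i.e., $\hat{x} \in \mathcal{C}(F_n)$.

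Next I would impose $M_x e = e$. From the block form, $M_x e = \begin{bmatrix} \hat{x}_1 \\ (F_n D_{\hat{x}} F_n^{-1})e \end{bmatrix}$, so $M_x e = e$ is equivalent to $\hat{x}_1 = 1$ together with $(F_n D_{\hat{x}} F_n^{-1})e = e$, i.e., $\hat{x} \in \mathcal{P}(F_n)$; conversely, if $\hat{x} \in \mathcal{P}(F_n)$ then $F_n D_{\hat{x}} F_n^{-1}$ is stochastic and, since $F_n e_1 = e > 0$, the argument in the proof of \thref{coneandpoly} forces $\hat{x}_1 = \rho(F_n D_{\hat{x}} F_n^{-1}) = 1$. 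Combining this with the inequalities from the first paragraph, the surviving constraint on $x_1$ is exactly $0 \leqslant x_1 \leqslant \hat{x}_1 = 1$. Hence $x \in \mathcal{P}(S)$ if and only if $x_1 \in [0,1]$ and $\hat{x} \in \mathcal{P}(F_n)$, that is, $\mathcal{P}(S) = [0,1] \times \mathcal{P}(F_n)$. The equality $\mathcal{P}(F_n) = \mathcal{P}_r(F_n)$ is then immediate from \thref{dftconetope} (which asserts that $F_n$ is ideal) and \thref{coneandpoly}.

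For extremality, the idea is to exhibit a point of $\mathcal{P}(S)$ distinct from $e$ that is extremal in $\mathbb{SL}_1^{n+1}$. Assume $n \geqslant 2$ (the case $n = 1$ is degenerate, $\mathcal{P}(F_1) = \{e\}$). Since $F_n$ is ideal, $\mathcal{P}(F_n) = \mathcal{P}_r(F_n) = \conv(r_1, \dots, r_n)$, where $r_i$ is the $i$-th row of $F_n$; taking $r_2 = \begin{bmatrix} 1 & \omega & \cdots & \omega^{n-1} \end{bmatrix}^\top$ (so $r_2 \ne e$) and putting $x \coloneqq \begin{bmatrix} 1 \\ r_2 \end{bmatrix}$, the characterization just obtained gives $x \in \mathcal{P}(S)$, and $x \ne e$. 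Moreover $\Lambda(x) = \spec(M_x)$ with $M_x$ stochastic and $x_1 = 1$, so $x \in \mathbb{SL}_1^{n+1}$. Finally, the first entry of $r_2$ is $1$, so for every $\alpha > 1$ the vector $\alpha \pi_1(x)$ has first entry $\alpha$; were $\alpha \pi_1(x) \in \pi_1(\mathbb{SL}_1^{n+1})$, some stochastic $(n+1)$-by-$(n+1)$ matrix would have $\alpha$ in its spectrum, contradicting that the spectral radius of a stochastic matrix is $1$. Thus $\alpha \pi_1(x) \notin \pi_1(\mathbb{SL}_1^{n+1})$, so $x$ is extremal, and therefore $S$ is extremal.

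I expect the main obstacle to be purely the bookkeeping in the block multiplication, together with the point that $\hat{x}_1 = 1$ is forced rather than assumed (handled by importing the spectral-radius argument from \thref{coneandpoly}). I would also flag one subtlety: $M_x$ is always block upper triangular, hence reducible, so $S$ is not literally a Perron similarity; accordingly, ``$S$ is extremal'' must be read in the sense that $\mathcal{P}(S)$ contains an $\mathbb{SL}_1^{n+1}$-extremal point other than $e$, which is exactly what the last paragraph establishes.
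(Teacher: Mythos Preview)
Your proposal is correct and follows essentially the same approach as the paper: the identical block computation of $M_x$, followed by reading off the stochasticity constraints. Your extremality argument is a spelled-out version of the paper's one-liner ``$S$ is extremal because $F_n$ is extremal,'' and your flag that $S$ is not literally a Perron similarity (every $M_x$ here is reducible) is a valid point the paper glosses over.
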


\begin{proof}
    Let $x \in \mathbb{C}^{n+1}$ and let $y = \pi_1(x) \in \mathbb{C}^n$. Since 
    $$e_1^\top D_y F_n^{-1} = \frac{y_1}{n} e^\top,$$ 
    it follows that
    \begin{align*}
        S D_x S^{-1}
        &=
        \begin{bmatrix}
            1 & e_1^\top \\
            0 & F_{n}
        \end{bmatrix}
        \begin{bmatrix}
            x_1 & 0 \\
            0   & D_y
        \end{bmatrix}
        \begin{bmatrix}
            1 & -e^\top/n    \\
            0 & F_{n}^{-1}
        \end{bmatrix} \\
        &=
        \begin{bmatrix}
            x_1 & -\frac{x_1}{n} e^\top + e_1^\top D_y F_n^{-1} \\
            0 & F_n D_y F_n^{-1}
        \end{bmatrix} \\
        &= 
        \begin{bmatrix}
            x_1 & \frac{y_1 - x_1}{n} e^\top \\
            0 & F_n D_y F_n^{-1}
        \end{bmatrix}.
    \end{align*}

    If $x \in \mathcal{P}(S)$, then the matrix above is stochastic. Thus, $x_1 \in [0,1]$ and $y \in \mathcal{P}(F_n) = \mathcal{P}_r(F_n)$, i.e., $x \in [0,1] \times \mathcal{P}_r(F_{n})$.

    If $x \in [0,1] \times \mathcal{P}_r(F_{n})$, then $y \coloneqq \pi_1(x) \in \mathcal{P}_r(F_{n}) = \mathcal{P}(F_{n})$. Since the first column of $F_n e_1 = e$, it follows that $y_1 = 1$ and the matrix 
    $$ SD_x S^{-1} = 
    \begin{bmatrix}
        x_1 & \frac{1 - x_1}{n} e^\top \\
        0 & F_n D_y F_n^{-1}        
    \end{bmatrix}
    $$
    is clearly stochastic, i.e., $x \in \mathcal{P}(S)$. 

    Finally, note that $S$ is extremal because $F_n$ is extremal.
\end{proof}

If $S \in \mathsf{GL}_{4}(\mathbb{C})$ is a Perron similarity, then   
\[ S \sim \begin{bmatrix}
1   & 1           & 1                       & 1                         \\
1   & \lambda_2   & \alpha_2 + \omega_2\ii  & \alpha_2 - \omega_2\ii    \\
1   & \lambda_3   & \alpha_3 + \omega_3\ii  & \alpha_3 - \omega_3\ii    \\
1   & \lambda_4   & \alpha_4 + \omega_4\ii  & \alpha_4 - \omega_4\ii    
\end{bmatrix}. \]
Furthermore, if $S$ is ideal and $x \in \mathcal{P}_r(S)$, then there are nonnegative scalars $\gamma_1$, $\gamma_2$, $\gamma_3$, and $\gamma_4$ such that $\sum_{i=1}^4 \gamma_i = 1$ and 
\begin{align*}
x 
&= 
\begin{bmatrix}
1 & 
\gamma_1 + \sum_{i=2}^4 \gamma_i \lambda_i                              & 
\gamma_1 + \sum_{i=2}^4 \gamma_i\left(\alpha_i + \omega_i\ii \right)    & 
\gamma_1 + \sum_{i=2}^4 \gamma_i\left(\alpha_4 - \omega_4\ii \right)   
\end{bmatrix}                                                                   \\
&= 
\begin{bmatrix}
1 & 
\gamma_1 + \sum_{i=2}^4 \gamma_i \lambda_i                                      & 
\gamma_1 + \sum_{i=2}^4 \gamma_i\alpha_i + \sum_{i=2}^4 \gamma_i \omega_i\ii    & 
\gamma_1 + \sum_{i=2}^4 \gamma_i\alpha_i - \sum_{i=2}^4 \gamma_i\omega_i\ii     
\end{bmatrix}.  
\end{align*} 
Consequently, 
\[ 
\begin{bmatrix}
\gamma_1 + \sum_{i=2}^4 \gamma_i \lambda_i  & 
\gamma_1 + \sum_{i=2}^4 \gamma_i\alpha_i    & 
\sum_{i=2}^4 \gamma_i \omega_i 
\end{bmatrix} \in \mathbb{B}
\]
and 
\[ 
\begin{bmatrix}
\gamma_1 + \sum_{i=2}^4 \gamma_i \lambda_i  & 
\gamma_1 + \sum_{i=2}^4 \gamma_i\alpha_i    & 
-\sum_{i=2}^4 \gamma_i \omega_i 
\end{bmatrix} \in \mathbb{B}.
\]

When $n=4$, the K-arcs in the upper-half region are:
\begin{itemize}
    \item $K_4(0,1)$ (Type 0);
    \item $K_4(1/4, 1/3)$ (Type I); and
    \item $K_4(1/3,1/2)$ (Type II).
\end{itemize}
However, it is known that $K_4(1/3,1/2) = \overline{K_4^2(1/4, 1/3)}$ \cite[Remark 5.3]{mnp2024}. As mentioned earlier, the Type 0 arc is subsumed in the Type I arc. Thus, it suffices to consider Perron similarities of realizing matrices corresponding to the arc $K_4(1/4, 1/3)$. Figure \ref{fig:dft4} depicts the projected spectratope corresponding to $F_4$.
    \begin{figure}[H]
        \centering
            \includegraphics[width=0.5\linewidth]{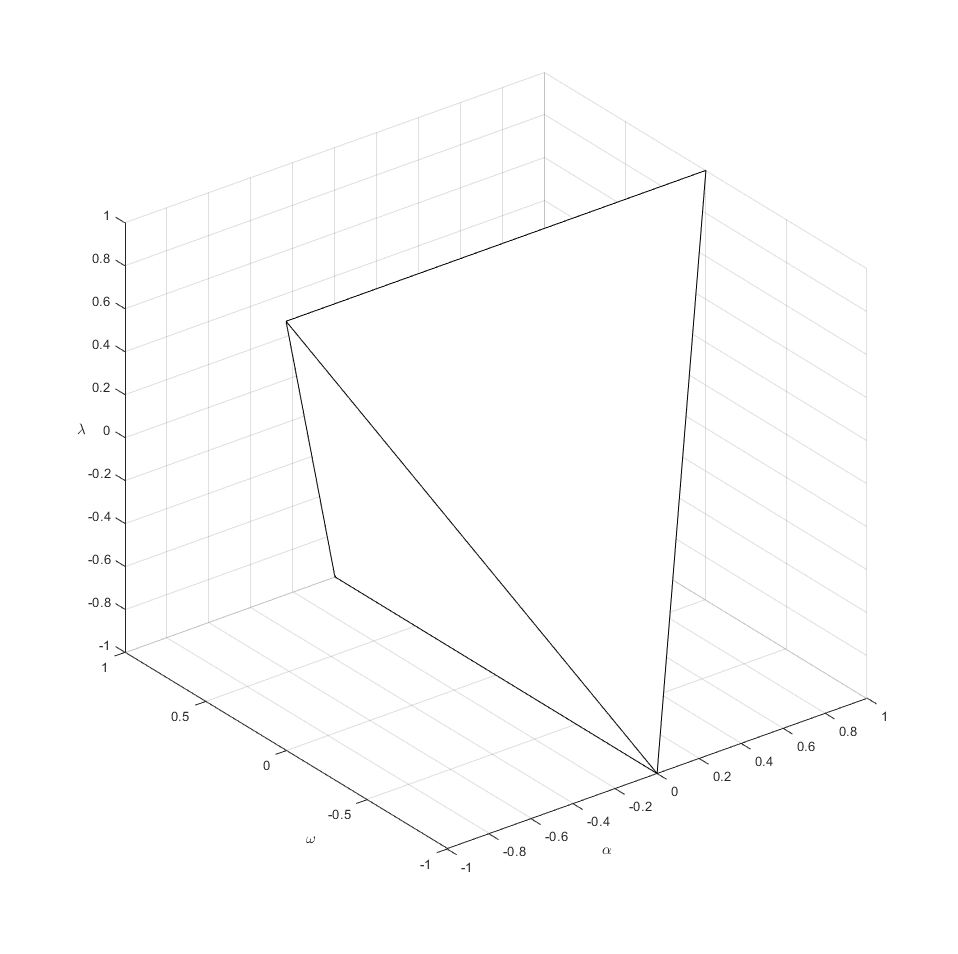}
            \caption{Projected spectratope of $F_4$.}
            \label{fig:dft4}
    \end{figure}
Figure \ref{fig:sub1} contains spectra derived from the projected spectratopes of these Peron similarities. Notice that Figure \ref{fig:sub2} matches the Karpelevich region when $n=4$.

    \begin{figure}[H]
        \centering
            \includegraphics[width=.5\textwidth]{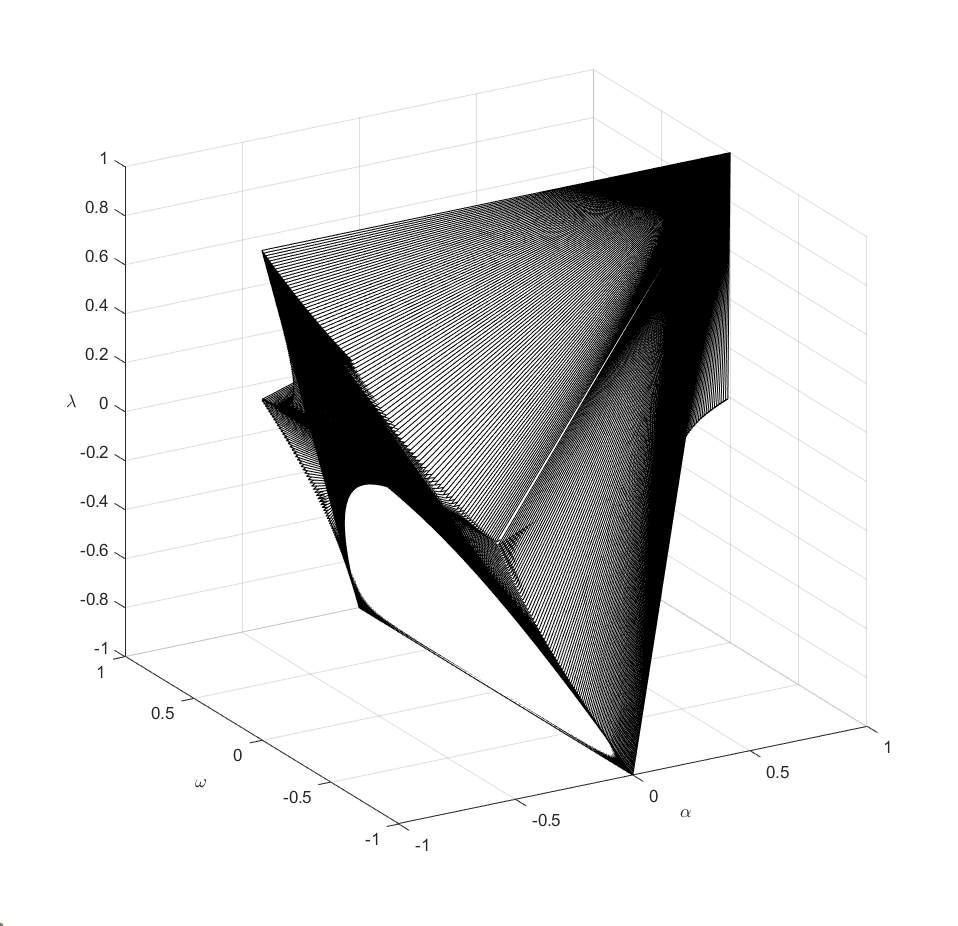}
            \caption{Projected spectratopes of Peron similarities arising from $K_4(0,1)$ and $K_4(1/4, 1/3)$.}
            \label{fig:sub1}
    \end{figure}
        
    \begin{figure}[H]
        \centering
            \includegraphics[scale=.250]{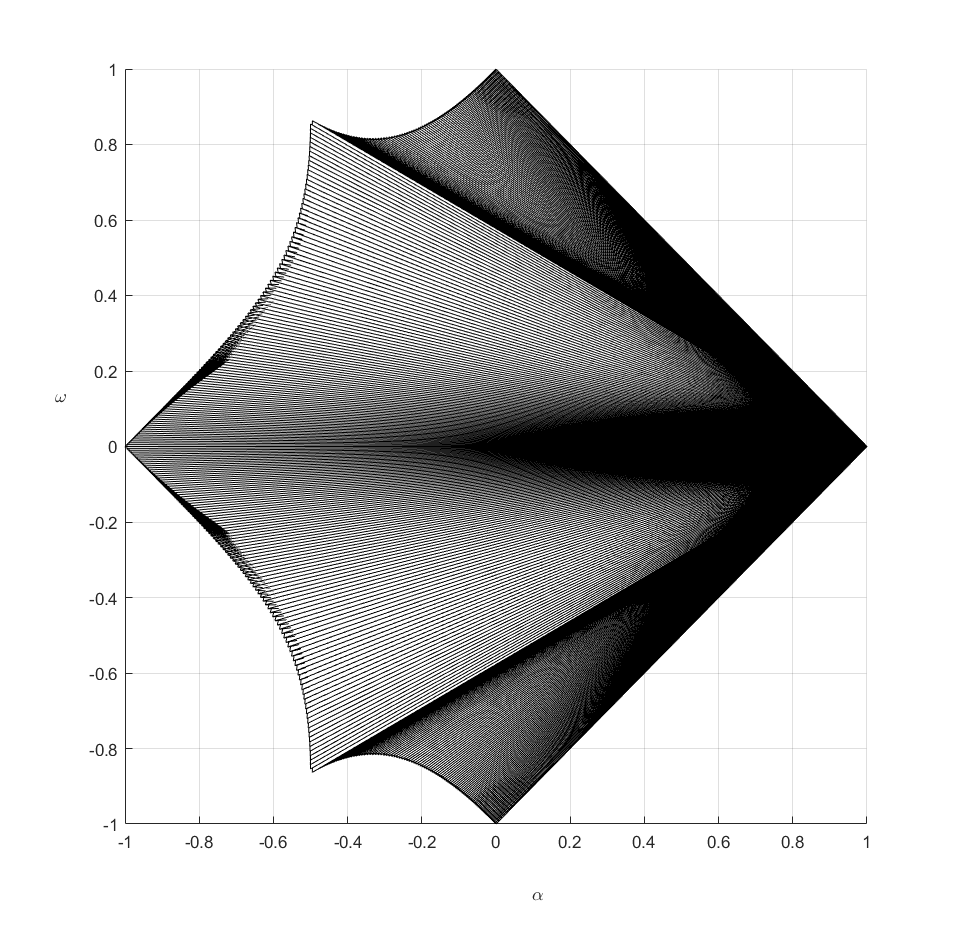}
            \caption{The $\alpha\omega$-view of Figure \ref{fig:sub1} matching $\Theta_4$.}
            \label{fig:sub2} 
    \end{figure}

If 
$S = 
\begin{bmatrix}
    1 & e_1^\top \\
    0 & F_3
\end{bmatrix} \in \mathsf{GL}_4(\mathbb{C}),$
then $\mathcal{P}(S) = [0,1] \times \mathcal{P}_r(F_3)$ by \thref{cartprodFn}. Figure \ref{fig:specfinal} adds the projected spectratope of $S$.
\begin{figure}[H]
    \includegraphics[width = .5\textwidth]{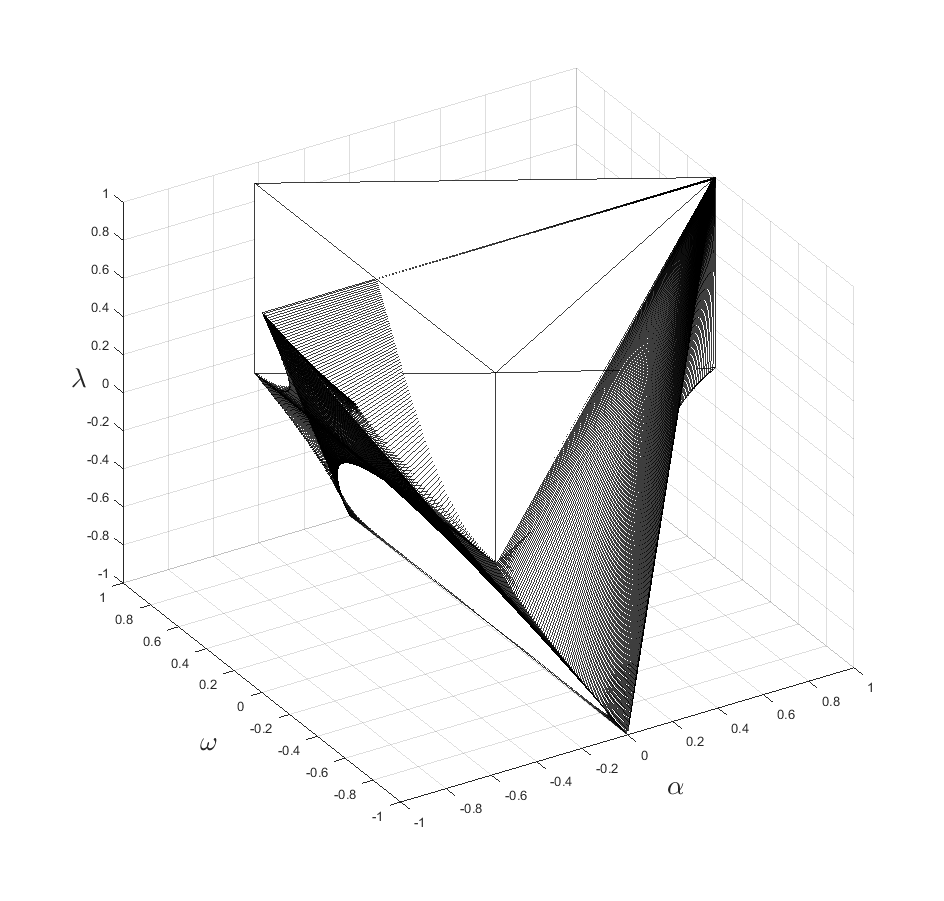}
    \caption{Geometric representation of the spectra of $4$-by-$4$ matrices via spectratopes.}\label{fig:specfinal}
\end{figure} 

Notice that the missing region, which is small relative to the entire region, contains spectra such that $-1 \leqslant \lambda \leqslant 0$, $0 \leqslant \alpha \leqslant 1$, and $-1 \leqslant \omega \leqslant 0$. 

%-----------------------------------------
\section{Implications for Further Inquiry}

Theorems \ref{hadamardcones} and \ref{CSpolyconePSpolytope} demonstrate that $\mathcal{C}(S)$ ($\mathcal{P}(S)$) is a polyhedral cone (polytope) that is closed with respect to the Hadamard product. As such we pose the following. 

%---------------
\begin{question}
    If $K$ is a polyhedral cone (polytope) that is closed with respect to the Hadamard product, is there an invertible matrix $S$ such that $K = \mathcal{C}(S)$ ($K = \mathcal{P}(S)$)?
\end{question}

The following conjecture, which fails when $n = 2$ and $n = 3$, would demonstrate that characterizing the extreme points is enough to characterize $\mathbb{SL}_1^n$. 

%-----------------
\begin{conjecture}
    If $n > 3$, then $\partial \mathbb{SL}_1^n \subseteq \mathbb{E}^n$, i.e., points on the boundary are extremal.
\end{conjecture}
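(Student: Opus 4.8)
\emph{Proof proposal.}
The natural arena for the statement is the compact set $R\coloneqq\pi_1(\mathbb{SL}_1^n)$, which by the preceding theorems is star-shaped at the origin; the plan is to work with its radial function $\rho(u)\coloneqq\max\{t\geqslant 0:tu\in R\}$, $u$ ranging over unit directions, the maximum being attained since $R$ is closed and bounded. Unravelling the definitions, a point $x\in\mathbb{SL}_1^n$ with $\pi_1(x)\neq 0$ is extremal exactly when $\pi_1(x)=\rho(u)u$ lies on the radial envelope $\{\rho(u)u:\|u\|=1\}$ of $R$; this envelope is always contained in $\partial R$ (a point $\rho(u)u$ cannot be interior, since moving a little further along $u$ leaves $R$), so the conjecture is equivalent to the reverse inclusion, i.e.\ to the lower semicontinuity of $\rho$ (upper semicontinuity being automatic from closedness and star-shapedness). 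Equivalently: no point $tu\in R$ with $t<\rho(u)$ should lie on $\partial R$.

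Accordingly, suppose $x\in\partial\mathbb{SL}_1^n$ is not extremal; we may assume $\pi_1(x)\neq 0$, since $0$ is an interior point of $R$ (it is realized by $\frac{1}{n}ee^\top$, and a whole neighbourhood of $\{1,0,\dots,0\}$ in the space of self-conjugate spectra is realizable). Write $\pi_1(x)=ru$ with $\|u\|=1$; non-extremality gives $\alpha ru\in R$ for some $\alpha>1$, and star-shapedness at $0$ together with compactness forces $R$ to meet the ray $\mathbb{R}_{\geqslant 0}u$ in the segment $\{tu:0\leqslant t\leqslant T\}$ with $T\geqslant\alpha r>r$. The goal is to exhibit one $t\in(r,T)$ for which $tu$ is a topological interior point of $R$: then, scaling an interior neighbourhood of $tu$ by $r/t\in(0,1)$ and using $[0,z]\subseteq R$ for each $z\in R$, one gets $ru=\pi_1(x)$ interior, contradicting $x\in\partial\mathbb{SL}_1^n$. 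To find such a $t$, choose it in $(r,T)$ with the entries of $tu$ pairwise distinct and distinct from $1$ — possible for all but finitely many $t$ provided the entries of $u$ themselves are pairwise distinct — and let $A$ be any stochastic realizer of $\{1\}\cup\Lambda(tu)$, so $A$ has simple spectrum. On the real manifold of row-stochastic $n\times n$ matrices the map $\Phi$ sending a matrix to its non-Perron spectrum is smooth near $A$, and I would show $d\Phi_A$ is surjective using Brauer's theorem (\thref{Brauer}): if $\mu\neq 1$ is an eigenvalue of $A$ with right eigenvector $v$ (then $v\notin\linearspan(e)$, else $\mu=1$) and $g^\top e=0$, then $A+vg^\top$ is again row-stochastic, is nonnegative for $\|g\|$ small once $A>0$ (which a preliminary positive perturbation arranges), and has spectrum that of $A$ with $\mu$ replaced by $\mu+g^\top v$; as $g$ runs over $\{g:g^\top e=0\}$ the number $g^\top v$ runs over all of $\mathbb{C}$, and doing this simultaneously on conjugate pairs of eigenvalues (to keep $A$ real and its spectrum self-conjugate) makes $d\Phi_A$ surject onto the tangent space of the parameter space. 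Hence $\Phi$ is open at $A$, $tu\in\interior R$, and this case closes.

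Two points need reinforcement, and the hard part lives there. First, the degenerate stratum: if the entries of $u$ are not pairwise distinct — e.g.\ along the slice $\omega=0$ of the four-by-four parametrization $\{1,\lambda,\alpha\pm\omega\ii\}$, where a real eigenvalue has multiplicity $\geqslant 2$ — then \emph{every} realizer of $tu$ has a repeated eigenvalue, the ordered-spectrum map acquires a square-root singularity, and $d\Phi_A$ need not surject; here one must instead approach $tu$ through nearby directions with distinct entries (using continuity of eigenvalues and a perturbation that both splits the repeated eigenvalue and preserves stochasticity) or treat such strata separately. Second, and this I expect to be the genuine obstacle, nothing so far uses $n>3$ — yet the statement provably fails for $n\leqslant 3$: in $\Theta_3$, for instance, every $\mu$ in the real segment $(-1,-\frac12)$ is a non-extremal boundary point (one has $\alpha\mu\in\Theta_3$ for a suitable $\alpha>1$, while the trace and self-conjugacy constraints exclude any complex neighbourhood of $\mu$ from $\Theta_3$), and the analogous thin appendages and inward corners of $\mathbb{SL}_1^2$ and $\mathbb{SL}_1^3$ are exactly what must be shown to vanish once an extra free eigenvalue appears. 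I would attack this using the Type~I Karpelevi\v{c} realizing matrices of Section~\ref{type1}, which are nonnegative irreducible companion matrices whose characteristic polynomials have pairwise distinct roots for all but finitely many parameter values (Proposition~\ref{distincteigs}) and hence yield ideal, extremal Vandermonde Perron similarities, together with the DFT and Walsh Kronecker spectratopes: the aim would be to prove that for $n>3$ the union of these explicitly known extremal spectratopes has the same topological boundary as $R$, so that extremality propagates to all of $\partial R$. Making that covering statement precise is the crux I expect to get stuck on, and is presumably why the result is posed only as a conjecture.
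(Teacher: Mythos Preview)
The statement is posed in the paper as an open \emph{Conjecture} (in the section ``Implications for Further Inquiry''), not as a theorem; the paper supplies no proof, so there is nothing to compare your attempt against. You yourself recognise this at the end of your write-up, correctly flagging that the argument is incomplete and that this ``is presumably why the result is posed only as a conjecture.''

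Your outline is a reasonable plan of attack. The reformulation via the radial function of the star-shaped set $R=\pi_1(\mathbb{SL}_1^n)$ is clean, and the observation that an interior point $tu$ on the ray with $t>r$ would, by star-shapedness at $0$, force $ru$ to be interior is correct. The Brauer-based surjectivity argument for $d\Phi_A$ at a positive stochastic $A$ with simple spectrum is essentially sound, with the caveat that the double rank-one perturbation $vg^\top+\bar v\bar g^\top$ must be handled at the level of the differential and not nonlinearly, since $\bar v$ is an eigenvector of $A$ but generally not of $A+vg^\top$. However, the two gaps you name are genuine and unresolved: the degenerate-direction case (entries of $u$ not pairwise distinct) is not a technicality one can simply perturb away, since for such $u$ the entire ray may sit in $\partial R$; and, more seriously, nothing in the main argument distinguishes $n>3$ from $n\leqslant 3$, where the statement fails. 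Your proposed rescue via a covering of $\partial R$ by the explicit Type~I and DFT/Walsh spectratopes is speculative and would itself be a substantial theorem. In short, this is an honest sketch of a plausible line of attack on an open problem, not a proof, and you have diagnosed its weaknesses accurately.
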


%    Bibliographies can be prepared with BibTeX using amsplain,
%    amsalpha, or (for "historical" overviews) natbib style.
\bibliographystyle{amsplain}
\bibliography{master}
%    Insert the bibliography data here.

\end{document}